\newtheorem{theorem}{Theorem}[section]
\newtheorem{lemma}[theorem]{Lemma}
\newtheorem{proposition}[theorem]{Proposition}
\newtheorem{remark}[theorem]{Remark}
\newtheorem{definition}[theorem]{Definition}
\newtheorem{example}[theorem]{Example}
\newcommand{\noi}{\noindent}
\newcommand{\T}{\mbox{\sf true}}
\newcommand{\F}{\mbox{\sf false}} 
\newcommand{\AND}{\mbox{$\land$}} 
\newcommand{\OR}{\mbox{$\lor$}} 
\newcommand{\IMP}{\mbox{$\to$}} 
\newcommand{\NOT}{\mbox{$\lnot$}} 
\newcommand{\BOT}{\mbox{$\bot$}}
\newcommand{\GA}{{\mit \Gamma}}
\newcommand{\DE}{{\mit \Delta}}
\newcommand{\SI}{{\mit \Sigma}}
\newcommand{\PI}{{\mit \Pi}}
\newcommand{\PH}{{\mit \Phi}}
\newcommand{\PS}{{\mit \Psi}}
\newcommand{\R}{\prec}
\newcommand{\RI}{\succ}
\newcommand{\SEQ}[2]{\mbox{${#1}\Rightarrow{#2}$}}
\newcommand{\SSEQ}[2]{\mbox{${#1}\Rrightarrow{#2}$}}
\newcommand{\Darrow}{\Rightarrow\!\!\!\!\Rightarrow}
\newcommand{\DSEQ}[2]{\mbox{${#1}\Darrow{#2}$}}
\newcommand{\XSEQ}[2]{\mbox{${#1}\blacktriangleright{#2}$}}
\newcommand{\HGL}{\ensuremath{{\bf GL}_{\rm H}}}
\newcommand{\HS}{\ensuremath{{\bf S}_{\rm H}}}
\newcommand{\HD}{\ensuremath{{\bf D}_{\rm H}}}
\newcommand{\HDa}{\ensuremath{{\bf D}_{\rm H}^{2}}}
\newcommand{\HDb}{\ensuremath{{\bf D}_{\rm H}^{3}}}
\newcommand{\HDi}{\ensuremath{{\bf D}_{\rm H}^{i}}}
\newcommand{\HDL}[1]{\ensuremath{{\bf D}_{\rm H}[#1]}}
\newcommand{\HDaL}[1]{\ensuremath{{\bf D}_{\rm H}^{2}[#1]}}
\newcommand{\HDbL}[1]{\ensuremath{{\bf D}_{\rm H}^{3}[#1]}}
\newcommand{\LK}{\ensuremath{{\bf LK}_{\Rightarrow}}}
\newcommand{\LKS}{\ensuremath{{\bf LK}_{\Rrightarrow}}}
\newcommand{\LKD}{\ensuremath{{\bf LK}_{\Darrow}}}
\newcommand{\sGL}{\ensuremath{{\bf GL}_{\rm seq}}}
\newcommand{\sS}{\ensuremath{{\bf S}_{\rm seq}}}
\newcommand{\sDa}{\ensuremath{{\bf D}_{\rm seq}^{2}}}
\newcommand{\sDb}{\ensuremath{{\bf D}_{\rm seq}^{3}}}
\newcommand{\sDi}{\ensuremath{{\bf D}_{\rm seq}^{i}}}
\newcommand{\Kushida}{\ensuremath{{\bf S}_{\rm Kushida}}}
\newcommand{\GLLINF}{\ensuremath{{\bf LinGL}^{\rm F}}}
\newcommand{\GLLIN}{\ensuremath{{\bf LinGL}}}
\newcommand{\DLIN}{\ensuremath{\bf LinD}}
\newcommand{\DLINO}[1]{\ensuremath {{\bf LinD}^{#1}}}
\newcommand{\HGLLIN}{\ensuremath{{\bf LinGL}_{\rm H}}}
\newcommand{\RuleGL}{\ensuremath{\text{GL$\Box$}}}
\newcommand{\RuleS}{\ensuremath{\text{GLtoS}}}
\newcommand{\RuleLeftBox}{\ensuremath{\text{S$\Box$left}}}
\newcommand{\RuleDa}{\ensuremath{\text{GLtoD}}}
\newcommand{\RuleDb}{\ensuremath{\text{StoD}}}
\newcommand{\KushidaGLL}{\ensuremath{\text{Kushida-GLtoS$\Box$left}}}
\newcommand{\KushidaGLR}{\ensuremath{\text{Kushida-GL$\Box$}}}
\newcommand{\KushidaSR}{\ensuremath{\text{Kushida-GLtoS$\Box$}}}
\newcommand{\SF}[1]{\mbox{\rm Sub}(#1)}
\newcommand{\Kmodel}[3]{\langle #1, #2, #3\rangle}
\newcommand{\Kframe}[2]{\langle #1, #2\rangle}
\newcommand{\PV}{{\bf PropVar}}
\newcommand{\DEF}{\stackrel{\text{def}}{\Longleftrightarrow}}
\newcommand{\FORMULA}{{\bf Form}}
\newcommand{\MP}[1]{\ensuremath{{\rm{MP}}[#1]}}
\newcommand{\SL}[1]{\ensuremath{{\bf S}[#1]}}
\newcommand{\DL}[1]{\ensuremath{{\bf D}[#1]}}
\newcommand{\DaL}[1]{\ensuremath{{\bf D}^{2}[#1]}}
\newcommand{\DbL}[1]{\ensuremath{{\bf D}^{3}[#1]}}
\begin{document}

\title{Cut-free sequent calculi for the provability logic D}
\author{
Ryo Kashima\thanks{
Department of Mathematical and Computing Science, 
Institute of Science Tokyo, Japan.
{\tt kashima@is.titech.ac.jp}
}
\and Taishi Kurahashi\thanks{
Graduate School of System Informatics, Kobe University, Japan.
{\tt kurahashi@people.kobe-u.ac.jp}
}
\and Sohei Iwata\thanks{
Division of Liberal Arts and Sciences, Aichi-Gakuin University, Japan.
{\tt siwata@dpc.agu.ac.jp}
}
\and So Morioka\thanks{
{\tt soubaseoct@icloud.com}
}
}
\date{\today}

\maketitle

\abstract{
We say that a Kripke model is a GL-model
if the accessibility relation $\prec$ 
is transitive and converse well-founded.
We say that a Kripke model is a D-model
if it is obtained by
attaching infinitely many worlds 
$t_1, t_2, \ldots$, and $t_\omega$
to a world $t_0$ of a GL-model
so that 
$t_0 \succ t_1 \succ t_2 \succ \cdots \succ t_\omega$.
A non-normal modal logic {\bf D},
which was studied by Beklemishev \cite{Beklemishev99},
is characterized as follows.
A formula $\varphi$ is a theorem of {\bf D}
if and only if 
$\varphi$ is true at $t_\omega$ in any D-model.
{\bf D} is an intermediate logic between
the provability logics {\bf GL} and {\bf S}.
A Hilbert-style proof system for {\bf D} is known,
but there has been no sequent calculus.
In this paper, we establish two sequent calculi for {\bf D},
and show the cut-elimination theorem.
We also introduce new Hilbert-style systems for {\bf D}
by interpreting the sequent calculi.
Moreover,
we show that 
D-models can be defined 
using an arbitrary limit ordinal as well as $\omega$.
Finally,
we show a general result as follows.
Let $X$ and $X^+$ be arbitrary modal logics.
If the relationship between semantics of $X$ and semantics of $X^+$
is equal to that of {\bf GL} and {\bf D},
then $X^+$ can be axiomatized based on $X$
in the same way as the new axiomatization of {\bf D} based on {\bf GL}.
\\
\\
{\bf Keywords:} 
provability logic D, 
modal logic, 
cut-elimination, Kripke model 
}


\section{Introduction}
\label{sec:intro}

We say that a Kripke model is a {\em GL-model}
if the accessibility relation $\prec$ 
is transitive and converse well-founded.
Then, a {\em D-model} 
is obtained by
attaching infinitely many worlds 
$t_1, t_2, \ldots$ (called `tail'), and $t_\omega$ (called `bottom')
to a world $t_0$ of a GL-model
so that 
$t_0 \succ t_1 \succ t_2 \succ \cdots \succ t_\omega$
(Figure~\ref{Fig:D-model}, Left).
A non-normal modal logic {\bf D},
which was studied by Beklemishev \cite{Beklemishev99},
is characterized as follows.
A formula $\varphi$ is a theorem of {\bf D}
if and only if 
$\varphi$ is true at the bottom of any D-model.\footnote{
Following \cite{Beklemishev99}, we use the name {\bf D},
while {\bf D} denotes the modal logic of serial frames in most literature.
}

\begin{figure}[h]

\begin{minipage}{7cm}
\begin{tikzpicture} [x=1mm,y=1mm]

\draw(0,9) ellipse (14 and 6.5);

\node at (0,5) {$\bullet$};
\node at (3, 4) {$t_0$};
\draw [->, thick] (-1,6)--(-3,8);
\draw [->, thick] (0,6)--(0,9);
\draw [->, thick] (1,6)--(3,8);

\draw [->, thick] (0,2)--(0,4);

\node at (0, 1) {$\bullet$};
\node at (3, 0) {$t_1$};
\draw [->,thick] (0,-2)--(0,0);

\node at (0,-3) {$\bullet$};
\node at (3,-4) {$t_2$};
\draw [->,thick] (0,-6)--(0,-4);

\node at (0,-7) {$\cdot$};
\node at (0,-8) {$\cdot$};
\node at (0,-9) {$\cdot$};

\node at (0,-11) {$\bullet$};
\node at (3,-12) {$t_\omega$};

\node [right] at (5,14) {\colorbox{white}{GL-model}};
\node [right] at (5,-3) {\colorbox{white}{tail}};
\node [right] at (4,-11) {\colorbox{white}{bottom}};
\draw [thick,dotted] (4,1) to [out=330, in=30] (4,-9);

\node at (-40,0) {};
\node at (3,-23) {};
\end{tikzpicture} 
\end{minipage}
\begin{minipage}{7cm}
\begin{tikzpicture} [x=1mm,y=1mm]

\draw(0,9) ellipse (14 and 6.5);

\node at (0,5) {$\bullet$};
\node at (3, 4) {$t_0$};
\draw [->, thick] (-1,6)--(-3,8);
\draw [->, thick] (0,6)--(0,9);
\draw [->, thick] (1,6)--(3,8);

\draw [->, thick] (0,2)--(0,4);

\node at (0, 1) {$\bullet$};
\node at (3, 0) {$t_1$};
\draw [->,thick] (0,-2)--(0,0);

\node at (0,-3) {$\bullet$};
\node at (3,-4) {$t_2$};
\draw [->,thick] (0,-6)--(0,-4);

\node at (0,-7) {$\cdot$};
\node at (0,-8) {$\cdot$};
\node at (0,-9) {$\cdot$};

\node at (0,-11) {$\bullet$};
\node at (3,-12) {$t_\omega$};

\draw [->,thick] (0,-14)--(0,-12);
\node at (0,-15) {$\bullet$};
\node at (5,-16) {$t_{\omega+1}$};

\draw [->,thick] (0,-18)--(0,-16);
\node at (0,-19) {$\cdot$};
\node at (0,-20) {$\cdot$};
\node at (0,-21) {$\cdot$};

\node at (0,-23) {$\bullet$};
\node at (3,-23) {$t_{\lambda}$};

\node [right] at (5,14) {\colorbox{white}{GL-model}};
\node [right] at (9,-9) {\colorbox{white}{tail}};
\node [right] at (4,-23) {\colorbox{white}{bottom}};
\draw [thick,dotted] (5,1) to [out=330, in=30] (5,-20);

\node at (-40,0) {};
\end{tikzpicture} 
\end{minipage}
\caption{(Left) D-model by Beklemishev. (Right) New D-model.}
\label{Fig:D-model}
\end{figure}

{\bf D} is a provability logic as follows.
A formula $\varphi$ is a theorem of {\bf D}
if and only if 
any $\varphi^*$ is true in the standard model of arithmetic 
where $\varphi^*$ is obtained from $\varphi$
by interpreting the modal operator $\Box$ as 
the provability predicate of
a c.e.~extension of Peano Arithmetic
that is $\Sigma_1$-sound but not sound.
In this paper, we do not argue {\bf D} from the perspective of provability logics,
but we consider {\bf D} as an interesting modal logic
which is non-normal (not closed under the necessitation rule)
and has simple Kripke-style semantics.
We establish sequent calculi, cut-elimination, and new Hilbert-style axiomatizations for {\bf D};
furthermore, we show that 
we can define D-models
using an arbitrary limit ordinal $\lambda$ as well as $\omega$
(Figure~\ref{Fig:D-model}, Right).

{\bf D} is closely related to the well-known provability logics 
{\bf GL} and {\bf S}
(see, e.g., \cite{Boolos, Solovay} for the basic results on 
{\bf GL} and {\bf S})\footnote{
Following \cite{Beklemishev99, CZ, Visser84}, we use the name {\bf S},
while it is called {\bf GLS} in \cite{Boolos, KK, Kushida},
${\rm G}^*$ in \cite{Boolos80},
and $G'$ in \cite{Solovay}.
}.
A Hilbert-style proof system for {\bf GL} is known
as ${\bf K} + \Box(\Box\varphi \IMP \varphi) \IMP \Box\varphi$.
A Hilbert-style proof system for {\bf S} (we call this system {\HS})
is as follows.
The axioms are all theorems of {\bf GL} and all formulas
$\Box\varphi \IMP \varphi$;
and sole inference rule is modus ponens.
Then a Hilbert-style proof system for {\bf D} (we call this system {\HD})
is known to be 
obtained from {\HS}
by restricting $\varphi$ in the axiom scheme $\Box\varphi \IMP \varphi$
to be $\BOT$ or $\Box\psi \OR \Box\sigma$
(see \cite{Beklemishev99}).
Therefore 
{\bf D} is an intermediate logic between {\bf GL} and {\bf S}. 

As for sequent calculi,
{\bf GL} has been  well studied 
(see, e.g., \cite{Gore} and its references), 
and 
{\bf S} has been studied in \cite{Beklemishev87, KK, Kushida}.
But there has been no sequent calculi for {\bf D}.
The sequent calculus for {\bf S} in \cite{Beklemishev87, KK, Kushida} 
was inspired by the Hilbert-style system {\HS};
so one may try to make a sequent calculus for {\bf D}
based on the system {\HD}.
However, this attempt does not seem to work well
because the axiom 
$\Box(\Box\psi \OR \Box\sigma) \IMP (\Box\psi \OR \Box\sigma)$
does not seem to be translatable into a rule of sequent calculi.

In this paper, we establish sequent calculi for {\bf D}
so that the completeness with respect to D-models naturally holds.
A key idea of our calculi is the use of three kinds of sequents.
While Kushida \cite{Kushida} used two kinds of sequents
to make the calculus for {\bf S},
we add one more kind.
We call the three kinds 
`GL-sequents',
`S-sequents',
and 
`D-sequents';
these respectively correspond to
the truth at the GL-model,
at the tail, and at the bottom
in Figure~\ref{Fig:D-model}.
Moreover, 
as the names suggest,
these correspond to the provability of 
{\bf GL}, {\bf S}, and {\bf D}, 
respectively.

Strictly speaking, we give two sequent calculi, called {\sDa} and {\sDb}.
The latter calculus {\sDb} is cut-eliminable,
and the former calculus {\sDa} is not fully cut-eliminable but has 
the subformula property (we call this property `analytic').
We show semantic cut-elimination for both calculi
(i.e., completeness of cut-free {\sDb} 
and analytic {\sDa})
and syntactic cut-elimination for {\sDb}.
These semantic arguments are extensions of that for {\bf S} in \cite{KK},
and the syntactic cut-elimination is reduced to that for {\bf S} by \cite{Kushida}.

\begin{remark}
\textup{
A proof in {\sDb} has three layers with three kinds of sequents,
and the syntactic cut-elimination for the bottom layer 
(with D-sequents) is reduced to that for the middle layer 
(with S-sequents).
Similar arguments --- a proof has layered structure
and the cut-elimination for the lower layer
is reduced to that for the upper layer ---
are found in \cite{Lang}.
}
\end{remark}

Then, two new Hilbert-style proof systems
(we call these {\HDa} and {\HDb})
for {\bf D} are  obtained 
by interpreting the sequent calculi {\sDa} and {\sDb} respectively.
Here, not only the existing system {\HD} but also the new systems {\HDa} and {\HDb}
have 
`all theorems of {\bf GL}' 
as their axioms;
so it is natural to argue a generalization as follows.
Let $L$ be an arbitrary modal logic,
and let {\HDL{L}}, {\HDaL{L}}, and {\HDbL{L}} be the 
Hilbert-style systems 
obtained from 
{\HD}, {\HDa}, and {\HDb}, respectively,
by replacing the axioms `theorems of {\bf GL}'
with `theorems of $L$'.
On the other hand,
let ${\cal F}$ be a class of Kripke frames, 
and let `${\bf D}[{\cal F}]$-model' denote
any Kripke model described as Figure~\ref{Fig:D-model}
in which `GL-model' is replaced with `${\cal F}$-model'.
Then we show the following.
\begin{quote}
If $L$ is characterized by ${\cal F}$
and if a certain condition is satisfied,
then the following conditions are equivalent.
(1) $\varphi$ is true at the bottom of any ${\bf D}[{\cal F}]$-model.
(2) $\varphi$ is a theorem of $\HDaL{L}$.
(3) $\varphi$ is a theorem of $\HDbL{L}$.
\end{quote}
This statement is just the completeness theorem of {\HDa} and {\HDb}
if $L = {\bf GL}$
and ${\cal F}$ is the class of transitive and converse well-founded frames.
It seems that 
the condition 
`$\varphi$ is a theorem of $\HDL{L}$'
is not equivalent to the above three conditions.
This fact shows that 
the new proof systems ---{\HDa}, {\HDb}, and the two sequent calculi---
well reflect the essence of the modal logic {\bf D},
and that
the new systems are more natural than the existing system {\HD}.

The structure of this paper is as follows.
In Section 2,
we present results that are known or will be shown in later sections.
In Section 3, 
we recall the sequent calculi for {\bf GL} and {\bf S},
and we introduce two sequent calculi {\sDa} and {\sDb}.
In Section 4, we give syntactic arguments on the sequent calculi.
In Section 5, we show the semantic cut-elimination.
In Section 6, we introduce 
Hilbert-style systems {\HDa} and {\HDb}.
In Section 7, we show the general result 
on $\HDaL{L}$ and $\HDbL{L}$.


\section{Preliminaries and results}

Formulas are constructed from 
propositional variables, propositional constant $\BOT$, 
logical operator $\IMP$,
and modal operator $\Box$.
The other operators are defined as abbreviations as usual.
The letters 
$p, q, \ldots$ denote propositional variables
and 
the letters 
$\varphi, \psi, \ldots$ denote formulas.
The set of propositional variables is called {\PV}
and the set of formulas is called {\FORMULA}.
Parentheses are omitted as, for example, 
$\Box\varphi \AND \psi \IMP \Box\pi \OR \sigma = 
((\Box\varphi) \AND \psi) \IMP ((\Box\pi) \OR \sigma)$.

Hilbert-style proof systems {\HGL}, {\HS}, and {\HD}
are known as follows, 
where the subscript {H} denotes `Hilbert-style'.
\begin{quote}
Axioms of {\HGL}: 
Tautologies, 
$\Box(\varphi \IMP \psi) \IMP (\Box\varphi \IMP \Box\psi)$,
and 
$\Box(\Box\varphi \IMP \varphi) \IMP \Box\varphi$.
\\
Rules of {\HGL}:
$
\infer[\text{(modus ponens)}]{\psi}{\varphi \IMP \psi  & \varphi}
\text{ and }
\infer[(\Box)]{\Box\varphi}{\varphi}.
$

Axioms of {\HS}: 
Theorems of {\HGL} and 
$\Box\varphi \IMP \varphi$.
\\
Rule of {\HS}:  Modus ponens.

Axioms of {\HD}: Theorems of \HGL, 
$\NOT\Box\BOT$,
and 
$\Box(\Box\varphi \OR \Box\psi) \IMP \Box\varphi \OR \Box\psi$.
\\
Rule of {\HD}: Modus ponens.
\end{quote}
The symbol $\vdash$ denotes  provability as usual.
We have 
$
({\HGL} \vdash \varphi)
 \Longrightarrow
({\HD} \vdash \varphi)
 \Longrightarrow
({\HS} \vdash \varphi)$, 
but the converse does not hold in general;
for example, 
$\Box p \IMP p$ 
is provable in {\HS}
but not provable in {\HD}.
Note that neither {\HD} nor {\HS} has the inference rule ($\Box$).
For example,
we have 
${\HD} \vdash \NOT\Box\BOT$
but
${\HD} \not\vdash \Box\NOT\Box\BOT$.

\medskip

A {\em Kripke model} $\Kmodel{W}{\R}{V}$  consists of 
a non-empty set $W$ of {\em worlds}, 
an {\em accessibility relation} 
${\R} \subseteq W \times W$,
and 
a {\em valuation} $V: W \times {\PV}  \to \{\T, \F\}$.
The domain of $V$ is extended to $W \times {\FORMULA}$ 
by the following.
\begin{quote}
$V(w,\BOT) = \F$.
\\
$V(w, \varphi\IMP\psi) = \T
\Longleftrightarrow
V(w, \varphi) = \F
\mbox{ or }
V(w, \psi) = \T.$
\\
$V(w,\Box\varphi) = \T
\Longleftrightarrow
(\forall w' \RI w)(V(w',\varphi) = \T).$
\end{quote}
$\Kframe{W}{\R}$ is called 
the {\em Kripke frame of} this model,
and $\Kmodel{W}{\R}{V}$ is called 
a Kripke model {\em based on} the frame $\Kframe{W}{\R}$.
We say that a formula $\varphi$ is {\em true at} a world $w$ if
$V(w,\varphi) = \T$.

A Kripke frame $\Kframe{W}{\R}$ is called a {\em GL-frame}
if ${\R}$ is converse well-founded
(i.e.,
there is no infinitely ascending sequence $x_1 \R x_2 \R x_3 \cdots$)
and transitive.
A Kripke model based on a GL-frame is called a {\em GL-model}. 
The completeness of {\HGL} is well-known as below (see, e.g., \cite{Boolos}).
\begin{proposition}[Completeness of {\HGL}]
\label{prop:PreliminaryGL}
The following are equivalent.
\begin{itemize}
\item 
$\HGL \vdash \varphi$.
\item 
$\varphi$ is true at any world of any GL-model.
\end{itemize}
\end{proposition}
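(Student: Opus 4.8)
The plan is to prove the two directions separately. For ``$\HGL\vdash\varphi$ implies $\varphi$ valid'' (soundness), I would argue by induction on the length of an $\HGL$-derivation that every theorem of $\HGL$ is true at every world of every GL-model. Tautologies and the distribution axiom $\Box(\varphi\IMP\psi)\IMP(\Box\varphi\IMP\Box\psi)$ are true at every world of every Kripke model, and modus ponens and the rule $(\Box)$ visibly preserve the property ``true at every world''. The only delicate point is the L\"ob axiom $\Box(\Box\varphi\IMP\varphi)\IMP\Box\varphi$: if some GL-model and world $w$ satisfied $V(w,\Box(\Box\varphi\IMP\varphi))=\T$ but $V(w,\Box\varphi)=\F$, then the set $\{x : w\R x \text{ and } V(x,\varphi)=\F\}$ would be nonempty and hence, by converse well-foundedness, have a $\R$-maximal element $m$; from $V(m,\Box\varphi\IMP\varphi)=\T$ and $V(m,\varphi)=\F$ we get $V(m,\Box\varphi)=\F$, so $V(m',\varphi)=\F$ for some $m'$ with $m\R m'$, and transitivity gives $w\R m'$, contradicting the maximality of $m$.

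For the converse (completeness) I would build a finite canonical model. Assume $\HGL\not\vdash\varphi$, fix a finite set $\Sigma\supseteq\SF{\varphi}$ closed under subformulas and single negation, and let $W$ be the set of $\Sigma$-MCSs, i.e.\ maximal $\HGL$-consistent subsets of $\Sigma$; since $\{\NOT\varphi\}$ is consistent, some $\Gamma_0\in W$ contains $\NOT\varphi$. Define $\Gamma\R\Delta$ to mean (a) for every $\Box\chi\in\Sigma$, $\Box\chi\in\Gamma$ implies $\chi\in\Delta$ and $\Box\chi\in\Delta$, and (b) $\Box\theta\in\Delta\setminus\Gamma$ for some $\Box\theta\in\Sigma$; and set $V(\Gamma,p)=\T$ iff $p\in\Gamma$. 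Clause (a) makes $\R$ transitive and clause (b) makes it irreflexive, so on the finite set $W$ the relation $\R$ is converse well-founded, and hence $\Kmodel{W}{\R}{V}$ is a GL-model. The core is the truth lemma: $V(\Gamma,\psi)=\T$ iff $\psi\in\Gamma$, for all $\psi\in\Sigma$ and $\Gamma\in W$, by induction on $\psi$. The propositional cases are routine from maximal consistency; for $\Box\psi$, the direction $\Box\psi\in\Gamma\Rightarrow V(\Gamma,\Box\psi)=\T$ is immediate from clause (a) and the induction hypothesis, while the converse rests on the existence lemma below. Granting the truth lemma, $V(\Gamma_0,\varphi)=\F$ since $\NOT\varphi\in\Gamma_0$, so $\varphi$ fails in a GL-model; together with soundness this yields the stated equivalence.

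The step I expect to be the real obstacle, and the only place where the L\"ob axiom is genuinely used, is the existence lemma: if $\Box\psi\in\Sigma$, $\Gamma\in W$, and $\Box\psi\notin\Gamma$, then there is $\Delta\in W$ with $\Gamma\R\Delta$ and $\psi\notin\Delta$. I would take $T=\{\chi,\Box\chi : \Box\chi\in\Gamma\}\cup\{\Box\psi,\NOT\psi\}$, show that $T$ is $\HGL$-consistent, and extend the part of it lying in $\Sigma$ to a $\Sigma$-MCS $\Delta$ containing $\NOT\psi$; then $\psi\notin\Delta$, clause (a) holds by construction, and clause (b) holds because $\Box\psi\in\Delta\setminus\Gamma$. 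To see that $T$ is consistent, suppose not; letting $C$ be the conjunction of all $\chi$ with $\Box\chi\in\Gamma$ and using $\Box C\leftrightarrow\bigwedge\{\Box\chi : \Box\chi\in\Gamma\}$, propositional reasoning gives $\HGL\vdash(C\AND\Box C)\IMP(\Box\psi\IMP\psi)$, hence by $(\Box)$ and distribution $\HGL\vdash\Box(C\AND\Box C)\IMP\Box(\Box\psi\IMP\psi)$; applying the $\HGL$-provable formula $\Box C\IMP\Box\Box C$ on the left and the L\"ob axiom on the right yields $\HGL\vdash\Box C\IMP\Box\psi$, and since every $\Box\chi\in\Gamma$ we obtain $\Gamma\vdash_{\HGL}\Box\psi$ and thus $\Box\psi\in\Gamma$ by maximality --- a contradiction. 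The remaining points (Lindenbaum inside the finite $\Sigma$, and the closure conditions on $\Sigma$) are routine. Alternatively, completeness could be extracted from a cut-free sequent calculus for {\bf GL}, but the direct semantic argument above is the most self-contained route here.
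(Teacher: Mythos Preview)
Your argument is correct and is essentially the standard proof (soundness by induction on derivations, with converse well-foundedness for the L\"ob axiom; completeness via a finite filtrated canonical model with the irreflexivity clause (b) and the existence lemma powered by L\"ob). The paper itself, however, does not prove Proposition~\ref{prop:PreliminaryGL} at all: it merely states it as well-known and cites \cite{Boolos}. So there is nothing to compare at the level of proof strategy---you have supplied a full proof where the paper deliberately omits one. Your sketch matches the argument one finds in the cited reference; in particular, your handling of the existence lemma (showing $T=\{\chi,\Box\chi:\Box\chi\in\Gamma\}\cup\{\Box\psi,\NOT\psi\}$ is consistent by deriving $\Box C\IMP\Box\psi$ via $\Box C\IMP\Box\Box C$ and L\"ob) is exactly the standard move.
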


\medskip

To describe the semantics for {\HS} and {\HD},
we need further definitions.

\begin{definition}
\textup{
Let $\lambda$ be a limit ordinal.
We say that a frame
$\Kframe{W^+}{\R^+}$
is a $\lambda$-extension 
of $\Kframe{W}{\R}$ 
if the following two conditions hold
for some $t_0 \in W$.
\begin{quote}
$W^+ = W \uplus \{t_\alpha \mid 0 < \alpha \leq \lambda\}$.
\\
${\R}^+ = {\R} 
\cup \{(t_{\alpha}, t_{\beta}) \mid 0 \leq \beta < \alpha \leq \lambda\} 
\cup \{(t_{\alpha}, x) \mid 0 < \alpha \leq \lambda \mbox{ and } (t_{0}, x) \in {\R}\}.$
\end{quote}
(Refer to Figure~\ref{Fig:D-model}, in which the ellipse is $\Kframe{W}{\R}$.)
The infinite set 
$\{t_\alpha \mid 0 < \alpha < \lambda\}$
is called the {\em tail} and 
the world $t_\lambda$ is called the {\em bottom} of this frame.
If a valuation $V^+$ coincides with $V$ for any worlds in $W$, 
then we say that 
the Kripke model $M^+ = \Kmodel{W^+}{\R^+}{V^+}$ is a 
{\em $\lambda$-extension} of $M = \Kmodel{W}{\R}{V}$. 
Moreover, 
$M^+$ is called 
a {\em constant} 
or 
a {\em strongly constant}
$\lambda$-extension of $M$
if the following condition holds, respectively.
\begin{quote}
\begin{tabular}{rl}
Constant: &
$(\forall p \in \PV)
(\forall \alpha, \beta < \lambda)
(V^+(t_\alpha, p) = V^+(t_\beta, p))$.
\\
Strongly constant: &
$(\forall p \in \PV)
(\forall \alpha, \beta \leq \lambda)
(V^+(t_\alpha, p) = V^+(t_\beta, p))$.
\end{tabular}
\end{quote}
We say that a formula $\varphi$ is 
{\em eventually always true in the tail of} $M^+$
if $(\exists \alpha<\lambda)(\alpha < \forall \beta < \lambda)(V(t_\beta, \varphi) = \T)$.
}
\end{definition}

The following proposition is easy to be proved.
This will be used implicitly in this paper.

\begin{proposition}
If $\Kmodel{W^+}{\R^+}{V^+}$ is a $\lambda$-extension of $\Kmodel{W}{\R}{V}$,
then for any world $w \in W$ and any formula $\varphi$,
we have $V^+(w, \varphi) = V(w, \varphi)$.
\end{proposition}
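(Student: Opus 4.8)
The plan is to prove, by induction on the structure of the formula $\varphi$, the statement ``$V^+(w,\varphi) = V(w,\varphi)$ for every $w \in W$'' (stating it uniformly over all worlds of $W$, so that the induction hypothesis is available at every world of $W$, which is what the modal clause will require). The only structural input needed is that forming a tail-limit-extension introduces no accessibility edge whose source lies in $W$.

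Concretely, the first step is to record that for every $w \in W$,
\[
\{\, x \in W^+ \mid (w,x) \in {\R^+} \,\} \;=\; \{\, x \in W \mid (w,x) \in {\R} \,\}.
\]
By the definition of tail-limit-extension, every pair in ${\R^+}\setminus{\R}$ has the form $(t_i,t_j)$ with $0\le j<i\le\omega$ or $(t_i,x)$ with $0<i\le\omega$ and $(t_0,x)\in{\R}$; in both cases the first coordinate is one of $t_1,t_2,\dots,t_\omega$, and since $W^+ = W \uplus \{t_1,t_2,\dots\} \uplus \{t_\omega\}$ is a disjoint union, none of these lies in $W$. In particular $t_0$, which does belong to $W$, is never the source of a newly added edge, because $j<i$ excludes $i=0$ in the first family and $0<i$ excludes $i=0$ in the second. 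Hence ${\R^+}$ agrees with ${\R}$ whenever the source is taken from $W$, and the targets of such edges again lie in $W$ since ${\R}\subseteq W\times W$.

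With this in hand the induction is routine. For $\varphi = \BOT$, both sides equal $\F$. For $\varphi$ a propositional variable, the equality is exactly the defining requirement that $V^+$ coincides with $V$ on the worlds of $W$. For $\varphi = \psi \IMP \chi$, apply the induction hypothesis to $\psi$ and $\chi$ at the same world $w$ and use the truth clause for $\IMP$. For $\varphi = \Box\psi$, the clause for $\Box$ gives $V^+(w,\Box\psi) = \T$ iff $V^+(x,\psi) = \T$ for all $x$ with $(w,x)\in{\R^+}$; by the displayed equality this quantification is over $\{x \in W \mid (w,x)\in{\R}\}$, and for each such $x \in W$ the induction hypothesis yields $V^+(x,\psi) = V(x,\psi)$, whence $V^+(w,\Box\psi) = V(w,\Box\psi)$. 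There is essentially no obstacle in this argument; the one point that warrants care is the bookkeeping behind the displayed equality — in particular, observing that although $t_0$ belongs to $W$ (unlike the other newly named worlds $t_1,t_2,\dots,t_\omega$), it is nonetheless not the source of any edge added in passing to the extension — which is precisely what guarantees that the $\Box$-clause ranges over an unchanged set of successors at every world of $W$.
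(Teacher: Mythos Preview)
Your proof is correct. The paper does not actually give a proof of this proposition; it merely states that it ``is easy to be proved'' and leaves the details to the reader, so your induction on $\varphi$ together with the observation that no newly added edge in ${\R^+}$ has its source in $W$ is exactly the intended (and standard) argument.
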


The completeness of {\HS} is described below,
which will be proved in Section~\ref{sec:cut-free-completeness}.
\begin{proposition}[Completeness of {\HS}]
\label{prop:PreliminaryS}
Let $\lambda$ be a limit ordinal.
The following are equivalent.
\begin{itemize}
\item [{\rm (s1)}]
$\HS \vdash \varphi$.
\item [{\rm (s2)}]
$\varphi$
is true at the bottom of any strongly constant $\lambda$-extension of any GL-model.
\item [{\rm (s3)}]
$\varphi$
is eventually always true in the tail of
any (strongly) constant $\lambda$-extension of any GL-model.
\item [{\rm (s4)}]
$\varphi$
is eventually always true in the tail of
any $\lambda$-extension of any GL-model.
\end{itemize}
\end{proposition}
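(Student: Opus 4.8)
**The plan is to prove the four-way equivalence in Proposition~\ref{prop:PreliminaryS} by establishing a cycle of implications, routing through the known Hilbert-style characterizations of {\HGL} and {\HS} where possible.** The natural order is: (soundness) $\HS \vdash \varphi$ $\Rightarrow$ second bullet; then second $\Rightarrow$ third; third $\Rightarrow$ fourth (this one is immediate, since a tail-limit-extension can always be refined to a strongly constant one, or rather: the fourth statement quantifies over a larger class of models, so it is formally \emph{weaker}—hence I will instead run the cycle fourth $\Rightarrow$ $\HS \vdash \varphi$ to close the loop); and finally fourth $\Rightarrow$ first, i.e.\ completeness. Let me restructure: I would prove $\text{(1)}\Rightarrow\text{(2)}\Rightarrow\text{(3)}\Rightarrow\text{(4)}\Rightarrow\text{(1)}$ where (1)--(4) are the bullets in order. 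The steps $\text{(2)}\Rightarrow\text{(3)}$ and $\text{(3)}\Rightarrow\text{(4)}$ are the easy directions and $\text{(4)}\Rightarrow\text{(1)}$ is the completeness heavy lifting.

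\textbf{For $\text{(1)}\Rightarrow\text{(2)}$ (soundness),} I would argue by induction on an {\HS}-derivation of $\varphi$. Modus ponens is trivial since truth at a fixed world $t_\omega$ is closed under it. For the axioms $\Box\alpha\IMP\alpha$: at the limit $t_\omega$ of a \emph{strongly constant} tail-limit-extension, if $V^+(t_\omega,\Box\alpha)=\T$ then in particular $V^+(t_i,\alpha)=\T$ for the tail worlds $t_i$ accessible from $t_\omega$; since the model is strongly constant, an easy subinduction on $\alpha$ shows $V^+(t_i,\alpha)=V^+(t_\omega,\alpha)$ for all tail worlds and the limit, giving $V^+(t_\omega,\alpha)=\T$. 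For the {\HGL}-theorem axioms: by Proposition~\ref{prop:PreliminaryGL} such a $\theta$ is true at \emph{every} world of \emph{every} GL-model, and the restriction of a tail-limit-extension to the set $\{t_\omega\}\cup W$ with the induced relation is itself a GL-frame (transitivity is visible from the definition of $\R^+$, and converse well-foundedness holds because the only new ascending moves go $t_\omega\R^+ t_i\R^+\cdots$, but from $t_\omega$ upward one reaches $t_i$ or worlds of $W$, and no infinite ascent is possible), so $\theta$ holds at $t_\omega$.

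\textbf{For $\text{(2)}\Rightarrow\text{(3)}$ and $\text{(3)}\Rightarrow\text{(4)}$:} For the first, given a (strongly) constant tail-limit-extension, observe that $\varphi$ true at $t_\omega$ forces $\varphi$ to be eventually always true in the tail—indeed in the \emph{strongly} constant case truth at $t_\omega$ equals truth at every $t_i$; in the merely constant case one shows, for $\varphi$ of the relevant form, that truth ``propagates down'' the tail (a standard argument: the tail worlds $t_i$ for large $i$ all see exactly the same worlds as $t_0$ plus each other, so their theories stabilize). Actually the cleanest route is $\text{(3)}\Rightarrow\text{(4)}$ is \emph{not} immediate either, since (4) quantifies over \emph{all} tail-limit-extensions whereas (3) only over constant ones; so the real content is that any tail-limit-extension can be ``made constant below'' without changing eventual-truth in the tail—one replaces $V^+$ on the tail by its limiting values, which exist because along an infinite descending $\R^+$-chain the sets of accessible worlds decrease (here just eventually coincide) and one invokes converse well-foundedness of the underlying GL-frame to stabilize the valuation of each subformula. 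I would isolate this as a small lemma.

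\textbf{For $\text{(4)}\Rightarrow\text{(1)}$ (completeness)}—\textbf{this is the main obstacle.} Contrapositively, if $\HS\not\vdash\varphi$ I must build a tail-limit-extension of some GL-model in which $\varphi$ fails to be eventually always true in the tail, i.e.\ $V(t_i,\neg\varphi)=\T$ for infinitely many (equivalently, by the stabilization lemma, cofinally many) $i$. The standard tool is the known completeness of {\HS}: $\HS\not\vdash\varphi$ means, by one of the existing characterizations cited from \cite{Beklemishev99,Boolos80,CZ,Visser84}, that $\neg\varphi$ holds at the root of some \emph{reflexive} point sitting atop a GL-model, or in the Solovay-style formulation that there is a GL-model with a ``top cluster'' refuting $\varphi$. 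Concretely, I would take a finite GL-model $M_0=\Kmodel{W_0}{\R_0}{V_0}$ with a world $r$ such that the model $M_0$ with an added reflexive point $r^+$ above everything (seeing all of $W_0$ and itself) refutes $\varphi$ at $r^+$; then I would ``unravel'' $r^+$ into the infinite descending tail $t_0\succ t_1\succ\cdots\succ t_\omega$, setting each $t_i$ (and $t_\omega$) to see all of $W_0$, each other downward, and copying the valuation of $r^+$ to every $t_i$ and to $t_\omega$. One checks by induction on subformulas that $V^+(t_i,\psi)=V^+(r^+,\psi)$ for \emph{all} $i\le\omega$—this uses that each $t_i$ has the same ``view'' as the reflexive $r^+$ (it sees $W_0$ and a reflexive-like cofinal tail below it). Hence $\neg\varphi$ is true at every tail world, contradicting (4). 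The delicate points are: (a) extracting the right finite GL-model-with-reflexive-top from $\HS\not\vdash\varphi$—I will cite the appropriate proposition from the references rather than reprove Solovay's theorem; and (b) verifying that replacing the single reflexive world by the infinite tail genuinely preserves the truth values of all subformulas at each tail world, for which the key observation is that from $t_i$ the accessible set is $W_0\cup\{t_j:j<i\}$ and the $t_j$'s below behave (valuation-wise) exactly like $r^+$, so the box-modality sees ``the same thing'' as at $r^+$.
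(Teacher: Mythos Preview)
Your cycle runs the opposite way round from the paper's. In the paper's proof (Theorem~\ref{th:cut-free-completeness-S}), the order is $(1)\Rightarrow(4)\Rightarrow(3)\Rightarrow(2)\Rightarrow(1)$ in your numbering: soundness is proved for the \emph{strongest} semantic condition (eventually always true in the tail of \emph{any} extension), the steps $(4)\Rightarrow(3)$ and $(3)\Rightarrow(2)$ are then trivial or follow from the single lemma ``in a strongly constant extension, truth at $t_\omega$ coincides with eventual truth in the tail'', and completeness is proved by building a strongly constant countermodel directly from an unprovable sequent. Running the cycle your way forces you to prove $(2)\Rightarrow(3)$ and $(3)\Rightarrow(4)$ semantically, and the second of these is where your argument breaks.

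Your stabilization idea for $(3)\Rightarrow(4)$ does not work: in an arbitrary (non-constant) tail-limit-extension the valuations $V^+(t_i,p)$ of propositional variables need not have any ``limiting value'' along the tail --- they can oscillate forever. It is true that truth values of \emph{boxed} subformulas stabilize (since falsity of $\Box\psi$ propagates downward), but that is not enough to pin down eventual truth of $\varphi$, which still depends on the atoms at each $t_i$. Replacing $V^+$ on the tail by some constant valuation gives you a \emph{different} model $M^{++}$, and eventual truth of $\varphi$ in the tail of $M^{++}$ says nothing about the tail of the original $M^+$, because the stabilized values of boxed subformulas can change when you change the tail valuations. There is no purely semantic shortcut here; the paper simply avoids the issue by proving soundness $(1)\Rightarrow(4)$ directly (a short induction on the {\HS}-proof, using that $\Box\alpha\IMP\alpha$ is eventually always true in any tail) and then getting $(4)\Rightarrow(3)$ for free.

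Two smaller points. In your $(1)\Rightarrow(2)$, restricting to $\{t_\omega\}\cup W$ is the wrong move: that changes what $t_\omega$ can see and hence truth values at $t_\omega$. The correct observation (which your parenthetical almost states) is that the \emph{full} frame $\langle W^+,\prec^+\rangle$ is itself a GL-frame, so GL-theorems hold at $t_\omega$ with no restriction needed. And your $(4)\Rightarrow(1)$ step leans on the reflexive-top characterization of {\bf S} from the literature; that is essentially the statement being proved, so if the intent is to give a self-contained argument you would still need the countermodel construction the paper carries out.
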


\begin{remark}
\textup{
The completeness of {\HS} has been expressed in several different statements in \cite{Beklemishev99, Boolos80, CZ, KK, Visser84}, 
but all of them are essentially included in the $\lambda = \omega$ case of Proposition~\ref{prop:PreliminaryS}.
In this paper, we extend it to arbitrary limit ordinals.
}
\end{remark}

\begin{remark}
\label{rem:bound-lambda}
\textup{
The limit ordinal $\lambda$ is arbitrarily fixed 
at the beginning of
Proposition~\ref{prop:PreliminaryS}.
On the other hand,
we can consider propositions in which $\lambda$ is 
bound at each sentence;
for example, there are two 	variants of the condition (s2)
as follows.
\begin{quote}
(s2$^+$) 
$\varphi$
is true at the bottom of any strongly constant 
$\lambda'$-extension of any GL-model,
for {\em any} limit ordinal $\lambda'$.
\\
(s2$^-$) 
$\varphi$
is true at the bottom of any strongly constant 
$\lambda'$-extension of any GL-model,
for {\em some} limit ordinal $\lambda'$.
\end{quote}
The conditions (s2$^+$), (s2$^-$),  and (s2)
are equivalent for any $\lambda$, 
because we have the following implications.
\[
({\rm s2}^+)
\ \stackrel{}{\Longrightarrow} \ 
({\rm s2}^-)
\ \stackrel{\text{Prop}~\ref{prop:PreliminaryS}}{\Longrightarrow} \ 
({\rm s1})
\ \stackrel{\text{Prop}~\ref{prop:PreliminaryS}}{\Longrightarrow} \ 
({\rm s2}^+).
\]
So, from now on, 
we will not mention
conditions like (s2$^+$) or (s2$^-$)
(except for a condition in Theorem~\ref{th:application}).
}
\end{remark}

\smallskip

The completeness of {\HD} is described below,
which will be proved in 
Sections~\ref{sec:cut-free-completeness} and \ref{sec:HilbertD}.
\begin{proposition}[Completeness of {\HD}]
\label{prop:PreliminaryD}
Let $\lambda$ be a limit ordinal.
The following are equivalent.
\begin{itemize}
\item [{\rm (d1)}]
$\HD \vdash \varphi$.
\item [{\rm (d2)}]
$\varphi$ 
is true at the bottom of any constant $\lambda$-extension of any GL-model.
\item [{\rm (d3)}]
$\varphi$
is true at the bottom of any $\lambda$-extension of any GL-model.
\end{itemize}
\end{proposition}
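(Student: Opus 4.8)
The plan is to prove the cycle (d1)$\Rightarrow$(d3)$\Rightarrow$(d2)$\Rightarrow$(d1). The implication (d3)$\Rightarrow$(d2) is immediate, since every constant tail-limit-extension is a tail-limit-extension. For (d1)$\Rightarrow$(d3) (soundness) I would fix a tail-limit-extension $M^{+}=\Kmodel{W^{+}}{\R^{+}}{V^{+}}$ of a GL-model $M=\Kmodel{W}{\R}{V}$, with tail $(t_{0},t_{1},\dots)$ and limit $t_{\omega}$, and check that every axiom of {\HD} is true at $t_{\omega}$ while modus ponens preserves truth at $t_{\omega}$ (the latter being trivial). For the {\HGL}-theorems the key point is that $M^{+}$ is itself a GL-model: transitivity of $\R^{+}$ is a short case analysis on the three parts of the definition of $\R^{+}$, and converse well-foundedness holds because an $\R^{+}$-ascending chain stays inside $W$ once it enters $W$, while among the new worlds it can only strictly decrease the index $i\in\omega+1$ and so is finite; then Proposition~\ref{prop:PreliminaryGL} gives that every {\HGL}-theorem is true everywhere in $M^{+}$, in particular at $t_{\omega}$. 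The axiom $\NOT\Box\BOT$ is true at $t_{\omega}$ because $t_{\omega}\R^{+}t_{0}$, so $t_{\omega}$ has a successor. For $\Box(\Box\alpha\OR\Box\beta)\IMP\Box\alpha\OR\Box\beta$, assume $\Box(\Box\alpha\OR\Box\beta)$ is true at $t_{\omega}$; since each $t_{i}$ with $i<\omega$ is an $\R^{+}$-successor of $t_{\omega}$, we get $V^{+}(t_{i},\Box\alpha)=\T$ or $V^{+}(t_{i},\Box\beta)=\T$ for every $i<\omega$. Since $\{w\mid t_{i}\R^{+}w\}\subseteq\{w\mid t_{j}\R^{+}w\}$ whenever $i\le j<\omega$, the sets $A=\{i<\omega\mid V^{+}(t_{i},\Box\alpha)=\T\}$ and $B=\{i<\omega\mid V^{+}(t_{i},\Box\beta)=\T\}$ are downward closed in $\omega$ with $A\cup B=\omega$; as a downward closed proper subset of $\omega$ is finite, $A=\omega$ or $B=\omega$. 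If $A=\omega$ then $\alpha$ is true at every world of $\bigcup_{i<\omega}\{w\mid t_{i}\R^{+}w\}=\{w\mid t_{\omega}\R^{+}w\}$, so $\Box\alpha$, and hence $\Box\alpha\OR\Box\beta$, is true at $t_{\omega}$; symmetrically if $B=\omega$.

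For (d2)$\Rightarrow$(d1) (essentially Beklemishev's theorem) I would argue the contrapositive: from $\HD\not\vdash\varphi$ I would construct a constant tail-limit-extension of a GL-model falsifying $\varphi$ at the limit. A preliminary observation is the reduction of {\HD}-provability to {\HGL}-provability: since modus ponens is the only rule of {\HD}, whose other axioms are $\NOT\Box\BOT$ and the instances of $\Box(\Box\alpha\OR\Box\beta)\IMP\Box\alpha\OR\Box\beta$, and the {\HGL}-theorems are already closed under modus ponens and contain all tautologies, one has that $\HD\vdash\psi$ iff $\HGL\vdash(\NOT\Box\BOT\AND\bigwedge_{j\le k}(\Box(\Box\alpha_{j}\OR\Box\beta_{j})\IMP\Box\alpha_{j}\OR\Box\beta_{j}))\IMP\psi$ for some finite list of pairs of formulas. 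The countermodel would be organised around a GL-model $M$ with a root $t_{0}$, obtained by a standard modal-completeness construction (filtration/unraveling of the {\HGL} canonical model) from a maximal {\HD}-consistent set $\Gamma\ni\NOT\varphi$ and the set $\Gamma_{\Box}:=\{\rho\mid\Box\rho\in\Gamma\}$; here one checks that $\Gamma_{\Box}$ is a consistent {\HGL}-theory (closure under modus ponens and necessitation coming from the normality and transitivity theorems of {\bf GL} inside $\Gamma$, consistency from $\NOT\Box\BOT\in\Gamma$) which moreover satisfies the disjunctive closure condition forced by the {\HD}-axioms $\Box(\Box\alpha\OR\Box\beta)\IMP\Box\alpha\OR\Box\beta$. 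One then attaches the tail worlds $t_{1},t_{2},\dots$, each carrying the valuation of $t_{0}$ on propositional variables and seeing precisely the earlier tail worlds together with the $\R$-successors of $t_{0}$, and sets $V^{+}(t_{\omega},p):=\T$ iff $p\in\Gamma$.

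The substantive step, which I expect to be the main obstacle, is the truth lemma at the limit: for every $\psi\in\SF{\varphi}$, $V^{+}(t_{\omega},\psi)=\T$ iff $\psi\in\Gamma$. The Boolean cases are routine and the atomic case holds by construction; the case $\psi=\Box\rho$ is where the construction must be calibrated. Using the observation from the soundness proof, $\Box\rho$ is true at $t_{\omega}$ iff it is true at all the tail worlds $t_{i}$, iff $\rho$ is true at every world of $M$ and at every tail world, and one needs this to be equivalent to $\rho\in\Gamma_{\Box}$, i.e.\ $\Box\rho\in\Gamma$. Matching the two sides requires (i) choosing $M$ canonically enough that each relevant formula outside $\Gamma_{\Box}$ is falsified somewhere in $M$ while each relevant member of $\Gamma_{\Box}$ is forced throughout $M$ --- this is where the consistency of the sets used in the construction must be verified carefully, using that $\Gamma$ contains all {\HD}-theorems (e.g.\ $\HD\vdash\NOT\Box\NOT\Box\BOT$) --- and (ii) a separate induction showing that a formula in $\Gamma_{\Box}$ remains true at every tail world $t_{i}$, which is delicate because $t_{i}$ carries $t_{0}$'s valuation yet sees strictly more worlds than $t_{0}$: one must track how the box-subformulas of $\rho$ evolve along the passage $t_{0},t_{1},\dots$ (they get replaced by the reflexive thickenings $\Box^{+}\sigma:=\sigma\AND\Box\sigma$, iterated), and check that these thickened formulas still lie in $\Gamma_{\Box}$. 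Granting the truth lemma, $\NOT\varphi\in\Gamma$ yields $V^{+}(t_{\omega},\varphi)=\F$, and $M^{+}$ is the required constant tail-limit-extension. This argument parallels, and extends by one layer, the known completeness proof for {\HS} recorded in Proposition~\ref{prop:PreliminaryS}.
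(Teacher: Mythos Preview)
Your argument is sound, but it takes a different route from the paper. The paper does not prove Proposition~\ref{prop:PreliminaryD} directly: as the Remark following it states, the equivalence (d1)$\Leftrightarrow$(d2) is quoted from Beklemishev~\cite{Beklemishev99}, while the new equivalence (d2)$\Leftrightarrow$(d3) is deferred to Theorem~\ref{th:cut-free-completeness-Da}, where it drops out of the soundness/semi-cut-free-completeness cycle for the sequent calculus {\sDa} (together with the correspondence of {\sDa} with {\HD} established in Section~\ref{sec:HilbertD}). Your direct soundness proof of (d1)$\Rightarrow$(d3) is correct and in fact mirrors the key step in the paper's proof of $(1)\Rightarrow(4)$ of Theorem~\ref{th:cut-free-completeness-Da}: the pigeonhole/monotonicity argument you give for $\Box(\Box\alpha\OR\Box\beta)\IMP\Box\alpha\OR\Box\beta$ is essentially the same observation the paper uses for the $(\RuleDa)$ case there, just phrased Hilbert-style rather than sequent-style. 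Your (d2)$\Rightarrow$(d1) is a sketch of Beklemishev's canonical-model argument, which the paper simply cites; the outline is right and you correctly flag the delicate point (the truth lemma at $t_\omega$ for boxed formulas, where the tail worlds see strictly more than $t_0$), but you should be aware that the paper sidesteps this entirely by routing through {\sDa}. The trade-off: your approach is self-contained and does not need sequent calculi, at the price of redoing Beklemishev's work; the paper's approach reuses its own sequent-calculus machinery, so that (d2)$\Leftrightarrow$(d3) comes for free once Theorem~\ref{th:cut-free-completeness-Da} is in place.
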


\begin{remark}
\textup{
Beklemishev \cite{Beklemishev99} proved 
the equivalence between (d1), (d2) for $\lambda = \omega$, and 
another condition using `accumulating root'.
In Section~\ref{sec:cut-free-completeness},
we will show the equivalence 
between (d2) and (d3). 
In Section~\ref{sec:HilbertD},
we will show the equivalence 
between (d1) and others.
}
\end{remark}


\section{Sequent calculi}
\label{sec:seq-cal}

We introduce sequent calculi.
From now on, letters $\GA, \DE, \ldots$ denote
finite sets of formulas.
As usual, the expression `$\GA, \varphi, \psi, \DE$', for example, 
stands for the set $\GA \cup \{\varphi, \psi\} \cup \DE$,
and `$\Box\GA$' stands for the set 
$\{\Box\gamma \mid \gamma \in \GA\}$.
$\SF{\GA}$ denotes the set of all subformulas
of formulas in $\GA$.

We use three different arrows $\Rightarrow$, $\Rrightarrow$, and $\Darrow$
to define three kinds of sequents
$\SEQ{\GA}{\DE}$ (called {\em GL-sequent}),
$\SSEQ{\GA}{\DE}$ (called {\em S-sequent}),
and 
$\DSEQ{\GA}{\DE}$ (called {\em D-sequent}).

The following is the well-known sequent calculus for classical propositional logic; 
we call it {\LK}.
\begin{quote}
Initial sequents:
$\SEQ{\varphi}{\varphi}$
\mbox{ and }
$\SEQ{\BOT}{}$

Inference rules:
\[
\infer[\mbox{(weakening), where $\GA \subseteq \GA'$ and $\DE \subseteq \DE'$.}]{\SEQ{\GA'}{\DE'}}{\SEQ{\GA}{\DE}}
\]
\[
\infer[\mbox{($\IMP$left)}]{\SEQ{\varphi \IMP \psi, \GA}{\DE}}{
 \SEQ{\GA}{\DE, \varphi}
 &
 \SEQ{\psi, \GA}{\DE}
}
\quad
\infer[\mbox{($\IMP$right)}]{\SEQ{\GA}{\DE, \varphi \IMP \psi}}{
 \SEQ{\varphi, \GA}{\DE, \psi}
} 
\]
\[
\infer[\mbox{(cut)}]{\SEQ{\GA,\PI}{\DE,\SI}}{\SEQ{\GA}{\DE, \varphi} & \SEQ{\varphi, \PI}{\SI}}
\]
\end{quote}
Similarly the sequent calculus
{\LKS} (or {\LKD}, respectively)
consists of the above initial sequents and inference rules
where all `$\Rightarrow$' are replaced with `$\Rrightarrow$' (or `$\Darrow$', respectively).
Next we introduce five rules 
({\RuleGL}), ({\RuleS}), (\RuleLeftBox), ({\RuleDa}),
and ({\RuleDb}) as below.
\[
\infer[(\RuleGL)]
{\SEQ{\Box\GA}{\Box\varphi}}{
  \SEQ{\GA, \Box\GA, \Box\varphi}{\varphi}
}
\]
\[
\infer[(\RuleS)
\mbox{ \ (used in {\sS}, {\sDb})}]
{\SSEQ{\GA}{\DE}}{
  \SEQ{\GA}{\DE}
}
\qquad
\infer[(\RuleLeftBox)
\mbox{ \ (used in {\sS}, {\sDb})}]
{\SSEQ{\Box\varphi, \GA }{\DE}}{
  \SSEQ{\varphi, \GA}{\DE}
}  
\]
\[
\infer[(\RuleDa)
\mbox{ \ (used in {\sDa})}]
{\DSEQ{\Box\GA}{\Box\DE}}{
  \SEQ{\GA, \Box\GA}{\Box\DE}
}
\qquad
\infer[(\RuleDb)
\mbox{ \ (used in {\sDb})}]
{\DSEQ{\Box\GA}{\Box\DE}}{
  \SSEQ{\Box\GA}{\Box\DE}
}
\]
\begin{table}[h]
\caption{Four sequent calculi}
\label{table:seq-cal}
\begin{center}
\begin{tabular}{ccccccccccc}
\hline
&& {\LK} & {\LKS} & {\LKD} & 
& (\RuleGL) & (\RuleS) & (\RuleLeftBox) & (\RuleDa) & (\RuleDb)
\\
\hline
{\sGL} && 
$\checkmark$ & & &&
$\checkmark$ & & & &
\\
{\sS} &&
$\checkmark$ & $\checkmark$ & &&
$\checkmark$ & $\checkmark$ & $\checkmark$ & &
\\
{\sDa} &&
$\checkmark$ & & $\checkmark$ &&
$\checkmark$ & & & $\checkmark$ & 
\\
{\sDb} && 
$\checkmark$ & $\checkmark$ & $\checkmark$ &&
$\checkmark$ & $\checkmark$ & $\checkmark$  & & $\checkmark$ 
\\
\hline
\end{tabular} 
\end{center}
\end{table}
Using these, we define four sequent calculi
{\sGL}, {\sS}, {\sDa},  and {\sDb}
according to Table~\ref{table:seq-cal}.
{\sGL} ($ = {\LK} + (\RuleGL)$) is a well-known sequent calculus for {\bf GL},
and has been widely studied; see \cite{Gore} and its references.
{\sS} ($ = {\sGL} + (\RuleS) + (\RuleLeftBox) +  {\LKS}$) is a sequent calculus for {\bf S}
using two kinds of sequents (GL- and S-sequents),
and it (or similar calculi) has been studied in 
\cite{Beklemishev87, Kushida, KK}.
The two calculi {\sDa} and {\sDb} are new,
and they are obtained as below.
\[
{\sDa} = {\sGL} + (\RuleDa) + {\LKD}.
\]
\[
{\sDb} = {\sS} + (\RuleDb) + {\LKD}.
\]
The superscripts `2' and `3' denote the 
number of kinds of sequents;
that is,
{\sDa} uses two kinds (GL- and D-sequents)
and 
{\sDb} uses three kinds of sequents.

Proofs in each calculus are called 
{\sGL}/{\sS}/{\sDa}/{\sDb}-proofs.
In general, an {\sS}-proof has two-layered structure
(the upper layer consists of GL-sequents and
the lower layer consists of S-sequents).
Similarly, 
a {\sDa}-proof has two layers
(upper: GL-sequents; lower: D-sequents),
and 
a {\sDb}-proof has three layers
(top: GL-sequents; middle: S-sequents; bottom: D-sequents).
Note that 
only {\LKD}-rules can have D-sequents as assumptions.
Therefore the bottom layer of a {\sDa}/{\sDb}-proof 
consists of only {\LKD}-rules.

As usual, the term `cut-free' means `without using the cut rule'.
The following examples are
cut-free {\sDa}/{\sDb}-proofs,
where `w.' stands for `weakening', and 
($\OR$left), ($\OR$right), and ($\NOT$right) are suitable combinations of rules in {\bf LK} (since $\OR$ and $\NOT$ are abbreviations).

\begin{example}
\label{ex:sDa-proof}
\textup{
$\text{Cut-free \sDa} \vdash \DSEQ{}{\Box(\Box\varphi \OR \Box\psi) \IMP \Box\varphi \OR \Box\psi}$.
\[
\infer[\mbox{\rm ($\IMP$right)}]{\DSEQ{}{\Box(\Box\varphi \OR \Box\psi) \IMP \Box\varphi \OR \Box\psi}}{
 \infer[\mbox{\rm ($\OR$right)}]{\DSEQ{\Box(\Box\varphi \OR \Box\psi)}{\Box\varphi \OR \Box\psi}}{
  \infer[(\RuleDa)]{\DSEQ{\Box(\Box\varphi \OR \Box\psi)}{\Box\varphi, \Box\psi}}{
   \infer[\mbox{\rm (w.)}]{\SEQ{\Box\varphi \OR \Box\psi, \Box(\Box\varphi \OR \Box\psi)}{\Box\varphi, \Box\psi}}{
    \infer[\mbox{\rm ($\OR$left)}]{\SEQ{\Box\varphi \OR \Box\psi}{\Box\varphi, \Box\psi}}{ 
      \SEQ{\Box\varphi}{\Box\varphi}
       &
       \SEQ{\Box\psi}{\Box\psi}
    }
   }
  }
 }
}
\]
}
\end{example}
\begin{example}
\label{ex:sDb-proof}
\textup{
$\text{Cut-free \sDb} \vdash \DSEQ{}{\Box(\Box\varphi \OR \Box\psi) \IMP \Box\varphi \OR \Box\psi}$.
\[
\infer[\mbox{\rm ($\IMP$right)}]{\DSEQ{}{\Box(\Box\varphi \OR \Box\psi) \IMP \Box\varphi \OR \Box\psi}}{
 \infer[\mbox{\rm ($\OR$right)}]{\DSEQ{\Box(\Box\varphi \OR \Box\psi)}{\Box\varphi \OR \Box\psi}}{
  \infer[(\RuleDb)]{\DSEQ{\Box(\Box\varphi \OR \Box\psi)}{\Box\varphi, \Box\psi}}{
   \infer[(\RuleLeftBox)]{\SSEQ{\Box(\Box\varphi \OR \Box\psi)}{\Box\varphi, \Box\psi}}{
    \infer[\mbox{\rm ($\OR$left)}]{\SSEQ{\Box\varphi \OR \Box\psi}{\Box\varphi, \Box\psi}}{ 
      \SSEQ{\Box\varphi}{\Box\varphi}
       &
       \SSEQ{\Box\psi}{\Box\psi}
    }
   }
  }
 }
}
\]
}
\end{example}
\begin{example}
\label{ex:sDa-proof2}
\textup{
$\text{Cut-free \sDa} \vdash \DSEQ{}{\NOT\Box\BOT}$.
\[
\infer[\mbox{\rm ($\NOT$right)}]{\DSEQ{}{\NOT\Box\BOT}}{
 \infer[(\RuleDa)]{\DSEQ{\Box\BOT}{}}{
   \infer[\mbox{\rm (w.)}]{\SEQ{\BOT, \Box\BOT}{}}{
     \SEQ{\BOT}{}
   }
 }
}
\]
}
\end{example}

The correspondence between sequent calculi and Hilbert-style systems 
is the same as in classical logic, 
as below.

\begin{proposition}
\label{prop:Corespondence-GL}
The following are equivalent.
\begin{itemize}
\item
$\sGL \vdash \SEQ{\GA}{\DE}$. 
\item
$\HGL \vdash \bigwedge \GA \IMP \bigvee \DE$.
\end{itemize}
\end{proposition}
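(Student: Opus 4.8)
The plan is to prove the two directions separately, each by induction.

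\medskip\noindent\emph{From $\sGL$ to $\HGL$.}
I would argue by induction on the length of a $\sGL$-proof of $\SEQ{\GA}{\DE}$, establishing $\HGL \vdash \bigwedge\GA \IMP \bigvee\DE$ (with the conventions $\bigwedge\emptyset = \TOP$ and $\bigvee\emptyset = \BOT$). The initial sequents and all rules of $\LK$ --- weakening, $(\IMP\mathrm{L})$, $(\IMP\mathrm{R})$, and $(\mathrm{cut})$ --- are treated exactly as in the classical correspondence between $\LK$ and Hilbert-style classical propositional logic, using only propositional reasoning. The only case that uses modal axioms is $(\RuleGL)$: from the induction hypothesis $\HGL \vdash (\bigwedge\GA \AND \bigwedge\Box\GA \AND \Box\varphi) \IMP \varphi$ I must obtain $\HGL \vdash \bigwedge\Box\GA \IMP \Box\varphi$. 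Abbreviating $B := \bigwedge\Box\GA$, I first record the standard {\HGL}-facts that $\Box$ commutes with finite conjunctions and that $\Box\psi \IMP \Box\Box\psi$ is provable; together these give $\HGL \vdash B \IMP \Box(\bigwedge\GA \AND B)$. Rewriting the hypothesis as $\HGL \vdash (\bigwedge\GA \AND B) \IMP (\Box\varphi \IMP \varphi)$ and applying necessitation and the distribution axiom yields $\HGL \vdash \Box(\bigwedge\GA \AND B) \IMP \Box(\Box\varphi \IMP \varphi)$, and then the Löb axiom $\Box(\Box\varphi \IMP \varphi) \IMP \Box\varphi$ gives $\HGL \vdash \Box(\bigwedge\GA \AND B) \IMP \Box\varphi$. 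Composing with $\HGL \vdash B \IMP \Box(\bigwedge\GA \AND B)$ produces $\HGL \vdash B \IMP \Box\varphi$, which is the claim.

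\medskip\noindent\emph{From $\HGL$ to $\sGL$.}
I would first show, by induction on an {\HGL}-proof, that $\HGL \vdash \chi$ implies $\sGL \vdash \SEQ{}{\chi}$. Tautologies are provable (cut-free) in $\LK$ by its classical completeness. The axiom $\Box(\varphi\IMP\psi)\IMP(\Box\varphi\IMP\Box\psi)$ follows by one application of $(\RuleGL)$ to the easily $\LK$-derivable sequent $\SEQ{\varphi\IMP\psi,\,\varphi,\,\Box(\varphi\IMP\psi),\,\Box\varphi,\,\Box\psi}{\psi}$, and the Löb axiom by $(\RuleGL)$ applied to $\SEQ{\Box\varphi\IMP\varphi,\,\Box(\Box\varphi\IMP\varphi),\,\Box\varphi}{\varphi}$. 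Modus ponens is simulated using $(\IMP\mathrm{L})$ and $(\mathrm{cut})$, and the rule $(\Box)$ is simulated by $(\RuleGL)$ with $\GA = \emptyset$ applied to the weakening of $\SEQ{}{\varphi}$ to $\SEQ{\Box\varphi}{\varphi}$. Finally, given $\HGL \vdash \bigwedge\GA \IMP \bigvee\DE$, we get $\sGL \vdash \SEQ{}{\bigwedge\GA \IMP \bigvee\DE}$, and a routine combination of $\LK$-rules with $(\mathrm{cut})$ converts this into $\sGL \vdash \SEQ{\GA}{\DE}$.

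\medskip\noindent The only step that is not a mechanical translation between a Gentzen-style and a Hilbert-style system is the $(\RuleGL)$ case of the first direction, i.e. the derivation of the Löb \emph{rule} inside {\HGL} from the Löb \emph{axiom}; this is the point I expect to require the most care, though it is itself a well-known argument.
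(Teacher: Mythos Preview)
Your proposal is correct and follows the standard approach; the paper itself does not give a proof of this proposition, merely noting that it (together with the analogous statement for {\sS}/{\HS}) is ``well-known and easy to prove.'' Your treatment of the $(\RuleGL)$ case via the L\"ob axiom is exactly the expected argument.
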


\begin{proposition}
\label{prop:Corespondence-S}
The following are equivalent.
\begin{itemize}
\item
$\sS \vdash \SSEQ{\GA}{\DE}$. 
\item
$\HS \vdash \bigwedge \GA \IMP \bigvee \DE$.
\end{itemize}
\end{proposition}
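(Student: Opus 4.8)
The plan is to run the standard argument relating a sequent calculus to a Hilbert system, adapted to the fact that $\sS$ manipulates two kinds of sequents, and to reduce the general statement about $\SSEQ{\GA}{\DE}$ to the single-formula case $\SSEQ{}{\varphi}$ by routine propositional bookkeeping. Write $\bigwedge\GA$ for the conjunction of $\GA$ (empty conjunction $=\TOP$) and $\bigvee\DE$ for the disjunction of $\DE$ (empty disjunction $=\BOT$). Throughout I use Proposition~\ref{prop:Corespondence-GL} (the analogous correspondence for {\bf GL}) and the fact that $\HS$ is closed under classical propositional reasoning, since it contains all tautologies as axioms and has modus ponens.

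For the direction $\sS\vdash\SSEQ{\GA}{\DE}\ \Rightarrow\ \HS\vdash\bigwedge\GA\IMP\bigvee\DE$, I would induct on a fixed $\sS$-proof, establishing simultaneously: (i) every GL-sequent $\SEQ{\GA}{\DE}$ occurring in it satisfies $\HGL\vdash\bigwedge\GA\IMP\bigvee\DE$; and (ii) every S-sequent $\SSEQ{\GA}{\DE}$ occurring in it satisfies $\HS\vdash\bigwedge\GA\IMP\bigvee\DE$. For (i) it suffices to observe that a subproof ending in a GL-sequent uses only the rules of $\LK$ and $(\RuleGL)$ --- no rule of $\sS$ produces a GL-sequent from an S-sequent --- hence is an $\sGL$-proof, so (i) is exactly Proposition~\ref{prop:Corespondence-GL}. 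For (ii): the $\LKS$ rules (initial sequents, weakening, the $\IMP$-rules, cut) are absorbed by classical propositional reasoning in $\HS$; the rule $(\RuleS)$ is handled because its conclusion $\bigwedge\GA\IMP\bigvee\DE$ is then a theorem of $\HGL$, hence an axiom of $\HS$; and $(\RuleLeftBox)$ is handled because, using the $\HS$-axiom $\Box\varphi\IMP\varphi$, the induction hypothesis $\HS\vdash\varphi\AND\bigwedge\GA\IMP\bigvee\DE$ yields $\HS\vdash\Box\varphi\AND\bigwedge\GA\IMP\bigvee\DE$.

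For the converse I would first prove the special case $\HS\vdash\varphi\ \Rightarrow\ \sS\vdash\SSEQ{}{\varphi}$ by induction on a fixed $\HS$-proof of $\varphi$. If $\varphi$ is a theorem of $\HGL$, then $\sGL\vdash\SEQ{}{\varphi}$ by Proposition~\ref{prop:Corespondence-GL}, and one application of $(\RuleS)$ gives $\SSEQ{}{\varphi}$. If $\varphi$ is an instance $\Box\alpha\IMP\alpha$ of the extra axiom scheme of $\HS$, start from the initial sequent $\SSEQ{\alpha}{\alpha}$, apply $(\RuleLeftBox)$ to obtain $\SSEQ{\Box\alpha}{\alpha}$, and then ($\IMP$R) to obtain $\SSEQ{}{\Box\alpha\IMP\alpha}$. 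If $\varphi$ comes by modus ponens from $\psi\IMP\varphi$ and $\psi$, then from the induction hypotheses $\SSEQ{}{\psi\IMP\varphi}$ and $\SSEQ{}{\psi}$ one derives $\SSEQ{\psi\IMP\varphi}{\varphi}$ by ($\IMP$L) from a weakening $\SSEQ{}{\varphi,\psi}$ of $\SSEQ{}{\psi}$ and the initial sequent $\SSEQ{\varphi}{\varphi}$, and a cut yields $\SSEQ{}{\varphi}$. Finally, given the actual hypothesis $\HS\vdash\bigwedge\GA\IMP\bigvee\DE$, one gets $\sS\vdash\SSEQ{}{\bigwedge\GA\IMP\bigvee\DE}$ and then cuts against the classically valid (hence $\LKS$-derivable) S-sequent $\SSEQ{\bigwedge\GA\IMP\bigvee\DE,\GA}{\DE}$ to conclude $\sS\vdash\SSEQ{\GA}{\DE}$.

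I do not expect a genuine obstacle; the statement is the expected analogue of Proposition~\ref{prop:Corespondence-GL}. The points that need care are bookkeeping ones: (a) justifying that a GL-sequent appearing inside an $\sS$-proof may be treated by the already-established {\bf GL}-correspondence, i.e.\ that the two sorts of sequents do not interact in the ``wrong'' direction; (b) matching the two distinctive ingredients on each side --- $(\RuleS)$ corresponds to the clause ``theorems of $\HGL$ are axioms of $\HS$'' and $(\RuleLeftBox)$ corresponds to the axiom scheme $\Box\varphi\IMP\varphi$, each used once in the soundness part and once in the completeness part; and (c) noting that the completeness direction uses the cut rule freely, which is legitimate since cut belongs to $\sS$ (no cut-freeness is claimed in this proposition).
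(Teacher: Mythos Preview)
Your proposal is correct and is precisely the standard argument one would expect; the paper itself does not spell out a proof but simply states that Propositions~\ref{prop:Corespondence-GL} and \ref{prop:Corespondence-S} are ``well-known and easy to prove.'' Your handling of the two-sorted aspect (observing that no rule yields a GL-sequent from an S-sequent, so the GL-part reduces to Proposition~\ref{prop:Corespondence-GL}) and the matching of $(\RuleS)$ and $(\RuleLeftBox)$ with the two axiom clauses of $\HS$ is exactly what is needed.
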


\begin{proposition}
\label{prop:Corespondence-D}
The following are equivalent.
\begin{itemize}
\item
$\sDa \vdash \DSEQ{\GA}{\DE}$. 
\item
$\sDb \vdash \DSEQ{\GA}{\DE}$. 
\item
$\HD \vdash \bigwedge \GA \IMP \bigvee \DE$.
\end{itemize}
\end{proposition}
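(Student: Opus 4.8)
\textbf{Proof proposal for Proposition~\ref{prop:Corespondence-D}.}

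The plan is to prove the chain of equivalences by a mix of direct translation arguments and appeals to the earlier correspondence results. The cleanest route is to establish a triangle: (i) $\sDb \vdash \DSEQ{\GA}{\DE} \Rightarrow \HD \vdash \bigwedge\GA \IMP \bigvee\DE$, (ii) $\HD \vdash \bigwedge\GA \IMP \bigvee\DE \Rightarrow \sDa \vdash \DSEQ{\GA}{\DE}$, and (iii) $\sDa \vdash \DSEQ{\GA}{\DE} \Rightarrow \sDb \vdash \DSEQ{\GA}{\DE}$. Since $\sDa$ and $\sDb$ share the subsystems $\LK$, $\LKD$, and $(\RuleGL)$, step (iii) reduces to showing that the single rule $(\RuleDa)$ of $\sDa$ is derivable in $\sDb$: given a cut-free-or-not $\sDb$-derivation of the premise $\SEQ{\GA,\Box\GA}{\Box\DE}$, apply $(\RuleS)$ to obtain $\SSEQ{\GA,\Box\GA}{\Box\DE}$, then repeatedly apply $(\RuleLeftBox)$ to the boxed formulas $\Box\GA$ on the left to get $\SSEQ{\Box\GA,\Box\GA}{\Box\DE}$, absorb the duplicate via the set convention, and finish with $(\RuleDb)$ to get $\DSEQ{\Box\GA}{\Box\DE}$; the non-principal formulas are handled by $\LKD$-weakening exactly as in $\sDa$. (One must be slightly careful: $(\RuleLeftBox)$ removes a box, but here we want to keep $\Box\GA$ on the left of the S-sequent; the trick is that the premise $\SEQ{\GA,\Box\GA}{\Box\DE}$ already contains both $\GA$ and $\Box\GA$, so after $(\RuleS)$ and then $(\RuleLeftBox)$-steps that strip the outer boxes of one copy, we recover $\GA, \Box\GA$ and can feed it where $(\RuleDb)$ expects $\Box\GA$ — this needs the standard observation that $\sS$ proves $\SSEQ{\Box\GA}{\Box\DE}$ from $\SEQ{\GA,\Box\GA}{\Box\DE}$, which is essentially how $(\RuleDb)$ mirrors $(\RuleDa)$.)

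For step (i), I would argue by induction on the height of the $\sDb$-derivation. The $\LK$- and $\LKD$-parts and the initial sequents translate to tautologies and are closed under modus ponens in $\HD$ in the standard classical way; the $(\RuleGL)$-case is exactly the content of Proposition~\ref{prop:Corespondence-GL} combined with the fact that $\HGL$-theorems are $\HD$-axioms; the $(\RuleS)$- and $(\RuleLeftBox)$-cases translate using Proposition~\ref{prop:Corespondence-S} together with the inclusion $\HS \vdash \varphi$ whenever the S-sequent is provable — but wait, $\HS$ is \emph{stronger} than $\HD$, so a naive translation of an intermediate S-sequent to an $\HS$-theorem does not obviously give an $\HD$-theorem. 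The resolution, and the genuine subtlety of the proposition, is that S-sequents occurring inside a $\sDb$-proof of a \emph{D-sequent} are only ever used as premises of $(\RuleDb)$, whose conclusion is $\DSEQ{\Box\GA}{\Box\DE}$; so I should not translate S-sequents to $\HS$ in isolation but rather show directly that if $\sS \vdash \SSEQ{\Box\GA}{\Box\DE}$ then $\HD \vdash \Box(\bigwedge\Box\GA \IMP \bigvee\Box\DE)$ is \emph{not} needed — instead, what is needed is $\HD \vdash \bigwedge\Box\GA \IMP \bigvee\Box\DE$, which follows because $\HS \vdash \bigwedge\Box\GA \IMP \bigvee\Box\DE$ and, crucially, the $\HD$-axiom $\Box(\Box\varphi \OR \Box\psi) \IMP \Box\varphi \OR \Box\psi$ (suitably iterated, together with $\NOT\Box\BOT$) recaptures precisely those instances of $\Box\alpha \IMP \alpha$ with $\alpha$ a disjunction of boxed formulas or $\BOT$ — which is the definitional content of $\HD$ as a restriction of $\HS$. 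So the honest statement to prove by induction is: every D-sequent provable in $\sDb$ translates to an $\HD$-theorem, and along the way every S-sequent $\SSEQ{\Box\GA}{\Box\DE}$ that feeds a $(\RuleDb)$ gives $\HD \vdash \bigwedge\Box\GA \IMP \bigvee\Box\DE$ because $\HS$ proves it and $\HD$ and $\HS$ agree on implications between disjunctions-of-boxes.

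For step (ii), I would proceed by induction on an $\HD$-proof of $\bigwedge\GA \IMP \bigvee\DE$, or more conveniently reduce to single-formula form: it suffices to show $\HD \vdash \varphi \Rightarrow \sDa \vdash \DSEQ{}{\varphi}$, and then close under classical reasoning inside $\LKD$ to recover arbitrary $\GA,\DE$. The $\HGL$-axioms are handled by Proposition~\ref{prop:Corespondence-GL}: if $\HGL \vdash \varphi$ then $\sGL \vdash \SEQ{}{\varphi}$, hence $\LKD$-weakening from within $\sDa$ gives $\DSEQ{}{\varphi}$ once we know $\sGL$-proofs embed — actually we need $\sDa \vdash \SEQ{}{\varphi} \Rightarrow \sDa \vdash \DSEQ{}{\varphi}$, which holds by a routine induction replacing every $\Rightarrow$ by $\Darrow$ throughout the $\sGL$-proof, legitimate because $\sDa$ contains $\LKD$ and $(\RuleGL)$. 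The axiom $\NOT\Box\BOT$ is Example~\ref{ex:sDa-proof2}, and the axiom $\Box(\Box\varphi \OR \Box\psi) \IMP \Box\varphi \OR \Box\psi$ is Example~\ref{ex:sDa-proof}; modus ponens is simulated by $(\mathrm{cut})$ in $\LKD$. This direction is essentially bookkeeping.

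\textbf{Main obstacle.} The delicate point is step (i), specifically making precise why the presence of S-sequents inside a $\sDb$-proof of a D-sequent does not smuggle in the full strength of $\HS$ (which would be unsound for $\HD$, since $\Box p \IMP p$ is an $\HS$-theorem but not a $\HD$-theorem). The key structural fact to isolate and prove is that S-sequents are used \emph{only} as premises of $(\RuleDb)$ (and of $\LKS$/$(\RuleLeftBox)$/$(\RuleS)$ steps leading to such a premise), so their conclusions always have the shape $\SSEQ{\Box\GA}{\Box\DE}$ with all formulas boxed, and on such sequents the translation $\bigwedge\Box\GA \IMP \bigvee\Box\DE$ is $\HD$-provable — because $\HD$ and $\HS$ prove exactly the same implications of the form ``conjunction of boxes implies disjunction of boxes''. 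Establishing that last agreement (or circumventing it by a more careful induction hypothesis tracking the \emph{shape} of each sequent's endsequent) is where the real work lies; everything else is the standard sequent-to-Hilbert translation.
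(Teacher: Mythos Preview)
Your triangle is the right shape, and steps (ii) and (iii) are essentially correct. Two small repairs: in (ii) you cannot literally replace $\Rightarrow$ by $\Darrow$ in a $\sGL$-proof, since $(\RuleGL)$ has no $\Darrow$-version; the induction on the $\sGL$-proof must treat the $(\RuleGL)$ case by weakening its conclusion $\SEQ{\Box\PI}{\Box\varphi}$ to $\SEQ{\PI,\Box\PI}{\Box\varphi}$ and then applying $(\RuleDa)$ (this is Theorem~\ref{th:GL-extension}). In (iii), $(\RuleLeftBox)$ read top-down \emph{adds} a box, so from $\SSEQ{\GA,\Box\GA}{\Box\DE}$ one applies it to each $\gamma\in\GA$ to reach $\SSEQ{\Box\GA}{\Box\DE}$ and then uses $(\RuleDb)$; your parenthetical has the direction reversed but the mechanism is right.

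The substantive issue is step (i). You correctly isolate the crux as the claim that $\HS$ and $\HD$ prove the same formulas of the shape $\bigwedge\Box\GA \IMP \bigvee\Box\DE$, and you correctly flag this as ``where the real work lies'' --- but you give no argument for it, and it is not merely definitional: the $\HD$ axiom scheme supplies reflection only for $\alpha=\BOT$ and for \emph{binary} disjunctions $\Box\varphi\OR\Box\psi$, so already the $n$-ary case $\HD\vdash\Box\bigvee\Box\DE\IMP\bigvee\Box\DE$ needs proof (this is a lemma of Beklemishev cited in the paper), and bridging from an arbitrary $\HS$-derivation needs more still. The paper does not attempt your direct route. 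Instead it introduces an intermediate Hilbert system $\HDa$ whose non-tautological axioms are exactly the formulas $\bigwedge\Box\GA\IMP\bigvee\Box\DE$ for which $\bigwedge(\GA,\Box\GA)\IMP\bigvee\Box\DE$ is a $\HGL$-theorem, so that $\sDa\Leftrightarrow\HDa$ is immediate; then $\HDa\Leftrightarrow\HD$ (Theorem~\ref{th:Correspondence-D-Da}) is proved using Beklemishev's lemma together with necessitation and $\Box\gamma\IMP\Box\Box\gamma$ in $\HGL$. The remaining link, $\sDb\Rightarrow\sDa$ (the hard half of Theorem~\ref{th:sDa-sDb}), is obtained by first extracting from any $\sS$-proof of $\SSEQ{\Box\PS}{\Box\PH}$ a $\sGL$-proof of $\SEQ{{\rm ref}(\SI),\Box\PS}{\Box\PH}$ for some finite $\SI$ (Lemma~\ref{lm:for-sDb-sDa-I}), and then eliminating the reflection formulas by a $2^{|\SI|}$-fold cut argument in $\sDa$ (Lemma~\ref{lm:for-sDb-sDa-II}). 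That last lemma is precisely the missing engine behind your asserted $\HS$/$\HD$ agreement on box-to-box implications.
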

Propositions~\ref{prop:Corespondence-GL} and \ref{prop:Corespondence-S}
are well-known and easy to prove.
Proposition~\ref{prop:Corespondence-D} will be shown 
in Section~\ref{sec:HilbertD}.

\begin{remark}
\textup{
{\sDb} is obtained from {\sS} by adding some rules,
while the logic {\bf D} is strictly weaker than {\bf S}.
}
\end{remark}

In the rest of this section, 
we mention the calculus for {\bf S} by Kushida\cite{Kushida},
which we call {\Kushida}.
Written in our notation, the calculus {\Kushida}
consists of {\LK}, {\LKS}, and the four rules below.
\[
\infer[(\KushidaGLL)]
{\SSEQ{\Box\varphi, \GA }{\DE}}{
  \SEQ{\varphi, \GA}{\DE}
}  
\quad
\infer[(\KushidaGLR)]
{\SEQ{\Box\GA,\Box\DE}{\Box\varphi}}{
  \SEQ{\Box\GA, \DE, \Box\varphi}{\varphi}
}
\]
\[
\infer[(\RuleLeftBox)]
{\SSEQ{\Box\varphi, \GA }{\DE}}{
  \SSEQ{\varphi, \GA}{\DE}
}  
\quad
\infer[(\KushidaSR)]
{\SSEQ{\Box\GA,\Box\DE}{\Box\varphi}}{
  \SEQ{\Box\GA, \DE, \Box\varphi}{\varphi}
}
\]
Consequently, {\Kushida} is obtained from {\sS} by
replacing the two rules $\{(\RuleGL), (\RuleS)\}$
with the three rules 
$\{(\KushidaGLL), (\KushidaGLR), (\KushidaSR)\}$.
Note that sequents in \cite{Kushida} consist of {\em multisets} of formulas, 
while our sequents consist of {\em sets} of formulas.
However, this difference is not critical
because of the existence of the contraction-rules in \cite{Kushida};
so $\GA, \DE,\ldots$ here  denote sets of formulas.

The following lemma can be easily proved by induction on 
{\Kushida}-proofs.
\begin{lemma}[The rule (\RuleS) is cut-free admissible in {\Kushida}]
\label{lm:Kushida}
If 
$\Kushida \vdash \SEQ{\GA}{\DE}$,
then
$\Kushida \vdash \SSEQ{\GA}{\DE}$.
If 
cut-free $\Kushida \vdash \SEQ{\GA}{\DE}$,
then
cut-free $\Kushida \vdash \SSEQ{\GA}{\DE}$.
\end{lemma}
Then we have the following theorem.
\begin{theorem}[{\Kushida} and {\sS} are equivalent]
\label{th:Kushida_equal_S}
Let ${\blacktriangleright} \in \{\Rightarrow, \Rrightarrow\}$.
$\Kushida \vdash \XSEQ{\GA}{\DE}$
if and only if 
$\sS \vdash \XSEQ{\GA}{\DE}$.
Cut-free $\Kushida \vdash \XSEQ{\GA}{\DE}$
if and only if 
cut-free $\sS \vdash \XSEQ{\GA}{\DE}$.
\end{theorem}
\begin{proof}
Each rule in {\Kushida} is cut-free derivable in {\sS};
for example, the rule (\KushidaSR) is shown below.
\[
\infer[(\RuleS)]{\SSEQ{\Box\GA,\Box\DE}{\Box\varphi}}{
  \infer[(\RuleGL)]{\SEQ{\Box\GA,\Box\DE}{\Box\varphi}}{
    \infer[\text{(w.)}]{\SEQ{\Box\GA,\Box\DE, \GA,\DE,\Box\varphi}{\varphi}}{
      \SEQ{\Box\GA, \DE, \Box\varphi}{\varphi}
    }
  }
}
\]
On the other hand, 
each rule in {\sS} is cut-free admissible in {\Kushida}
(the rule (\RuleS) is shown by the previous lemma, 
and the rule (\RuleGL) is trivial).
Hence, this Theorem~\ref{th:Kushida_equal_S}
is easily shown by induction.
\end{proof}


\section{Syntactic arguments}

In this section, we show some basic properties of our four calculi,
which can be shown by syntactic method (i.e., induction on proofs).

\medskip

While the logics {\bf S} and {\bf D} are proper extensions of {\bf GL},
the sequent calculi {\sS}, {\sDa}, and {\sDb} are conservative extensions of {\sGL} 
with respect to GL-sequents.
Moreover, {\sDb} is a conservative extension of {\sS} 
with respect to S-sequents.
These are shown by the following theorems. 

\begin{theorem}[Conservativity of provability of GL-sequents]
\label{th:GL-conservativity}
The following four conditions are equivalent:
${\sGL}\vdash \SEQ{\GA}{\DE}$;
${\sS}\vdash \SEQ{\GA}{\DE}$;
${\sDa}\vdash \SEQ{\GA}{\DE}$;
and 
${\sDb}\vdash \SEQ{\GA}{\DE}$.
\end{theorem}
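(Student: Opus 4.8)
The plan is to prove the chain of implications $\sGL \vdash \SEQ{\GA}{\DE} \Rightarrow \sS \vdash \SEQ{\GA}{\DE} \Rightarrow \sDb \vdash \SEQ{\GA}{\DE}$ and $\sGL \vdash \SEQ{\GA}{\DE} \Rightarrow \sDa \vdash \SEQ{\GA}{\DE}$ trivially, since each of $\sS$, $\sDa$, $\sDb$ is obtained from $\sGL$ by adding rules and/or new arrow-forms, so every $\sGL$-proof is literally a proof in the larger calculi. The substance is the converse direction: if $\sS$, $\sDa$, or $\sDb$ proves a GL-sequent $\SEQ{\GA}{\DE}$ (one written with the single arrow $\Rightarrow$), then already $\sGL$ proves it. I would handle all three calculi uniformly by induction on the height of the given proof, reading off the last rule applied.

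First I would observe that in any of these calculi, the conclusion of a rule is a GL-sequent only in a controlled set of cases: the initial sequents $\SEQ{\varphi}{\varphi}$ and $\SEQ{\BOT}{}$; the rules of $\LK$ (weakening, $(\IMP\mathrm{L})$, $(\IMP\mathrm{R})$, cut), whose conclusions are GL-sequents exactly when their premises are GL-sequents; and the rule $(\RuleGL)$, whose conclusion $\SEQ{\Box\GA}{\Box\varphi}$ is a GL-sequent and whose single premise $\SEQ{\GA,\Box\GA,\Box\varphi}{\varphi}$ is again a GL-sequent. Crucially, none of the extra rules $(\RuleS)$, $(\RuleLeftBox)$, $(\RuleDa)$, $(\RuleDb)$ can have a GL-sequent as its conclusion, because each of them produces a sequent built with $\Rrightarrow$ or $\Darrow$. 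Hence if the last rule of the given proof has a GL-sequent as conclusion, that rule is an initial sequent, an $\LK$-rule, or $(\RuleGL)$ — all of which are rules of $\sGL$ — and moreover all of its premises are themselves GL-sequents. Applying the induction hypothesis to each premise (which has a strictly shorter proof) gives $\sGL$-proofs of the premises, and reapplying the same rule yields a $\sGL$-proof of $\SEQ{\GA}{\DE}$.

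I would spell this out as a single lemma: for each calculus $\mathbf{C} \in \{\sS, \sDa, \sDb\}$, if $\mathbf{C} \vdash \SEQ{\GA}{\DE}$ then $\sGL \vdash \SEQ{\GA}{\DE}$, proved by induction on proof height with the case analysis above; the argument is literally the same for all three because the only feature used is that the ``new'' rules never output a single-arrow sequent and that the $\sGL$-fragment of each calculus is closed under taking premises of single-arrow conclusions. Combining this with the trivial forward inclusions gives the four-way equivalence.

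The main point to get right — rather than a genuine obstacle — is the bookkeeping verification that the premises of every rule whose conclusion is a GL-sequent are again GL-sequents; this is immediate by inspection of the rule schemas, but it must be checked for each of weakening, $(\IMP\mathrm{L})$, $(\IMP\mathrm{R})$, cut, and $(\RuleGL)$, and one must note explicitly that $(\RuleS)$, $(\RuleLeftBox)$, $(\RuleDa)$, $(\RuleDb)$ are excluded by their output arrow. No cut-elimination or semantic input is needed; the theorem is purely structural.
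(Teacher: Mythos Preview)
Your proposal is correct and is essentially the same argument as the paper's: the paper simply remarks that any $\sS/\sDa/\sDb$-proof of a GL-sequent consists only of rules of $\sGL$, which is exactly the structural observation you spell out (the added rules never output a $\Rightarrow$-sequent, so a straightforward induction on proof height yields the conservativity). You have merely written out in detail what the paper declares trivial.
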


\begin{theorem}[Conservativity of provability of S-sequents]
\label{th:S-conservativity}
The following two conditions are equivalent:
${\sS}\vdash \SSEQ{\GA}{\DE}$
and 
${\sDb}\vdash \SSEQ{\GA}{\DE}$.
\end{theorem}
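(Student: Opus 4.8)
The plan is to prove Theorem~\ref{th:S-conservativity}, the conservativity of \sDb\ over \sS\ with respect to S-sequents. One direction is immediate: since \sDb\ contains all of \sS\ as a subsystem (it adds the rule $(\RuleDb)$ and the $\LKD$ apparatus on top of \sS), any \sS-proof of $\SSEQ{\GA}{\DE}$ is already a \sDb-proof. The content is in the converse: from a \sDb-proof of an S-sequent $\SSEQ{\GA}{\DE}$, produce an \sS-proof of the same sequent. The obvious strategy is induction on the given \sDb-proof, but the difficulty is that a \sDb-proof of an S-sequent may pass through D-sequents: the rule $(\RuleDb)$ turns an S-sequent $\SSEQ{\Box\GA}{\Box\DE}$ into a D-sequent $\DSEQ{\Box\GA}{\Box\DE}$, and then $\LKD$ rules manipulate D-sequents, and there is no rule going back from D-sequents to S-sequents. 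So the real claim must be that any \sDb-derivable D-sequent is already \sS-derivable \emph{as an S-sequent} (i.e. with $\Darrow$ replaced by $\Rrightarrow$), and similarly for GL-sequents. Thus the right statement to prove by induction is the conjunction: (a) $\sDb \vdash \SEQ{\GA}{\DE} \Rightarrow \sGL \vdash \SEQ{\GA}{\DE}$ (already Theorem~\ref{th:GL-conservativity}, which I may assume); (b) $\sDb \vdash \SSEQ{\GA}{\DE} \Rightarrow \sS \vdash \SSEQ{\GA}{\DE}$; and (c) $\sDb \vdash \DSEQ{\GA}{\DE} \Rightarrow \sS \vdash \SSEQ{\GA}{\DE}$.

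First I would set up a simultaneous induction on the height of a \sDb-proof, establishing (b) and (c) together (using Theorem~\ref{th:GL-conservativity} for the GL-sequent side whenever a GL-sequent premise is consumed). For (b): the last rule producing an S-sequent is one of $\LKS$ (initial sequent, weakening, $\IMP$L, $\IMP$R, cut), the rule $(\RuleS)$, or the rule $(\RuleLeftBox)$ — in every case the premises are S-sequents or GL-sequents, so the induction hypothesis plus Theorem~\ref{th:GL-conservativity} applies and the same rule is available in \sS. For (c): the last rule producing a D-sequent is one of $\LKD$ or the rule $(\RuleDb)$. If it is $(\RuleDb)$, the premise is the S-sequent $\SSEQ{\Box\GA}{\Box\DE}$ and the conclusion is $\DSEQ{\Box\GA}{\Box\DE}$; by IH (b) we have $\sS \vdash \SSEQ{\Box\GA}{\Box\DE}$, which is exactly the S-sequent form of the conclusion, so we are done. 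If the last rule is an $\LKD$ rule — weakening, $\IMP$L, $\IMP$R, cut, or an initial D-sequent — I replay the \emph{same} rule in $\LKS$: an initial D-sequent $\DSEQ{\varphi}{\varphi}$ or $\DSEQ{\BOT}{}$ corresponds to the initial S-sequent $\SSEQ{\varphi}{\varphi}$ or $\SSEQ{\BOT}{}$; weakening on D-sequents becomes weakening on S-sequents; and $\IMP$L/$\IMP$R/cut on D-sequents become the corresponding rules on S-sequents, applying IH (c) to each D-sequent premise. Since these rules have the identical shape in $\LKS$ and $\LKD$, the translation is purely mechanical.

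The key observation that makes this work is structural: in \sDb, D-sequents are only ever \emph{introduced} (from S-sequents, via $(\RuleDb)$) and then manipulated purely propositionally by $\LKD$ — there is no rule with an S-sequent conclusion that has a D-sequent premise, and $(\RuleGL)$ never produces or consumes D-sequents. Hence a \sDb-proof, read from the leaves, uses D-sequents only in an "upper layer" that detaches cleanly; translating every D-arrow to an S-arrow throughout that layer turns it into a legitimate \sS-derivation, because every $\LKD$ rule is a verbatim copy of an $\LKS$ rule and the one genuinely modal step $(\RuleDb)$ becomes a trivial identity (its premise already \emph{is} the translated conclusion). I expect the main obstacle — though it is more of a bookkeeping point than a mathematical one — to be stating the simultaneous induction cleanly so that the three clauses (GL, S, D) interlock correctly: in particular one must be careful that when an $\LKD$ rule (say cut) has premises that are D-sequents, the induction hypothesis yields \emph{S}-sequents, and then one needs the corresponding $\LKS$ rule to recombine them — which is fine since $\LKS$ is closed under exactly those rules. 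No cut-elimination is needed here; the argument tolerates cuts because cut is present in both $\LKS$ and $\LKD$ with the same shape.
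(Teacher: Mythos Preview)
Your argument is correct, but it does more work than necessary, and in the process obscures the structural fact that actually carries the proof. You yourself state the key observation: in {\sDb} there is no inference rule whose conclusion is an S-sequent and whose premise is a D-sequent. It follows immediately that any {\sDb}-proof whose end-sequent is an S-sequent contains no D-sequents at all --- every node of such a proof tree is a GL-sequent or an S-sequent, and every rule applied lies in {\sS}. That is the paper's entire argument: the conservativity is ``trivial because any {\sDb}-proofs of S-sequents consist of only the rules of {\sS}.''

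Your clause~(c), namely $\sDb \vdash \DSEQ{\GA}{\DE} \Rightarrow \sS \vdash \SSEQ{\GA}{\DE}$, is therefore not needed for Theorem~\ref{th:S-conservativity}: the hypothetical ``difficulty'' you raise --- that a {\sDb}-proof of an S-sequent might pass through D-sequents --- simply cannot occur. What you have actually done by proving~(c) is to give (essentially the paper's own) proof of the second half of Theorem~\ref{th:D-extension}, and then folded it into an unnecessarily simultaneous induction. Nothing is wrong, but the simultaneous induction suggests a dependency that is not there; clauses~(b) and~(c) are logically independent, and only~(b) is the present theorem.
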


Theorems~\ref{th:GL-conservativity} and \ref{th:S-conservativity}
are trivial because 
any {\sS}/{\sDa}/{\sDb}-proofs of GL-sequents
consist of only the rules of {\sGL},
and 
any {\sDb}-proofs of S-sequents
consist of only the rules of {\sS}.
These two theorems and their cut-free versions
(e.g., 
cut-free \sGL $\vdash \SEQ{\GA}{\DE}$
if and only if 
cut-free \sS $\vdash \SEQ{\GA}{\DE}$)
will be used implicitly from now on.

The fact `both the logics {\bf S} and {\bf D} are extensions of {\bf GL}'
is expressed by the following.

\begin{theorem}
\label{th:GL-extension}
If ${\sGL} \vdash \SEQ{\GA}{\DE}$, then 
we have ${\sS} \vdash \SSEQ{\GA}{\DE}$, 
${\sDa} \vdash \DSEQ{\GA}{\DE}$,
and 
${\sDb} \vdash \DSEQ{\GA}{\DE}$.
\end{theorem}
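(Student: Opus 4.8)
The plan is to prove all three conclusions simultaneously by induction on the structure of the given cut-free (or arbitrary) \sGL-proof of $\SEQ{\GA}{\DE}$, using the rules available in the three target calculi. First I would observe that it suffices to establish the three implications separately; since $\sDb$ contains all rules of $\sS$, and both $\sDa$ and $\sDb$ contain all rules of $\sGL$, the real content is just pushing a $\sGL$-proof up through the relevant arrow-conversion rules. For the $\sS$ case, this is essentially the rule $(\RuleS)$ together with the fact that $\sS$ contains $\LK$ with $\Rrightarrow$: I would argue that $(\RuleS)$ applied to the root of the $\sGL$-proof immediately gives $\SSEQ{\GA}{\DE}$. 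For $\sDa$, one needs a D-sequent, so I would use $\LKD$ and the rule $(\RuleDa)$; for $\sDb$, one uses $(\RuleDb)$ composed with the $\sS$ derivation just obtained.

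In more detail: the cleanest route is to prove the lemma ``if $\sGL \vdash \SEQ{\GA}{\DE}$ then $\sS \vdash \SSEQ{\GA}{\DE}$'' by a one-line appeal to $(\RuleS)$, since $(\RuleS)$ takes a GL-sequent premise $\SEQ{\GA}{\DE}$ to the S-sequent conclusion $\SSEQ{\GA}{\DE}$ with no side conditions. Then ``if $\sS \vdash \SSEQ{\GA}{\DE}$ then $\sDb \vdash \DSEQ{\GA}{\DE}$'' needs a little more care because $(\RuleDb)$ only converts S-sequents of the special boxed shape $\SSEQ{\Box\GA}{\Box\DE}$ into $\DSEQ{\Box\GA}{\Box\DE}$, not arbitrary S-sequents. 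So here I would not apply $(\RuleDb)$ at the root; instead I would do an induction on the $\sS$-proof, translating each rule of $\sS$ (the $\LK$/$\LKS$ rules, $(\RuleGL)$, $(\RuleS)$, $(\RuleLeftBox)$) into the corresponding rule of $\sDb$ on D-sequents. The $\LKD$ rules mirror the $\LKS$ rules; $(\RuleGL)$ is shared; $(\RuleLeftBox)$ is shared; $(\RuleS)$-steps and $(\RuleGL)$-steps produce S-sequents or GL-sequents that sit inside the $\sDb$-proof unchanged. The only genuinely new move is simulating a step that introduces a $\Box$ on both sides, which is exactly where $(\RuleDb)$ is used.

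Alternatively, and perhaps more transparently, I would prove the $\sDa$ and $\sDb$ statements together by one induction on the $\sGL$-proof: a GL-sequent that appears anywhere in a $\sGL$-proof is, a fortiori, provable in $\sDa$ and in $\sDb$ (both contain $\sGL$), and then a single application of $(\RuleDa)$ —after weakening the endsequent into the boxed form $\SEQ{\Box\GA'}{\Box\DE'}$ when the original endsequent already has that shape, which it need not— is where subtlety creeps in. Since the statement only claims $\DSEQ{\GA}{\DE}$ for the \emph{specific} $\GA,\DE$ with $\sGL\vdash\SEQ{\GA}{\DE}$, and $\GA,\DE$ are arbitrary, one cannot in general force them into boxed shape. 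So the correct argument really is: from $\sGL\vdash\SEQ{\GA}{\DE}$ get $\sS\vdash\SSEQ{\GA}{\DE}$ by $(\RuleS)$; this already handles $\sS$. For $\sDa$: I would instead argue via Proposition~\ref{prop:Corespondence-GL} and Proposition~\ref{prop:Corespondence-D}, noting $\HGL\vdash\bigwedge\GA\IMP\bigvee\DE$ implies $\HD\vdash\bigwedge\GA\IMP\bigvee\DE$ (since $\HD$ has all $\HGL$-theorems as axioms), hence $\sDa\vdash\DSEQ{\GA}{\DE}$ and $\sDb\vdash\DSEQ{\GA}{\DE}$ — but this uses Proposition~\ref{prop:Corespondence-D} which is proved later, so if a self-contained syntactic proof is wanted, the induction translating $\sGL$ into $\LKD+(\RuleGL)\subseteq\sDa$ directly is needed.

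The main obstacle is thus the shape restriction on $(\RuleDa)$ and $(\RuleDb)$: these rules only act on endsequents of the form $\SEQ{\Box\GA}{\Box\DE}$, so one cannot simply convert an arbitrary $\sGL$-endsequent by a single rule application. The resolution is to do the induction on the $\sGL$-proof: every intermediate sequent is a GL-sequent, and the whole $\sGL$-derivation is literally a $\sDa$-derivation (respectively a $\sDb$-derivation) already, because $\LK,(\RuleGL)$ are rules of $\sDa$ and of $\sDb$; the point is that we never needed to convert to a D-sequent at all for the \emph{GL-sequent} $\SEQ{\GA}{\DE}$. Rereading the statement, the conclusion is $\DSEQ{\GA}{\DE}$, a D-sequent — so we do need one conversion step at the end. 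The fix: $\LKD$ includes the initial sequents $\SEQ{\varphi}{\varphi}$, $\SEQ{\BOT}{}$ written with $\Darrow$, plus weakening, $(\IMP\mathrm{L})$, $(\IMP\mathrm{R})$ with $\Darrow$; but there is no rule turning a GL-sequent into a D-sequent except through $(\RuleDa)$/$(\RuleDb)$ which need boxed shape. Hence the honest proof is: reproduce the $\sGL$-proof of $\SEQ{\GA}{\DE}$ inside $\sDa$ (it is already one), and separately note that whenever this $\sGL$-proof uses $(\RuleGL)$ to derive some $\SEQ{\Box\SI}{\Box\theta}$ we \emph{could} instead apply $(\RuleDa)$ — but that changes the arrow. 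I would therefore present the clean version as a mutual induction, carefully stating the induction hypothesis so that it yields a $\sDa$-proof of the D-sequent directly by re-deriving each $\LK$-rule with $\LKD$ and simulating $(\RuleGL)$ by $(\RuleDa)$ preceded by the $\sGL$-subproof of its premise (which lives below $(\RuleGL)$ as a GL-sequent proof and hence as a $\sDa$ proof), and I expect the bookkeeping of which sequents stay GL-sequents versus which become D-sequents to be the only place where care is required.
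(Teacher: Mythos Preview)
Your final approach---induction on the $\sGL$-proof, replaying each $\LK$-rule as the corresponding $\LKD$-rule and handling the $(\RuleGL)$ case by keeping the GL-sequent derivation intact and then converting via $(\RuleDa)$ (respectively $(\RuleS)$ followed by $(\RuleDb)$)---is exactly the paper's proof. One sharpening: in the $(\RuleGL)$ case the paper retains the \emph{entire} $\sGL$-proof $P$ of the conclusion $\SEQ{\Box\PI}{\Box\varphi}$, weakens it to $\SEQ{\PI,\Box\PI}{\Box\varphi}$, and applies $(\RuleDa)$ (or applies $(\RuleS)$ then $(\RuleDb)$), so the induction hypothesis is not invoked in that case; your earlier detours through translating $\sS$-proofs rule-by-rule and through the Hilbert-style correspondence are unnecessary.
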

\begin{proof}
Suppose there is a {\sGL}-proof $P$ of $\SEQ{\GA}{\DE}$.
${\sS} \vdash \SSEQ{\GA}{\DE}$ is trivial because of the rule (\RuleS).
On the other hand, 
${\sDa}/{\sDb} \vdash \DSEQ{\GA}{\DE}$
is shown by induction on $P$.
If the last inference rule  of $P$ is (\RuleGL), 
then $\SEQ{\GA}{\DE}$ is of the form $\SEQ{\Box\PI}{\Box\varphi}$,
and we get {\sDa}- and {\sDb}-proofs as below
without using the induction hypothesis.
\[
\infer[(\RuleDa)]{\DSEQ{\Box\PI}{\Box\varphi}}{
 \infer[\mbox{(w.)}]{\SEQ{\PI, \Box\PI}{\Box\varphi}}{
   \infer*[P]{\SEQ{\Box\PI}{\Box\varphi}}{
  }
 }
}
\qquad
\infer[(\RuleDb)]{\DSEQ{\Box\PI}{\Box\varphi}}{
 \infer[(\RuleS)]{\SSEQ{\Box\PI}{\Box\varphi}}{
   \infer*[P]{\SEQ{\Box\PI}{\Box\varphi}}{
  }
 }
}
\]
In other words,
a {\sGL}-proof 
\[
\infer*[\mbox{{\LK}-rules}]{\SEQ{\GA}{\DE}}{
  \infer[(\RuleGL)]{\SEQ{\Box\PI}{\Box\varphi}}{
    \infer*{\SEQ{\PI, \Box\PI,\Box\varphi}{\varphi}}{
    }
  }
}
\]
is translated into the following {\sDa}- and {\sDb}-proofs.
\[
\infer*[\mbox{{\LKD}-rules}]{\DSEQ{\GA}{\DE}}{
 \infer[(\RuleDa)]{\DSEQ{\Box\PI}{\Box\varphi}}{
  \infer[\mbox{(w.)}]{\SEQ{\PI, \Box\PI}{\Box\varphi}}{
   \infer[(\RuleGL)]{\SEQ{\Box\PI}{\Box\varphi}}{
    \infer*{\SEQ{\PI, \Box\PI,\Box\varphi}{\varphi}}{
    }
   }
  }
 }
}
\qquad
\infer*[\mbox{{\LKD}-rules}]{\DSEQ{\GA}{\DE}}{
 \infer[(\RuleDb)]{\DSEQ{\Box\PI}{\Box\varphi}}{
  \infer[(\RuleS)]{\SSEQ{\Box\PI}{\Box\varphi}}{
   \infer[(\RuleGL)]{\SEQ{\Box\PI}{\Box\varphi}}{
    \infer*{\SEQ{\PI, \Box\PI,\Box\varphi}{\varphi}}{
    }
   }
  }
 }
}
\]
\end{proof}

The fact `logic {\bf S} is an extension of {\bf D}'
is expressed by the following theorem.

\begin{theorem}
\label{th:D-extension}
If ${\sDa} \vdash \DSEQ{\GA}{\DE}$, then 
${\sS} \vdash \SSEQ{\GA}{\DE}$.
If ${\sDb} \vdash \DSEQ{\GA}{\DE}$, then 
${\sS} \vdash \SSEQ{\GA}{\DE}$.
\end{theorem}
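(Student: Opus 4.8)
The plan is to prove both implications by a uniform \emph{translation of proofs}. Given a $\sDa$-proof (resp.\ a $\sDb$-proof) of $\DSEQ{\GA}{\DE}$, I would transform it into an $\sS$-proof of $\SSEQ{\GA}{\DE}$ by replacing every D-sequent $\DSEQ{\GA'}{\DE'}$ occurring in it with the S-sequent $\SSEQ{\GA'}{\DE'}$, leaving every GL-sequent (and, in the $\sDb$ case, every S-sequent) untouched, and then checking that the resulting figure can be completed to a legitimate $\sS$-proof. Since the endsequent becomes $\SSEQ{\GA}{\DE}$, this suffices. The verification is an induction on the given proof with one case per rule; the only rules that produce D-sequents are those of $\LKD$ together with $(\RuleDa)$ (in $\sDa$) or $(\RuleDb)$ (in $\sDb$), so those are the cases that carry content.

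The $\sDb$ case is almost immediate. Each rule of $\LKD$ (the initial D-sequents, weakening, the left and right $\IMP$-rules, and cut) becomes under the translation the corresponding rule of $\LKS$, which is available in $\sS$; the part of the proof built from GL- and S-sequents is literally unchanged and is already an $\sS$-derivation; and an application of $(\RuleDb)$, which goes from $\SSEQ{\Box\GA'}{\Box\DE'}$ to $\DSEQ{\Box\GA'}{\Box\DE'}$, becomes an inference whose premise and conclusion coincide, so it can simply be deleted. Hence the translated proof is already an $\sS$-proof of $\SSEQ{\GA}{\DE}$.

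For the $\sDa$ case the same translation works, but one inference needs a short argument. As before $\LKD$ turns into $\LKS$, and the $\sGL$-part of the proof — the only part producing GL-sequents — is unchanged and is an $\sS$-derivation since $\sGL$ is a subsystem of $\sS$. The one nontrivial case is an application of
\[
\infer[(\RuleDa)]{\DSEQ{\Box\PI}{\Box\DE}}{\SEQ{\PI, \Box\PI}{\Box\DE}},
\]
where after translation I must derive $\SSEQ{\Box\PI}{\Box\DE}$ in $\sS$ from the (unchanged) GL-sequent $\SEQ{\PI, \Box\PI}{\Box\DE}$. I would do this by applying $(\RuleS)$ to obtain $\SSEQ{\PI, \Box\PI}{\Box\DE}$ and then applying $(\RuleLeftBox)$ repeatedly, once for each $\gamma \in \PI$: each such step rewrites an unboxed $\gamma$ in the antecedent to $\Box\gamma$, which already lies in $\Box\PI$, so the antecedent does not grow; after exhausting $\PI$ the antecedent is contained in $\Box\PI$, and a final weakening (a rule of $\LKS$) yields $\SSEQ{\Box\PI}{\Box\DE}$.

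The main obstacle is precisely this last case, i.e.\ checking that $(\RuleLeftBox)$ really can be iterated to absorb all of $\PI$ into $\Box\PI$; this is a small side-induction on the number of formulas in $\PI$, and the only delicate point is routine bookkeeping with antecedents taken as \emph{sets}, so that duplicated formulas and elements of $\PI$ that happen to be already boxed are handled correctly. Everything else — matching $\LKD$-rules with $\LKS$-rules, and noting which rule can produce which kind of sequent — is mechanical. (As a sanity check, both implications also follow a posteriori from Propositions~\ref{prop:Corespondence-GL}–\ref{prop:Corespondence-D} once those are available, but the intent here is to argue purely syntactically.)
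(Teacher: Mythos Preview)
Your proposal is correct and follows essentially the same approach as the paper: both argue by induction on the given proof, translating D-sequents to S-sequents, and in the key $(\RuleDa)$ case both apply $(\RuleS)$ followed by iterated $(\RuleLeftBox)$ to pass from $\SSEQ{\PI,\Box\PI}{\Box\DE}$ to $\SSEQ{\Box\PI}{\Box\DE}$. Your treatment is slightly more explicit about the iteration and the set-based bookkeeping, but the argument is the same.
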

\begin{proof}
Roughly speaking, we translate 
the {\sDa}- and {\sDb}-proofs
\[
\infer*[\mbox{{\LKD}-rules}]{\DSEQ{\GA}{\DE}}{
  \infer[(\RuleDa)]{\DSEQ{\Box\PS}{\Box\PH}}{
    \infer*{\SEQ{\PS, \Box\PS}{\Box\PH}}{
    }
  }
}
\qquad
\infer*[\mbox{{\LKD}-rules}]{\DSEQ{\GA}{\DE}}{
  \infer[(\RuleDb)]{\DSEQ{\Box\PS}{\Box\PH}}{
    \infer*{\SSEQ{\Box\PS}{\Box\PH}}{
    }
  }
}
\]
into the following {\sS}-proofs respectively.
\[
\infer*[\mbox{{\LKS}-rules}]{\SSEQ{\GA}{\DE}}{
  \infer[(\RuleLeftBox)]{\SSEQ{\Box\PS}{\Box\PH}}{
   \infer[\text{(\RuleS)}]{\SSEQ{\PS, \Box\PS}{\Box\PH}}{  
    \infer*{\SEQ{\PS, \Box\PS}{\Box\PH}}{
    }
   }
  }
}
\qquad
\infer*[\mbox{{\LKS}-rules}]{\SSEQ{\GA}{\DE}}{
    \infer*{\SSEQ{\Box\PS}{\Box\PH}}{
    }
}
\]
To be precise, we show this Theorem~\ref{th:D-extension}
by induction on the {\sDa}- and {\sDb}-proofs
of $\DSEQ{\GA}{\DE}$.
\end{proof}

The following theorem is expected to be hold as a matter of course.

\begin{theorem}[Equivalence between {\sDa} and {\sDb}]
\label{th:sDa-sDb}
${\sDa}\vdash \DSEQ{\GA}{\DE}$
if and only if
${\sDb}\vdash \DSEQ{\GA}{\DE}$.
\end{theorem}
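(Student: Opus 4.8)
The plan is to prove the two implications separately. The direction ``${\sDa}\vdash \DSEQ{\GA}{\DE}$ implies ${\sDb}\vdash \DSEQ{\GA}{\DE}$'' should be the easier one, and I would handle it first by induction on a cut-free (or arbitrary) {\sDa}-proof $P$ of $\DSEQ{\GA}{\DE}$. The only rule that genuinely distinguishes {\sDa} from {\sDb} is $(\RuleDa)$, so all cases except that one are routine: the {\LKD}-rules are common to both calculi, and an application of $(\RuleGL)$ inside the GL-part is handled by Theorem~\ref{th:GL-extension} (or simply by noting that the GL-subproof is a {\sGL}-proof, which {\sDb} contains). For the $(\RuleDa)$ case, where the conclusion is $\DSEQ{\Box\PS}{\Box\PH}$ with premise $\SEQ{\PS,\Box\PS}{\Box\PH}$, I would mimic the construction in the proof of Theorem~\ref{th:D-extension}: apply $(\RuleS)$ to lift the GL-sequent to $\SSEQ{\PS,\Box\PS}{\Box\PH}$, then $(\RuleLeftBox)$ repeatedly to absorb $\Box\PS$ into $\Box\PS$ on the left, obtaining $\SSEQ{\Box\PS}{\Box\PH}$, and finally $(\RuleDb)$ to reach $\DSEQ{\Box\PS}{\Box\PH}$. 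This shows every {\sDa}-derivable D-sequent is {\sDb}-derivable.

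The converse direction, ``${\sDb}\vdash \DSEQ{\GA}{\DE}$ implies ${\sDa}\vdash \DSEQ{\GA}{\DE}$'', is where the real work lies, and I expect it to be the main obstacle. One cannot simply do a naive induction on a {\sDb}-proof, because {\sDb} introduces S-sequents as an intermediate layer via $(\RuleS)$, $(\RuleLeftBox)$, and the rules of {\LKS}, and these S-sequents have no direct counterpart inside {\sDa}. The key structural observation is that an S-sequent can only enter a {\sDb}-proof of a D-sequent through a subderivation whose conclusion is fed into a $(\RuleDb)$ application, i.e. of the form $\SSEQ{\Box\PS}{\Box\PH}$; so it suffices to analyze such ``$(\RuleDb)$-blocks''. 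The cleanest route, I think, is to invoke Proposition~\ref{prop:Corespondence-S}: if ${\sS}\vdash\SSEQ{\Box\PS}{\Box\PH}$ then ${\HS}\vdash\bigwedge\Box\PS\IMP\bigvee\Box\PH$. I would then want to show that whenever ${\HS}$ proves an implication between boxed formulas of this shape, the corresponding GL-sequent $\SEQ{\PS,\Box\PS}{\Box\PH}$ is already {\sGL}-provable; intuitively this reflects the fact that ${\bf S}$ and ${\bf GL}$ agree on formulas of the form $\bigwedge\Box\PS\IMP\bigvee\Box\PH$ (the ``$\Sigma$-conservativity''-style phenomenon built into $(\RuleDa)$). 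Granting that, an application of $(\RuleDa)$ to $\SEQ{\PS,\Box\PS}{\Box\PH}$ yields $\DSEQ{\Box\PS}{\Box\PH}$ in {\sDa}, and an induction on the {\sDb}-proof — replacing each maximal $(\RuleDb)$-block by this {\sDa}-derivation and copying the {\LKD}- and $(\RuleGL)$-steps verbatim — completes the argument.

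An alternative, more self-contained route avoids the Hilbert detour: prove directly, by induction on an {\sS}-proof, that if ${\sS}\vdash\SSEQ{\Box\PS}{\Box\PH}$ then ${\sGL}\vdash\SEQ{\PS,\Box\PS}{\Box\PH}$. Here one would trace how the S-sequent $\SSEQ{\Box\PS}{\Box\PH}$ was built: applications of $(\RuleLeftBox)$ just remove a box from an antecedent formula that is already present in boxed form, so they are harmless for the target GL-sequent; applications of $(\RuleS)$ give a {\sGL}-proof of the underlying GL-sequent outright; and the {\LKS}-rules mirror {\LK}-rules on the GL-side. The delicate point is that S-sequents are not in general restricted to boxed formulas mid-proof, so one needs a suitably general induction hypothesis covering arbitrary S-sequents $\SSEQ{\GA}{\DE}$ and stating that ${\sGL}\vdash\SEQ{\GA',\GA}{\DE}$ where $\GA'$ collects the ``unboxed witnesses'' — essentially replaying Kushida's analysis of the {\bf S}-calculus. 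Either way, the heart of the matter is the same lemma: \emph{provability of a boxed-on-both-sides S-sequent in {\sS} collapses to provability of the associated GL-sequent in {\sGL}}, and once that is in place both inclusions between {\sDa} and {\sDb} fall out by straightforward induction on proofs. I would present the Hilbert-style route as the main line since it is shorter and leans on the already-cited completeness facts, relegating the purely syntactic alternative to a remark.
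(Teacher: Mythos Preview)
Your only-if direction is fine and matches the paper. The if-direction, however, rests on a lemma that is simply false. You claim (in both your ``Hilbert route'' and your ``syntactic route'') that if ${\sS}\vdash\SSEQ{\Box\PS}{\Box\PH}$ then ${\sGL}\vdash\SEQ{\PS,\Box\PS}{\Box\PH}$, equivalently that ${\bf S}$ and ${\bf GL}$ agree on formulas of the form $\bigwedge\Box\PS\IMP\bigvee\Box\PH$. They do not. Take $\PS=\{\Box\Box p\}$ and $\PH=\{p\}$: in {\sS} one derives $\SSEQ{\Box\Box\Box p}{\Box p}$ from the initial sequent $\SSEQ{\Box p}{\Box p}$ by two applications of $(\RuleLeftBox)$, yet the paper itself (Theorem~\ref{th:failure-of-cuteli}) exhibits a two-world GL-model witnessing ${\sGL}\not\vdash\SEQ{\Box\Box p,\Box\Box\Box p}{\Box p}$. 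Indeed, your lemma would make $\DSEQ{\Box\PS}{\Box\PH}$ derivable in {\sDa} by a \emph{single} application of $(\RuleDa)$, hence cut-free --- contradicting the failure of cut-elimination for {\sDa}.

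The paper's actual route is more delicate. From ${\sS}\vdash\SSEQ{\Box\PS}{\Box\PH}$ one only extracts (Lemma~\ref{lm:for-sDb-sDa-I}) a finite set $\SI$ with ${\sGL}\vdash\SEQ{{\rm ref}(\SI),\Box\PS}{\Box\PH}$, where ${\rm ref}(\SI)=\{\Box\sigma\IMP\sigma\mid\sigma\in\SI\}$. One cannot then absorb ${\rm ref}(\SI)$ into the antecedent $\PS,\Box\PS$; instead (Lemma~\ref{lm:for-sDb-sDa-II}) one inverts each $\Box\sigma_i\IMP\sigma_i$ to obtain, for every subset $\SI'\subseteq\SI$, a GL-sequent $\SEQ{\SI',\Box\PS}{\Box\PH,\Box(\SI\setminus\SI')}$, applies $(\RuleDa)$ to each of these $2^{|\SI|}$ sequents, and then combines the resulting D-sequents by cuts on the formulas $\Box\sigma_i$. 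The essential use of cut here is unavoidable, and your proposal misses exactly this mechanism.
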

The only-if part is easily shown by induction on 
{\sDa}-proofs of $\DSEQ{\GA}{\DE}$;
we use the rules (\RuleS), (\RuleLeftBox), and (\RuleDb)
if the last inference rule is (\RuleDa).
On the other hand, 
the if-part is not trivial because
of the existence of S-sequents in {\sDb}-proofs.
We need some lemmas as below.
In the following, 
${\rm ref}(\SI)$ will denote 
the set $\{\Box\sigma \IMP \sigma \mid \sigma \in \SI\}$
(the name `ref' comes from `reflection').

\begin{lemma}
\label{lm:for-sDb-sDa-I}
If ${\sS} \vdash \SSEQ{\GA}{\DE}$, 
then there is a finite set $\SI$ of formulas such that 
${\sGL} \vdash \SEQ{{\rm ref}(\SI), \GA}{\DE}$.
\end{lemma}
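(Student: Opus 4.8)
<br>

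The goal is Lemma~\ref{lm:for-sDb-sDa-I}: from an \sS-proof of an S-sequent $\SSEQ{\GA}{\DE}$, extract a finite set $\SI$ with $\sGL \vdash \SEQ{{\rm ref}(\SI), \GA}{\DE}$, where ${\rm ref}(\SI) = \{\Box\sigma \IMP \sigma \mid \sigma \in \SI\}$.

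\medskip

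\textbf{Plan of proof.} I would argue by induction on the given \sS-proof $P$ of $\SSEQ{\GA}{\DE}$. Since \sS\ uses only GL-sequents and S-sequents, the last rule of $P$ is one of: an initial sequent (but initial sequents are GL-sequents, so this case is subsumed under $(\RuleS)$ or handled directly), a rule of \LKS\ (weakening, $(\IMP\mathrm{L})$, $(\IMP\mathrm{R})$, cut) applied to S-sequents, the rule $(\RuleS)$, or the rule $(\RuleLeftBox)$. For each I produce the witness $\SI$ and the \sGL-derivation.

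\medskip

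\textbf{Key cases.} If $P$ ends with $(\RuleS)$, the premise is a GL-sequent $\SEQ{\GA}{\DE}$ already provable in \sGL, so take $\SI = \emptyset$ and ${\rm ref}(\emptyset) = \emptyset$; the \sGL-proof is the premise itself (possibly with a weakening). If $P$ ends with $(\RuleLeftBox)$, so $\GA = \Box\varphi, \GA'$ and the premise is $\SSEQ{\varphi, \GA'}{\DE}$, the induction hypothesis gives a finite $\SI_0$ with $\sGL \vdash \SEQ{{\rm ref}(\SI_0), \varphi, \GA'}{\DE}$; I set $\SI = \SI_0 \cup \{\varphi\}$ and then need $\sGL \vdash \SEQ{{\rm ref}(\SI_0), (\Box\varphi \IMP \varphi), \Box\varphi, \GA'}{\DE}$, which follows by one application of $(\IMP\mathrm{L})$ on $\Box\varphi \IMP \varphi$ against the induction hypothesis (the left premise $\SEQ{\ldots}{\DE, \Box\varphi}$ is obtained from $\SEQ{\Box\varphi}{\Box\varphi}$ by weakening). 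For the \LKS\ rules, I combine the witnesses from the premises (union of the finite sets) and mimic the same rule in \sGL: the only subtlety is that ${\rm ref}(\SI)$ must be added uniformly to the antecedents, which is harmless since \LK-rules tolerate extra side formulas in the context, and I may need one weakening step to bring the antecedent ${\rm ref}(\SI_1) \cup {\rm ref}(\SI_2)$ into both branches before applying $(\IMP\mathrm{L})$ or $(\mathrm{cut})$.

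\medskip

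\textbf{Main obstacle.} The delicate point is the $(\RuleLeftBox)$ case, because it is exactly where the ``reflection'' axiom $\Box\varphi \IMP \varphi$ must be introduced and accumulated into $\SI$; I must check that the boxed formula $\Box\varphi$ stays in the antecedent on the \sGL\ side (it is a side formula, not consumed) while its ``unboxing'' is simulated purely by cutting/using $\Box\varphi \IMP \varphi$ from ${\rm ref}(\SI)$. A second, minor bookkeeping obstacle is handling the cut rule of \LKS: the cut formula $\psi$ appears on the right in one premise and the left in the other, and I must ensure that adding ${\rm ref}(\SI_1 \cup \SI_2)$ to both contexts does not interfere — it does not, since cut in \LK\ is insensitive to extra context formulas, so after weakening both \sGL-premises to share the antecedent ${\rm ref}(\SI_1\cup\SI_2), \GA$ I apply $(\mathrm{cut})$ directly. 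Everything else is a routine transcription of \LK-rules, so the proof is short once these two cases are set up correctly.
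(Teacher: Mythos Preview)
Your proposal is correct and takes essentially the same approach as the paper: induction on the \sS-proof, with the only interesting case being $(\RuleLeftBox)$, handled exactly as you describe by enlarging $\SI$ with $\varphi$ and using $(\IMP\mathrm{L})$ together with the initial sequent $\SEQ{\Box\varphi}{\Box\varphi}$. The paper in fact only writes out the $(\RuleLeftBox)$ case and leaves the remaining cases (initial S-sequents, $(\RuleS)$, and the \LKS\ rules) implicit; your treatment of those---taking $\SI=\emptyset$ for $(\RuleS)$ and unioning the witnesses for two-premise rules---is the intended routine argument. One tiny wording slip: the initial sequents of \LKS\ are S-sequents, not GL-sequents, but as you note they are ``handled directly'' by taking $\SI=\emptyset$ and using the corresponding \sGL\ initial sequent.
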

\begin{proof}
By induction on the {\sS}-proof $P$ of $\SSEQ{\GA}{\DE}$.
If $P$ is 
\[
\infer[(\RuleLeftBox)]{\SSEQ{\Box\varphi, \PI}{\DE}}{
  \infer*[P']{\SSEQ{\varphi, \PI}{\DE}}{
  }
}
\]
then we have 
\[
\infer=[\mbox{(w.)($\IMP$left)}]{\SEQ{\Box\varphi \IMP \varphi, {\rm ref}(\SI'), \Box\varphi, \PI}{\DE}.}{
  \SEQ{\Box\varphi}{\Box\varphi}
  &
  \infer*[\mbox{i.h. for $P'$}]{\SEQ{{\rm ref}(\SI'), \varphi, \PI}{\DE}}{  
  }
}
\]
\end{proof}

\begin{lemma}
\label{lm:for-sDb-sDa-II}
If ${\sGL} \vdash \SEQ{{\rm ref}(\SI), \Box\PS}{\Box\PH}$,
then 
${\sDa} \vdash \DSEQ{\Box\PS}{\Box\PH}$.
\end{lemma}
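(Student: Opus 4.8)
The plan is to prove the slightly more flexible statement by induction on the size of the finite set $\SI$, reducing it step by step to the base case $\SI = \emptyset$, which is handled directly by the rule $(\RuleDa)$. First I would observe that when $\SI = \emptyset$ the hypothesis reads ${\sGL}\vdash\SEQ{\Box\PS}{\Box\PH}$, and then by weakening ${\sGL}\vdash\SEQ{\PS,\Box\PS}{\Box\PH}$, so one application of $(\RuleDa)$ gives ${\sDa}\vdash\DSEQ{\Box\PS}{\Box\PH}$. For the inductive step, suppose $\sigma$ is an element of $\SI$ and write $\SI = \SI_0 \uplus \{\sigma\}$; the hypothesis is ${\sGL}\vdash\SEQ{\Box\sigma\IMP\sigma,\ {\rm ref}(\SI_0),\ \Box\PS}{\Box\PH}$. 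I want to absorb the reflection axiom $\Box\sigma\IMP\sigma$ into the boxed context so that the induction hypothesis applies.

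The key move is to push $\sigma$ inside a box. From ${\sGL}\vdash\SEQ{\Box\sigma\IMP\sigma,\ {\rm ref}(\SI_0),\ \Box\PS}{\Box\PH}$ one gets, by $(\IMP\text{L})$ split into the two premises $\SEQ{{\rm ref}(\SI_0),\Box\PS}{\DE,\Box\sigma}$-style pieces, a $\sGL$-derivation of $\SEQ{{\rm ref}(\SI_0),\ \sigma,\ \Box\PS}{\Box\PH}$ together with a $\sGL$-derivation of $\SEQ{{\rm ref}(\SI_0),\ \Box\PS}{\Box\PH,\ \Box\sigma}$; more directly, using cut in $\sGL$ (which is available), it suffices to establish ${\sGL}\vdash\SEQ{{\rm ref}(\SI_0),\ \Box\PS}{\Box\PH,\ \Box\sigma}$ and then feed it to the induction hypothesis with the enlarged boxed succedent $\Box\PH,\Box\sigma$. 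To obtain $\SEQ{{\rm ref}(\SI_0),\Box\PS}{\Box\PH,\Box\sigma}$, apply the $\sGL$-rule $(\RuleGL)$-style reasoning: the point is that inside the scope of a box, reflection principles for the boxed formulas become derivable in $\sGL$ from the $(\RuleGL)$ rule (this is the standard fact that $\sGL$ proves $\SEQ{}{\Box(\Box\sigma\IMP\sigma)\IMP\Box\sigma}$ and hence $\Box\PS,\ldots$ can discharge $\Box\sigma\IMP\sigma$ after one boxing step). So by applying $(\RuleGL)$ to the premise $\SEQ{\PS,\Box\PS,\Box\sigma,\Box\PH}{?}$ and massaging with weakening and $(\IMP\text{L})$ on $\Box\sigma\IMP\sigma$ — whose antecedent $\Box\sigma$ is now present in the context under the box — I can cancel the reflection axiom for $\sigma$ at the cost of adding $\Box\sigma$ to the succedent, which then merges harmlessly into $\Box\PH$. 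After this, the induction hypothesis (applied with the set $\SI_0$ and succedent $\Box\PH,\Box\sigma$) yields ${\sDa}\vdash\DSEQ{\Box\PS}{\Box\PH,\Box\sigma}$, and a final $\sDa$-step using $(\OR\text{R})$/weakening in $\LKD$ absorbs $\Box\sigma$ back, giving ${\sDa}\vdash\DSEQ{\Box\PS}{\Box\PH}$ — wait, that last absorption is not free, so instead I would arrange the induction to carry the succedent $\Box\PH$ fixed and only temporarily enlarge it, concluding $\DSEQ{\Box\PS}{\Box\PH}$ directly once the last reflection formula is gone.

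The step I expect to be the main obstacle is exactly the "boxing" move: showing that ${\sGL}\vdash\SEQ{\Box\sigma\IMP\sigma,\ {\rm ref}(\SI_0),\ \Box\PS}{\Box\PH}$ entails ${\sGL}\vdash\SEQ{{\rm ref}(\SI_0),\ \Box\PS,\ \Box\sigma}{\Box\PH}$, i.e. that one reflection premise can be traded for having its box in the antecedent, inside the $(\RuleGL)$ context. This requires care because the hypothesis is about a $\sGL$-provable sequent with $\Box\sigma\IMP\sigma$ among the antecedents, and I need to re-derive it after $(\RuleGL)$ has unpacked all the boxes — the premise of $(\RuleGL)$ reintroduces $\Box\PS,\Box\PH$ (and now $\Box\sigma$) as well as the unboxed $\PS$, at which point the cut rule of $\sGL$ together with the hypothesis closes the gap. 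I would also need to double-check that $\Box\PH$ really is of the form $\Box(\text{something})$ so that $(\RuleGL)$ and $(\RuleDa)$ are applicable — which is guaranteed since the conclusion $\DSEQ{\Box\PS}{\Box\PH}$ is already boxed on both sides. The remaining bookkeeping (weakening, the propositional manipulations in $\LK$, and the merging of $\Box\sigma$ into $\Box\PH$) is routine.
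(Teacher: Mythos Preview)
Your inductive scheme can be made to work, but the plan as written misses the essential step and spends its effort on a red herring.

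You correctly obtain from $(\IMP{\rm L}^{-1})$ the two branches
\[
\text{(a)}\ {\sGL}\vdash\SEQ{{\rm ref}(\SI_0),\,\Box\PS}{\Box\PH,\,\Box\sigma}
\qquad\text{and}\qquad
\text{(b)}\ {\sGL}\vdash\SEQ{\sigma,\,{\rm ref}(\SI_0),\,\Box\PS}{\Box\PH},
\]
but you never combine them. The point is that the induction hypothesis, applied to (a), gives ${\sDa}\vdash\DSEQ{\Box\PS}{\Box\PH,\Box\sigma}$, and applied (after weakening with $\Box\sigma$) to (b) gives ${\sDa}\vdash\DSEQ{\Box\sigma,\Box\PS}{\Box\PH}$; a single cut on $\Box\sigma$ in ${\LKD}$ then yields ${\sDa}\vdash\DSEQ{\Box\PS}{\Box\PH}$. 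Your sentence ``that last absorption is not free'' is exactly right --- neither branch alone suffices --- and the cut on $\Box\sigma$ is what makes the absorption free. There is one further wrinkle: branch (b) carries an unboxed $\sigma$ in the antecedent, which does not match the shape $\SEQ{{\rm ref}(\SI_0),\Box\PS'}{\Box\PH}$ required by the lemma as stated. To make the induction go through you must strengthen the hypothesis to ${\sGL}\vdash\SEQ{{\rm ref}(\SI),\PS,\Box\PS}{\Box\PH}$ (the original lemma then follows by an initial weakening), so that in (b) the extra $\sigma$ is absorbed by enlarging $\PS$ to $\PS\cup\{\sigma\}$.

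Your ``boxing move'' --- replacing $\Box\sigma\IMP\sigma$ by $\Box\sigma$ in the antecedent while staying inside $\sGL$ --- is not justified by the sketch you give: pushing (b) under a box via $(\RuleGL)$ and the 4-axiom only yields $\SEQ{\Box\sigma,\Box\PS}{\Box\bigvee\Box\PH}$, and $\Box\bigvee\Box\PH\IMP\bigvee\Box\PH$ is precisely the {\bf D}-principle that $\sGL$ does \emph{not} prove. Even if the move were valid, its conclusion $\DSEQ{\Box\sigma,\Box\PS}{\Box\PH}$ would still need to be cut against (a).

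For comparison, the paper avoids the induction altogether: it applies $(\IMP{\rm L}^{-1})$ to all $n$ reflection formulas at once, obtaining for every $\SI'\subseteq\SI$ the sequent ${\sGL}\vdash\SEQ{\SI',\Box\PS}{\Box\PH,\Box(\SI\setminus\SI')}$, then weakens with $\Box\SI'$, applies $(\RuleDa)$ to get ${\sDa}\vdash\DSEQ{\Box\SI',\Box\PS}{\Box\PH,\Box(\SI\setminus\SI')}$, and finally cuts all $2^n$ D-sequents together on the $\Box\sigma_i$. This is the same cut-on-$\Box\sigma$ idea, done globally rather than one reflection formula at a time, and it sidesteps the need to strengthen the induction hypothesis.
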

\begin{proof}
It is easy to show that 
{\sGL} has the following property, which we call ($\IMP$left$^{-1}$):
\begin{em}
If ${\sGL} \vdash \SEQ{\varphi\IMP\psi, \GA}{\DE}$,
then 
${\sGL} \vdash \SEQ{\GA}{\DE, \varphi}$
and 
${\sGL} \vdash \SEQ{\psi, \GA}{\DE}$.
\end{em}
Now  
assume 
${\sGL} \vdash \SEQ{{\rm ref}(\SI), \Box\PS}{\Box\PH}$ 
where
$\SI = \{\sigma_1, \sigma_2, \ldots, \sigma_n\}$.
For any subset $\SI' \subseteq \SI$,
we have 
${\sDa} \vdash \DSEQ{\Box\SI', \Box\PS}{\Box\PH, \Box(\SI\setminus\SI')}$
as below.
\begin {align*}
&{\sGL} \vdash \SEQ{\Box\sigma_1\IMP\sigma_1, \ldots, \Box\sigma_n\IMP\sigma_n, \Box\PS}{\Box\PH}
\tag{Assumption of the Lemma}
\\
&{\sGL} \vdash \SEQ{\Sigma', \Box\PS}{\Box\PH, \Box(\SI\setminus\SI')}
\tag{By $n$-times application of ($\IMP$left$^{-1}$)}
\\
&{\sDa} \vdash \DSEQ{\Box\Sigma', \Box\PS}{\Box\PH, \Box(\SI\setminus\SI')}
\tag{By weakening and (\RuleDa)}
\end{align*}
Then, by using the cut rule
to the $2^n$ D-sequents,
we have ${\sDa} \vdash \DSEQ{\Box\PS}{\Box\PH}$.
For example, if $n=2$, the proof is below.
\[
\infer[\text{(cut)}]{\DSEQ{\Box\PS}{\Box\PH}}{
  \infer[\text{(cut)}]{\DSEQ{\Box\PS}{\Box\PH, \Box\sigma_1}}{
    \DSEQ{\Box\PS}{\Box\PH, \Box\sigma_1, \Box\sigma_2}
    &
    \DSEQ{\Box\sigma_2, \Box\PS}{\Box\PH, \Box\sigma_1}    
  }
  &
  \infer[\text{(cut)}]{\DSEQ{\Box\sigma_1, \Box\PS}{\Box\PH}}{
    \DSEQ{\Box\sigma_1, \Box\PS}{\Box\PH, \Box\sigma_2}
    &
    \DSEQ{\Box\sigma_1, \Box\sigma_2, \Box\PS}{\Box\PH}    
  }
}
\]
\end{proof}

\begin{proof}[Proof of if-part of Theorem~\ref{th:sDa-sDb}]
By induction on {\sDb}-proof $P$ of $\DSEQ{\GA}{\DE}$.
If $P$ is of the form
\[
\infer[\text{(\RuleDb)}]{\DSEQ{\Box\PS}{\Box\PH}}{
  \infer*[P']{\SSEQ{\Box\PS}{\Box\PH}}{}
}
\]
then we have 
${\sDa} \vdash \DSEQ{\Box\PS}{\Box\PH}$
by Lemmas~\ref{lm:for-sDb-sDa-I} and \ref{lm:for-sDb-sDa-II}.
\end{proof}

Two calculi {\sDa} and {\sDb} are equivalent as above;
however, they are not equivalent with respect to cut-eliminability.

\begin{theorem}[Cut-elimination for {\sDb}]
If $\sDb \vdash \DSEQ{\PS}{\PH}$,
then cut-free $\sDb \vdash \DSEQ{\PS}{\PH}$.
\end{theorem}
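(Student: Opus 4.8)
The plan is to prove cut-elimination for $\sDb$ by reducing it to cut-elimination for the two component calculi $\sGL$ and $\sS$, which are known (the latter by Kushida \cite{Kushida}), together with the structural reductions for $\LKD$. The key observation is that a $\sDb$-proof has a layered structure: the $\Darrow$-sequents sit at the bottom, and above any D-sequent the proof consists only of $\sGL$- and $\sS$-rules (because $(\RuleDb)$ is the only rule producing a D-sequent from a non-D-sequent, and it reads an S-sequent as premise). So I would first isolate the "topmost" cuts and classify them by the kind of arrow on the cut sequent.

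First I would handle cuts on $\Rightarrow$-sequents and $\Rrightarrow$-sequents: a cut entirely among GL-sequents is eliminable by cut-elimination for $\sGL$, and a cut entirely among S-sequents is eliminable by cut-elimination for $\sS$ (here Theorems~\ref{th:GL-conservativity} and \ref{th:S-conservativity} guarantee that such subproofs live inside $\sGL$ resp.\ $\sS$, so the known results apply verbatim). The genuinely new case is a cut on a $\Darrow$-sequent,
\[
\infer[\text{(cut)}]{\DSEQ{\GA,\PI}{\DE,\SI}}{
  \DSEQ{\GA}{\DE,\varphi} & \DSEQ{\varphi,\PI}{\SI}.
}
\]
Here I would push the cut upward through the $\LKD$-rules ($\IMP$L), ($\IMP$R), weakening in the usual Gentzen style until both premises are conclusions of $(\RuleDb)$; at that point both have the form $\DSEQ{\Box\PS}{\Box\PH}$, the cut formula is boxed, and the premises of $(\RuleDb)$ are S-sequents $\SSEQ{\Box\PS_1}{\Box\PH_1}$ and $\SSEQ{\Box\PS_2}{\Box\PH_2}$. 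The clean way to close this is to use the already-established equivalence of $\sDa$ and $\sDb$ (Theorem~\ref{th:sDa-sDb}): translate both premises into $\sDa$, so they become $(\RuleDa)$-conclusions whose premises are GL-sequents $\SEQ{\PS_i,\Box\PS_i}{\Box\PH_i}$; then a cut between two $(\RuleDa)$-conclusions reduces, via $(\IMP\text{L})$, $(\IMP\text{R})$ and one application of cut-elimination for $\sGL$ on the premises, to a single $(\RuleDa)$-conclusion, which we translate back to $\sDb$ — and crucially this last step introduces no cut, only applications of $(\RuleDa)$/$(\RuleDb)$ and structural rules on GL/S-sequents which are themselves cut-free by the component results.

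More carefully, I expect the cleanest route is a semantic argument instead of the syntactic surgery just sketched: since Section~5 (cited as forthcoming) establishes completeness of \emph{cut-free} $\sDb$ with respect to D-models, and since $\sDb \vdash \DSEQ{\PS}{\PH}$ implies the validity of $\bigwedge\PS \IMP \bigvee\PH$ at the limit of every D-model via soundness, cut-freeness follows immediately. I would present the theorem with a one-line proof deferring to the semantic cut-elimination of Section~5, and separately (or in an appendix) give the syntactic reduction above for readers who want a constructive proof; the excerpt's own remark that "the syntactic cut-elimination is reduced to that for {\bf S} by \cite{Kushida}" signals that this is the intended shape.

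The main obstacle is the permutation of a D-cut past the $(\RuleDb)$ rule. Unlike the modal rule $(\RuleGL)$, whose premise repeats the principal boxed formulas in the antecedent (so Gentzen-style reduction of a cut on a boxed formula works), the rule $(\RuleDb)$ simply \emph{changes the arrow} from $\Rrightarrow$ to $\Darrow$ on an already all-boxed sequent, so a cut on a boxed formula $\Box\chi$ between two $(\RuleDb)$-conclusions must be resolved one level up, among S-sequents — and $\sS$ itself is not fully cut-free on arbitrary S-sequents in the naive formulation, which is exactly why Kushida's two-sequent formulation and the detour through $\sDa$/Lemmas~\ref{lm:for-sDb-sDa-I}–\ref{lm:for-sDb-sDa-II} are needed. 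Getting the bookkeeping right — that every cut introduced by the ${\rm ref}(\SI)$-manipulation in Lemma~\ref{lm:for-sDb-sDa-II} is a cut on GL-sequents only, hence eliminable by $\sGL$'s cut-elimination, and that no new D-cuts or S-cuts are spawned — is the delicate part, and I would organize the induction with a measure on (number of $(\RuleDb)$-inferences above the cut, then cut-rank) to make the termination transparent.
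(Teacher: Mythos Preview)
Your overall layered picture is correct, and the semantic route you mention is indeed available (Theorem~\ref{th:cut-free-completeness-Db}). But the syntactic argument you sketch contains a genuine error, rooted in a misconception about $\sS$.

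You write that ``$\sS$ itself is not fully cut-free on arbitrary S-sequents in the naive formulation'' and that this forces a detour through $\sDa$ and Lemmas~\ref{lm:for-sDb-sDa-I}--\ref{lm:for-sDb-sDa-II}. This is false: $\sS$ \emph{is} fully cut-eliminable (the paper cites \cite{Kushida, Beklemishev87, KK} for exactly this). Consequently the ``principal'' case, where both premises of a D-cut come from $(\RuleDb)$, is almost trivial: simply push the cut up one level to an S-cut,
\[
\infer[(\RuleDb)]{\DSEQ{\Box\GA,\Box\PI}{\Box\DE,\Box\SI}}{
  \infer[\text{(cut$_{\Rrightarrow}$)}]{\SSEQ{\Box\GA,\Box\PI}{\Box\DE,\Box\SI}}{
    \SSEQ{\Box\GA}{\Box\DE,\Box\varphi} & \SSEQ{\Box\varphi,\Box\PI}{\Box\SI}
  }
}
\]
and invoke cut-elimination for $\sS$ on the new S-cut. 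That is the whole content of the paper's proof; the remaining cases are the standard $\LKD$ permutations under the usual double induction on cut-formula and proof size.

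Worse, your proposed detour through $\sDa$ is not just unnecessary but \emph{fails}. You claim that a cut between two $(\RuleDa)$-conclusions ``reduces, via $(\IMP\text{L})$, $(\IMP\text{R})$ and one application of cut-elimination for $\sGL$ on the premises, to a single $(\RuleDa)$-conclusion.'' Theorem~\ref{th:failure-of-cuteli} is a direct counterexample: the cut between $\DSEQ{\Box\Box\Box p}{\Box\Box p}$ and $\DSEQ{\Box\Box p}{\Box p}$ (both $(\RuleDa)$-conclusions) yields $\DSEQ{\Box\Box\Box p}{\Box p}$, but the would-be single $(\RuleDa)$-premise $\SEQ{\Box\Box p,\Box\Box\Box p}{\Box p}$ is \emph{not} $\sGL$-derivable. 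So the reduction you describe does not exist in general. The translation $\sDb\to\sDa$ of Theorem~\ref{th:sDa-sDb} goes through Lemma~\ref{lm:for-sDb-sDa-II}, which itself introduces $2^n$ D-cuts; it gives you no leverage toward a cut-free $\sDa$-proof.

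In short: drop the $\sDa$-detour entirely, accept that $\sS$ has full cut-elimination, and the syntactic proof collapses to the one-step lift above.
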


\begin{proof}
Combining 
cut-free versions of Theorems \ref{th:GL-conservativity} and \ref{th:S-conservativity}, 
Theorem~\ref{th:Kushida_equal_S}, and 
the syntactic cut-elimination for {\Kushida} 
(shown in \cite{Kushida}), 
we have cut-free admissibility of 
the cut rules for GL- and S-sequents in {\sDb};
that is, we have the following.
\begin{quote}
(cut$_{\Rightarrow}$) \ 
If both $\SEQ{\GA}{\DE, \varphi}$ 
and $\SEQ{\varphi, \PI}{\SI}$ are 
cut-free provable in $\sDb$,
then so is $\SEQ{\GA,\PI}{\DE,\SI}$.

(cut$_{\Rrightarrow}$) \ 
If both $\SSEQ{\GA}{\DE, \varphi}$ 
and $\SSEQ{\varphi, \PI}{\SI}$ are 
cut-free provable in $\sDb$,
then so is $\SSEQ{\GA,\PI}{\DE,\SI}$.
\end{quote}
Then we show this property  for D-sequents;
that is, we eliminate  
\[
\infer[\mbox{(cut$_{\Darrow}$)}]{\DSEQ{\GA,\PI}{\DE,\SI}}{\DSEQ{\GA}{\DE, \varphi} & \DSEQ{\varphi, \PI}{\SI}}
\]
As usual, this is shown by double induction on the length of the
cut-formula $\varphi$ and
sum of the length of the left and right upper proofs.
In the case of 
\[
\infer[\mbox{(cut$_{\Darrow}$)}]{\DSEQ{\Box\GA,\Box\PI}{\Box\DE,\Box\SI}}{
  \infer[({\RuleDb})]{\DSEQ{\Box\GA}{\Box\DE, \Box\varphi}}{
    \infer*[P]{\SSEQ{\Box\GA}{\Box\DE, \Box\varphi}}{}
  }
  & 
  \infer[({\RuleDb})]{\DSEQ{\Box\varphi, \Box\PI}{\Box\SI}}{
    \infer*[Q]{\SSEQ{\Box\varphi, \Box\PI}{\Box\SI}}{}
  }
}
\]
we have the following.
\[
\infer[({\RuleDb})]{\DSEQ{\Box\GA, \Box\PI}{\Box\DE, \Box\SI}}{
  \infer[\mbox{(cut$_\Rrightarrow$)}]{\SSEQ{\Box\GA, \Box\PI}{\Box\DE, \Box\SI}}{
    \infer*[P]{\SSEQ{\Box\GA}{\Box\DE, \Box\varphi}}{
    }
    &
    \infer*[Q]{\SSEQ{\Box\varphi, \Box\PI}{\Box\SI}}{
    }
 }
}
\]
The other cases are easily done by the standard method.
\end{proof}

A semantical proof of the above cut-elimination theorem 
will be given in the next section.

\begin{theorem}[Failure of cut-elimination for {\sDa}]
\label{th:failure-of-cuteli}
There is a D-sequent 
that is provable but not cut-free provable in {\sDa}.
\end{theorem}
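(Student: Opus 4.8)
The plan is to exhibit an explicit D-sequent that is provable in \sDa\ but cannot be derived without cut. A natural candidate comes from the characteristic axiom of {\bf D} about disjunctions of boxes: consider the sequent
\[
\DSEQ{\Box(\Box p \OR \Box q)}{\Box p, q}.
\]
First I would check that this is provable in \sDa\ with cut. The idea is that from $\Box(\Box p \OR \Box q)$ one can extract, reasoning inside the \LKD-fragment together with a cut on the D-provable formula $\Box(\Box p \OR \Box q)\IMP \Box p \OR \Box q$ (which is cut-free provable by Example~\ref{ex:sDa-proof}), the disjunction $\Box p \OR \Box q$, and then from $\Box q$ together with the reflection-like reasoning available for D-sequents one reaches $q$ in the succedent. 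Concretely I would build the proof by first deriving $\DSEQ{\Box(\Box p\OR\Box q)}{\Box p,\Box q}$ as in Example~\ref{ex:sDa-proof}, then cutting against a \sDa-proof of $\DSEQ{\Box q}{q}$; the latter is obtained from $\SEQ{q,\Box q}{q}$ by weakening and $(\RuleDa)$. This establishes one direction.

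The harder direction is to show that no cut-free \sDa-proof of this sequent exists. Here I would argue by analysing the possible shapes of a cut-free \sDa-proof. Because the calculus is cut-free and the only modal rules in \sDa\ are $(\RuleGL)$ and $(\RuleDa)$, every bottom-up derivation step on a D-sequent is either a propositional \LKD-rule, a weakening, or an application of $(\RuleDa)$; and $(\RuleDa)$ requires that \emph{both} the antecedent and the succedent consist entirely of boxed formulas. The point is that our target sequent has the unboxed formula $q$ in the succedent, so $(\RuleDa)$ cannot be applied to it directly; the succedent atom $q$ must be ``built'' by propositional rules from something, but $q$ is not a subformula of anything boxed available in the antecedent, and the subformula property (which holds for cut-free \sDa) forbids introducing it. More carefully: by the subformula property every formula occurring in a cut-free \sDa-proof of $\DSEQ{\Box(\Box p\OR\Box q)}{\Box p, q}$ is a subformula of $\Box(\Box p\OR\Box q)$, $\Box p$, or $q$. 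I would trace how the occurrence of $q$ in the succedent can arise: the only \LKD-rule that could introduce it going upward is ($\OR$R)/weakening structure or it is passed down through $(\RuleDa)$ — but $(\RuleDa)$ switches to a GL-sequent and strips one box, so after $(\RuleDa)$ the succedent is $\{p, q\}$ wait no, $(\RuleDa)$ needs $\Box\DE$ in the conclusion, meaning $q$ would have to be $\Box q'$, contradiction. So the occurrence of $q$ in the succedent can never be discharged, and the leaves would have to be initial sequents of the form $\SEQ{q}{q}$ or $\DSEQ{q}{\ldots}$, which then requires $q$ in the antecedent somewhere on that branch — impossible since $q$ is not a subformula of the antecedent formula and never enters the antecedent.

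The key technical step, and the one I expect to be the main obstacle, is making precise the ``$q$ can never be introduced into the antecedent'' argument, i.e. a careful invariant maintained along every branch of a putative cut-free \sDa-proof. The natural invariant is: on every sequent occurring above the root, the only way a formula containing $q$ as a proper top-level conjunct/disjunct could appear in the antecedent is via $(\RuleDa)$ unboxing $\Box q$ — but $\Box q$ never appears in the antecedent either, since the root antecedent is $\Box(\Box p\OR\Box q)$ and its subformulas are $\Box(\Box p\OR\Box q),\ \Box p\OR\Box q,\ \Box p,\ \Box q,\ p,\ q$, and $\Box q$ can only enter the antecedent of a GL-sequent via the premise of $(\RuleGL)$ or $(\RuleDa)$ acting on a conclusion-antecedent containing $\Box\Box q$ — which is not a subformula of anything. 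Formalizing this ``no pathway'' bookkeeping — essentially showing that the set of antecedent formulas stays within a $q$-free portion of the subformula closure on the relevant branches — is the delicate part; once it is in place, the absence of a cut-free proof, hence the failure of cut-elimination for \sDa, follows, while Theorem~\ref{th:sDa-sDb} guarantees the sequent is still provable (with cut) in \sDa.
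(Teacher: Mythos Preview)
Your proposal has a fatal error at the very first step: the sequent $\DSEQ{\Box(\Box p\OR\Box q)}{\Box p,q}$ is not provable in \sDa\ at all. The mistake is your claim that $\DSEQ{\Box q}{q}$ follows from $\SEQ{q,\Box q}{q}$ by weakening and $(\RuleDa)$. Look at the shape of $(\RuleDa)$: its conclusion is $\DSEQ{\Box\GA}{\Box\DE}$, so the succedent must consist entirely of boxed formulas. The atom $q$ is not boxed, so $(\RuleDa)$ cannot produce $\DSEQ{\Box q}{q}$. In fact $\Box q\IMP q$ is a theorem of {\bf S} but \emph{not} of {\bf D} (the paper notes this explicitly in Section~2), so $\DSEQ{\Box q}{q}$ is unprovable in \sDa, and the cut you propose is unavailable. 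A quick semantic check confirms that your target sequent is false at the limit of a suitable tail-limit-extension (take a one-point GL-model, make $q$ true along the whole tail but false at $t_\omega$, and $p$ false everywhere).

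For comparison, the paper's counterexample is purely modal: $\DSEQ{\Box\Box\Box p}{\Box p}$. Provability with cut is immediate by deriving $\DSEQ{\Box\Box\Box p}{\Box\Box p}$ and $\DSEQ{\Box\Box p}{\Box p}$ (each via a trivial instance of $(\RuleDa)$) and cutting on $\Box\Box p$. Unprovability without cut is then a one-line case analysis on the last rule: $(\RuleDa)$ would require $\sGL\vdash\SEQ{\Box\Box p,\Box\Box\Box p}{\Box p}$, which fails (there is a two-world GL-model refuting it), and the weakening cases are easily dismissed. The moral is that for this theorem you should look for a sequent whose antecedent and succedent are already of the form $\Box\GA$ and $\Box\DE$, so that $(\RuleDa)$ is the only interesting last rule to rule out; introducing an unboxed atom into the succedent, as you did, takes you outside what {\bf D} proves.
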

\begin{proof}
We have a {\sDa}-proof: 
\[
\infer[\text{(cut)}]{\DSEQ{\Box\Box\Box p}{\Box p}}{
  \infer=[\text{(w.)(\RuleDa)}]{\DSEQ{\Box\Box\Box p}{\Box\Box p}}{
    \SEQ{\Box\Box p}{\Box\Box p}
  }
  &
  \infer=[\text{(w.)(\RuleDa)}]{\DSEQ{\Box\Box p}{\Box p}}{
    \SEQ{\Box p}{\Box p}
  }
}
\]
This last sequent is not cut-free provable 
because otherwise
the last inference should be one of the following rules.
\[
\infer[(\RuleDa)]{\DSEQ{\Box\Box\Box p}{\Box p}}{
  \SEQ{\Box\Box p, \Box\Box\Box p}{\Box p}
}
\quad
\infer[\text{(w.)}]{\DSEQ{\Box\Box\Box p}{\Box p}}{
  \DSEQ{}{\Box p}
}
\quad
\infer[\text{(w.)}]{\DSEQ{\Box\Box\Box p}{\Box p}}{
  \DSEQ{\Box\Box\Box p}{}
}
\quad
\infer[\text{(w.)}]{\DSEQ{\Box\Box\Box p}{\Box p}}{
  \DSEQ{}{}
}
\]
But none of these upper sequents are provable.
The fact
${\sGL} \not\vdash \SEQ{\Box\Box p, \Box\Box\Box p}{\Box p}$
can be shown using the soundness of {\sGL}
(i.e., combination of Propositions~\ref{prop:PreliminaryGL} and \ref{prop:Corespondence-GL})
and a GL-model $\Kmodel{\{x,y\}}{\R}{V}$
where $\R = \{(x,y)\}$ and $V(y, p) = \F$ (thus, 
$V(x, \Box p) = \F$ and $V(x, \Box\Box p) = V(x, \Box\Box\Box p) = \T$).
The other sequents 
$(\DSEQ{}{\Box p})$,
$(\DSEQ{\Box\Box\Box p}{})$,
and 
$(\DSEQ{}{})$
are easily shown to be cut-free unprovable.
\end{proof}

In the next section, 
we will show a weaker version of cut-elimination for {\sDa} --- 
we can eliminate all but the cuts that have subformula property ---
by a semantical method.


\section{Cut-free completeness}
\label{sec:cut-free-completeness}

In this section, we prove cut-free completeness of the sequent calculi.
As shown above, 
{\sDa} does not have the full cut-elimination property;
so, for {\sDa}, 
we show the completeness of `analytic {\sDa}'
which is defined below.

\begin{definition}
\textup{
{\em Analytic} {\sDa} is a restriction of {\sDa}
where cut is allowed only in the form
\[
\infer[\mbox{(cut$_{\Darrow}$) \ Proviso: $\Box\varphi \in \SF{\GA,\DE,\PI,\SI}$}.]{\DSEQ{\GA,\PI}{\DE,\SI}.}{\DSEQ{\GA}{\DE, \Box\varphi} & \DSEQ{\Box\varphi, \PI}{\SI}}
\]
}
\end{definition}

The following are key notions for proving 
the completeness.

\begin{definition}
\textup{
Let $\blacktriangleright \in \{\Rightarrow, \Rrightarrow, \Darrow\}$.
\begin{itemize}
\item
A GL/S/D-sequent $\XSEQ{\GA}{\DE}$  is 
{\em $\IMP$-saturated}
$\quad\DEF\quad$
\\
\hfill
$(\forall \varphi,\psi \in \FORMULA)
((\varphi \IMP \psi \in \GA \Longrightarrow \varphi \in \DE \text{ or } \psi \in \GA)
\text{ and } 
(\varphi \IMP \psi \in \DE \Longrightarrow \varphi \in \GA \text{ and } \psi \in \DE))$.
\item
A GL/S/D-sequent $\XSEQ{\GA}{\DE}$  is 
{\em $\Box$-left-saturated}
$\quad\DEF\quad$
$(\forall \varphi \in \FORMULA)
(\Box\varphi \in\GA \Longrightarrow \varphi \in \GA)$.
\item
A GL-sequent $\SEQ{\PS^+}{\PH^+}$ is a 
{\em \sGL-saturation of $\SEQ{\PS}{\PH}$}
$\quad\DEF\quad$
$\PS \subseteq \PS^+ \subseteq \SF{\PS,\PH}$; 
$\PH \subseteq \PH^+ \subseteq \SF{\PS,\PH}$; 
$\SEQ{\PS^+}{\PH^+}$ is $\IMP$-saturated;
and cut-free {\sGL} $\not\vdash \SEQ{\PS^+}{\PH^+}$.
\item
An S-sequent $\SSEQ{\PS^+}{\PH^+}$ is an
{\em \sS-saturation of $\SSEQ{\PS}{\PH}$}
$\quad\DEF\quad$
$\PS \subseteq \PS^+ \subseteq \SF{\PS,\PH}$; 
$\PH \subseteq \PH^+ \subseteq \SF{\PS,\PH}$; 
$\SSEQ{\PS^+}{\PH^+}$ is $\IMP$-saturated and $\Box$-left-saturated;
and 
cut-free {\sS} $\not\vdash \SSEQ{\PS^+}{\PH^+}$.
\item
A D-sequent $\DSEQ{\PS^+}{\PH^+}$ is a
{\em \sDa-saturation of $\DSEQ{\PS}{\PH}$}
$\quad\DEF\quad$
$\PS \subseteq \PS^+ \subseteq \SF{\PS,\PH}$; 
$\PH \subseteq \PH^+ \subseteq \SF{\PS,\PH}$; 
$\DSEQ{\PS^+}{\PH^+}$ is $\IMP$-saturated;
$\Box\varphi \in \PS^+\cup\PH^+$ for any $\Box\varphi \in \SF{\PS,\PH}$;
and analytic {\sDa} $\not\vdash \DSEQ{\PS^+}{\PH^+}$.
\item
A D-sequent $\DSEQ{\GA}{\DE}$ is a
{\em \sDb-saturation of $\DSEQ{\PS}{\PH}$}
$\quad\DEF\quad$
$\PS \subseteq \PS^+ \subseteq \SF{\PS,\PH}$; 
$\PH \subseteq \PH^+ \subseteq \SF{\PS,\PH}$; 
$\DSEQ{\PS^+}{\PH^+}$ is $\IMP$-saturated;
and 
cut-free {\sDb} $\not\vdash \DSEQ{\PS^+}{\PH^+}$.
\end{itemize}
}
\end{definition}

The following lemma is a standard tool for the semantical cut-eliminations.

\begin{lemma}
\label{lm:saturation}
\ \\
{\bf (1)}
If cut-free {\sGL} $\not\vdash \SEQ{\PS}{\PH}$,
then there is a \sGL-saturation of it.

\noi{\bf (2)}
If cut-free {\sS} $\not\vdash \SSEQ{\PS}{\PH}$,
then there is an \sS-saturation of it.

\noi{\bf (3)}
If analytic {\sDa} $\not\vdash \DSEQ{\PS}{\PH}$,
then there is a \sDa-saturation of it.

\noi{\bf (4)}
If cut-free {\sDb} $\not\vdash \DSEQ{\PS}{\PH}$,
then there is a \sDb-saturation of it.
\end{lemma}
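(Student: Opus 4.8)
The plan is to prove all four parts by the same standard saturation-by-stages argument, so I would treat them uniformly and remark only on the differences. Fix one of the four calculi $\mathcal{C}$ (one of cut-free $\sGL$, cut-free $\sS$, semi-cut-free $\sDa$, cut-free $\sDb$) together with the corresponding notion of saturation, and suppose the given sequent $\XSEQ{\PS}{\PH}$ is not provable in $\mathcal{C}$. First I would observe that there are only finitely many formulas in $\SF{\PS,\PH}$, hence only finitely many sequents $\XSEQ{\PS'}{\PH'}$ with $\PS\subseteq\PS'\subseteq\SF{\PS,\PH}$ and $\PH\subseteq\PH'\subseteq\SF{\PS,\PH}$. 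Among all such sequents that are \emph{not} provable in $\mathcal{C}$ and that contain $\XSEQ{\PS}{\PH}$ componentwise, pick one, call it $\XSEQ{\PS^+}{\PH^+}$, that is maximal with respect to componentwise inclusion (this exists by finiteness, and the set is nonempty since it contains $\XSEQ{\PS}{\PH}$ itself). I claim $\XSEQ{\PS^+}{\PH^+}$ is the desired saturation.

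The bulk of the argument is checking that maximality forces the closure conditions listed in the definition of saturation, using the inference rules of the respective calculus together with weakening and cut. For $\IMP$-saturation: suppose $\alpha\IMP\beta\in\PS^+$ but $\alpha\notin\PH^+$ and $\beta\notin\PS^+$. Then by maximality both $\XSEQ{\PS^+}{\PH^+,\alpha}$ and $\XSEQ{\beta,\PS^+}{\PH^+}$ are provable in $\mathcal{C}$ (they properly extend $\XSEQ{\PS^+}{\PH^+}$ and still lie inside $\SF{\PS,\PH}$, as $\alpha,\beta\in\SF{\PS,\PH}$), and then $(\IMP$L$)$ yields a proof of $\XSEQ{\PS^+}{\PH^+}$, a contradiction; the $\PH^+$ side is symmetric using $(\IMP$R$)$. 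For the $\Box$-left-saturation condition in parts (2) and (3): if $\Box\alpha\in\PS^+$ but $\alpha\notin\PS^+$, then $\XSEQ{\alpha,\PS^+}{\PH^+}$ is provable, and one application of $(\RuleLeftBox)$ (available in $\sS$ and $\sDb$; for $\sDa$ one argues directly via $\sGL$-provability of the corresponding GL-sequent, noting that $\sDa$-proofs of S-free sequents reduce to $\sGL$ — but here the relevant clause is actually the one about $\Box\alpha\in\PS^+\cup\PH^+$) gives $\XSEQ{\Box\alpha,\PS^+}{\PH^+}$, contradiction. For part (3)'s extra clause — $\Box\alpha\in\PS^+\cup\PH^+$ for every $\Box\alpha\in\SF{\PS,\PH}$ — I would argue that if $\Box\alpha\in\SF{\PS,\PH}$ were in neither component, then by maximality both $\XSEQ{\PS^+}{\PH^+,\Box\alpha}$ and $\XSEQ{\Box\alpha,\PS^+}{\PH^+}$ are semi-cut-free provable, and since $\Box\alpha\in\SF{\PS^+,\PH^+}$ the cut on $\Box\alpha$ is \emph{exactly} of the permitted form $(\mathrm{cut}_\Darrow)$, so it may be applied to derive $\XSEQ{\PS^+}{\PH^+}$ semi-cut-free — contradiction. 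Finally, that $\mathcal{C}\not\vdash\XSEQ{\PS^+}{\PH^+}$ is immediate from the choice of $\XSEQ{\PS^+}{\PH^+}$, and the inclusions $\PS\subseteq\PS^+\subseteq\SF{\PS,\PH}$ and likewise for $\PH^+$ hold by construction.

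The step I expect to be the main (though still routine) obstacle is the bookkeeping around which cuts and which structural rules are legitimately available in each of the four calculi — in particular making sure that in the semi-cut-free $\sDa$ case every cut invoked in the saturation of the succedent/antecedent with a boxed formula genuinely satisfies the subformula side-condition $\Box\varphi\in\SF{\GA,\DE,\PI,\SI}$, since an ill-placed cut would not be available and the argument would break. One has to be careful that the cut is on a \emph{boxed} subformula (the only ones the extra $\sDa$-clause asks about), so the side condition is met automatically; for the plain $\IMP$-saturation clauses no cut is needed at all, only $(\IMP$L$)$/$(\IMP$R$)$ and weakening, so there is no issue there. I would also double-check that the weakening rule, which is present in all four calculi via $\LK/\LKS/\LKD$, is what licenses passing from a proof of a smaller sequent to the needed intermediate sequents inside $\SF{\PS,\PH}$. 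Everything else is a direct case analysis on the defining conditions of each saturation notion.
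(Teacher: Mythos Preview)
Your argument is correct and follows essentially the same Lindenbaum-style saturation as the paper, just phrased via choosing a maximal unprovable extension rather than the paper's step-by-step enlargement. One small slip in presentation: $\Box$-left-saturation is required only in part~(2), not~(3) --- you catch this yourself in the parenthetical, and your permitted-cut argument for the extra $\sDa$-clause in~(3) is exactly the right idea.
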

\begin{proof}
A proof of (1) is well-known.
We add proper formulas to $\PS$ and $\PH$ step by step
(to be precise,
if $\varphi\IMP\psi$ is in the left-hand side,
we do either adding $\varphi$ to the right
or adding $\psi$ to the left;
if $\varphi\IMP\psi$ is in the right-hand side,
we add $\varphi$ to the left and $\psi$ to the right)
while preserving cut-free unprovability, 
and eventually we obtain an $\IMP$-saturated sequent.
The same proof can be done for (4).
For the proofs of (2) and (3), 
we combine the following step with the above procedure.
(For 2):
If $\Box\varphi$ is in the left-hand side,
then we add $\varphi$ there.
(For 3):
If $\Box\varphi \in \SF{\PS,\PH}$,
then 
we add $\Box\varphi$ to either the left-hand or right-hand side
while preserving analytic unprovability.
\end{proof}

\begin{theorem}[Soundness and cut-free completeness of {\sGL}]
\label{th:cut-free-completeness-GL}
For any GL-sequent $\SEQ{\PS}{\PH}$, the following three conditions are equivalent.
\begin{itemize}
\item[{\rm (1)}]
$\sGL \vdash \SEQ{\PS}{\PH}$.

\item[{\rm (2)}]
Cut-free $\sGL \vdash \SEQ{\PS}{\PH}$.

\item[{\rm (3)}]
$\bigwedge \PS \IMP \bigvee \PH$ is true 
at any world of any GL-model.
\end{itemize}
\end{theorem}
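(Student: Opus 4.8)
The plan is to prove the three implications $(2)\Rightarrow(1)$, $(1)\Rightarrow(3)$, and $(3)\Rightarrow(2)$. The first is immediate, since a cut-free proof is a proof. The second is soundness of $\sGL$ with respect to GL-models, which can be obtained at once from Propositions~\ref{prop:Corespondence-GL} and~\ref{prop:PreliminaryGL}, or proved directly by induction on a $\sGL$-proof $P$ of $\SEQ{\PS}{\PH}$: every case is routine truth-functional reasoning except when the last rule of $P$ is $(\RuleGL)$, so that $\SEQ{\PS}{\PH}=\SEQ{\Box\GA}{\Box\varphi}$ with premise $\SEQ{\GA,\Box\GA,\Box\varphi}{\varphi}$. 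Fixing a GL-model $\Kmodel{W}{\R}{V}$ and a world $w$ with $V(w,\bigwedge\Box\GA)=\T$, I want $V(w,\Box\varphi)=\T$; if this failed, converse well-foundedness would supply an $\R$-maximal $w'\RI w$ with $V(w',\varphi)=\F$, and then transitivity gives $V(w',\bigwedge\GA)=V(w',\bigwedge\Box\GA)=\T$ while the $\R$-maximality of $w'$ gives $V(w',\Box\varphi)=\T$, so the induction hypothesis applied to the premise at $w'$ forces $V(w',\varphi)=\T$, a contradiction.

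The main work is $(3)\Rightarrow(2)$, which I would prove contrapositively: assuming cut-free $\sGL\not\vdash\SEQ{\PS}{\PH}$, I build a finite GL-model whose root falsifies $\bigwedge\PS\IMP\bigvee\PH$. By Lemma~\ref{lm:saturation}(1) there is a $\sGL$-saturation $\SEQ{\PS_0}{\PH_0}$ of $\SEQ{\PS}{\PH}$, which will be the root. In general, worlds are $\sGL$-saturations $\SEQ{\GA}{\DE}$ of sequents all of whose components lie in $\SF{\PS,\PH}$, and the valuation is $V(\SEQ{\GA}{\DE},p)=\T$ iff $p\in\GA$. The accessibility relation is generated thus: given a world $\SEQ{\GA}{\DE}$ and any $\Box\psi\in\DE$, let $\GA'$ be the set of all $\beta$ with $\Box\beta\in\GA$, so that $\Box\GA'$ is the set of boxed members of $\GA$; since the left side of $\SEQ{\Box\GA'}{\Box\psi}$ is contained in $\GA$ and its right side in $\DE$, a cut-free proof of $\SEQ{\Box\GA'}{\Box\psi}$ would yield one of $\SEQ{\GA}{\DE}$ by weakening, so cut-free $\sGL\not\vdash\SEQ{\Box\GA'}{\Box\psi}$, and hence cut-free $\sGL\not\vdash\SEQ{\GA',\Box\GA',\Box\psi}{\psi}$ (else $(\RuleGL)$ would derive the former); by Lemma~\ref{lm:saturation}(1) we take a $\sGL$-saturation of this sequent, declare it an immediate $\R$-successor of $\SEQ{\GA}{\DE}$, and finally close $\R$ under transitivity. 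The truth lemma states that for every world $w=\SEQ{\GA}{\DE}$ and every $\chi\in\SF{\PS,\PH}$ we have $V(w,\chi)=\T$ if $\chi\in\GA$ and $V(w,\chi)=\F$ if $\chi\in\DE$; it is proved by induction on $\chi$, where the propositional and $\IMP$ cases use $\IMP$-saturation together with cut-free unprovability (which forbids $\chi$ lying in both $\GA$ and $\DE$, since otherwise an initial sequent plus weakening would derive $\SEQ{\GA}{\DE}$), the case $\Box\chi\in\GA$ uses that each immediate successor — hence transitively each $\R$-successor — carries $\chi$ on its left, and the case $\Box\chi\in\DE$ uses the successor built from $\Box\chi$ itself, which carries $\chi$ on its right. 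Applying the truth lemma at the root falsifies $\bigwedge\PS\IMP\bigvee\PH$ there.

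I expect the one genuinely delicate point to be verifying that the constructed frame is a GL-frame. Transitivity is built in by the final closure step. For converse well-foundedness, the key observation is that in passing from $\SEQ{\GA}{\DE}$ to a successor via $\Box\psi\in\DE$ we must have $\Box\psi\notin\GA$ — otherwise $\Box\psi$ would occur on both sides of the cut-free-unprovable sequent $\SEQ{\GA}{\DE}$ — whereas the left side of the successor contains all of $\Box\GA'\cup\{\Box\psi\}$; hence the set of boxed formulas occurring on the left strictly grows along each $\R$-edge. Since these all lie in the finite set $\SF{\PS,\PH}$, every $\R$-chain is bounded in length, so the frame is finite and converse well-founded. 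I would isolate the model construction and the truth lemma as a standalone lemma, since essentially the same scheme, with $\sS$-, $\sDa$-, and $\sDb$-saturations in place of $\sGL$-saturations, will be the backbone of the cut-free completeness proofs for the remaining calculi.
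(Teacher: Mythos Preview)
Your argument is correct and follows the standard canonical-model approach to cut-free completeness for $\sGL$. Note, however, that the paper does not actually prove this theorem: immediately after the statement it says ``Theorem~\ref{th:cut-free-completeness-GL} is well-known; see, e.g., \cite{Avron} for the proof.'' So there is no proof in the paper to compare against, and your construction is essentially the one found in the cited literature.

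One remark on your closing paragraph: your expectation that the same saturation-and-countermodel scheme will be rerun for $\sS$, $\sDa$, and $\sDb$ is not quite how the paper proceeds. In the proofs of Theorems~\ref{th:cut-free-completeness-S}, \ref{th:cut-free-completeness-Da}, and \ref{th:cut-free-completeness-Db}, the paper does \emph{not} rebuild a GL-model from scratch; instead it takes the appropriate saturation, extracts from it a cut-free unprovable GL-sequent, and then invokes Theorem~\ref{th:cut-free-completeness-GL} as a black box to obtain a GL-model and a world $t_0$, which it then extends by a tail and limit. So the model construction you give here is used once and then quoted, rather than being repeated with different saturation notions. This does not affect the correctness of anything you wrote, but it means there is less payoff than you anticipate from isolating the construction as a reusable lemma.
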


Theorem~\ref{th:cut-free-completeness-GL} is well-known;
see, e.g.,\cite{Avron} for the proof.

\medskip

Before showing the theorem for {\sS}, 
we state a proposition:

\begin{proposition}
\label{prop:trivial-GL}
If $\sGL \vdash \SEQ{\PS}{\PH}$, 
then 
$\bigwedge \PS \IMP \bigvee \PH$
is eventually always true in the tail of 
any $\lambda$-extension of any GL-model, 
for any limit ordinal $\lambda$.
\end{proposition}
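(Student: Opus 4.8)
The plan is to observe that a tail-limit-extension of a GL-model is itself a GL-model, and then simply invoke soundness of \sGL. So fix a GL-model $M=\Kmodel{W}{\R}{V}$ and a tail-limit-extension $M^+=\Kmodel{W^+}{\R^+}{V^+}$ with tail $(t_0,t_1,t_2,\ldots)$ and limit $t_\omega$, where $t_0\in W$. I would proceed in two steps: (i) show that the frame $\Kframe{W^+}{\R^+}$ is transitive and converse well-founded; (ii) conclude, by the equivalence of (1) and (3) in Theorem~\ref{th:cut-free-completeness-GL} (equivalently, by Propositions~\ref{prop:PreliminaryGL} and~\ref{prop:Corespondence-GL}), that $\bigwedge\PS\IMP\bigvee\PH$ is true at every world of $M^+$, in particular at every $t_i$ with $i<\omega$, which is more than what is asked.

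For step (i), the one structural fact that drives everything is that no world of $W$ has an $\R^+$-edge to any $t_k$ (the new tail worlds $t_1,t_2,\ldots$ occur only as \emph{left} components of the added pairs), and that $\R^+$ restricted to $W$ is just $\R$. Transitivity then follows by a short case analysis according to which of the three blocks of $\R^+$ --- namely $\R$, $\{(t_i,t_j)\mid j<i\}$, and $\{(t_i,x)\mid 0<i,\ (t_0,x)\in\R\}$ --- the two composed edges lie in, using transitivity of $\R$ and (in the blocks involving $x$) the fact that $(t_0,x)\in\R$ together with $x\R y$ yields $(t_0,y)\in\R$. For converse well-foundedness, note that along any $\R^+$-path the prefix spent in the tail visits the $t_i$'s with strictly decreasing indices, where $t_\omega$ counts as the largest index, so that prefix is finite; and as soon as the path reaches $W$ it can never leave $W$ again, so from that point on it is an $\R$-path in the GL-model $M$ and hence also finite. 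Therefore $\R^+$ admits no infinitely ascending sequence.

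Granting (i), step (ii) is immediate: since $\sGL\vdash\SEQ{\PS}{\PH}$, the formula $\bigwedge\PS\IMP\bigvee\PH$ is true at every world of every GL-model, hence at $t_i$ in $M^+$ for every $i<\omega$ (indeed at every world of $M^+$). Consequently it is \emph{eventually} always true in the tail; one may even take the witness $n=0$ in the definition. (Recall also that truth of a formula at a world of $W$ is unaffected by passing to $M^+$, by the proposition stated just after the definition of tail-limit-extension, so there is no discrepancy to worry about at $t_0$ either.)

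I do not expect a genuine obstacle here: the proposition amounts to the remark that attaching a descending tail together with a limit to a GL-model preserves transitivity and converse well-foundedness. The only point deserving a little care is the converse-well-foundedness argument, where one must verify both that a path cannot re-enter the tail from $W$ and that the indices really do decrease through a well-order containing $\omega$; everything else is routine bookkeeping.
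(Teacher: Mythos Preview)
Your approach is exactly the paper's: the authors simply remark that a tail-limit-extension of a GL-model is again a GL-model and then invoke soundness of \sGL, without spelling out the verification of transitivity and converse well-foundedness that you (correctly) supply. One minor terminological wobble: you speak of ``the prefix spent in the tail'' but also include $t_\omega$ there, whereas in the paper the tail is $(t_0,t_1,\ldots)$ and $t_\omega$ is the limit; the clean phrasing is that the prefix spent in the \emph{new} worlds $\{t_1,t_2,\ldots\}\cup\{t_\omega\}$ has strictly decreasing indices in the well-order $\omega+1$, hence is finite, and once the path enters $W$ (possibly at $t_0$) it stays there.
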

This proposition is trivial
because a $\lambda$-extension of a GL-model is also a GL-model.
The reason for stating such a trivial proposition is as follows.
We will generalize the results of this section in Section~\ref{sec:generalization},
where the generalized proposition is not trivial but considered as a hypothesis.

\begin{theorem}[Soundness and cut-free completeness of {\sS}]
\label{th:cut-free-completeness-S}
Let $\lambda$ be a limit ordinal.
For any S-sequent $\SSEQ{\PS}{\PH}$, the following six conditions are equivalent.
\begin{itemize}
\item[{\rm (1)}]
$\sS \vdash \SSEQ{\PS}{\PH}$.

\item[{\rm (2)}]
Cut-free $\sS \vdash \SSEQ{\PS}{\PH}$.

\item[{\rm (3)}]
$\bigwedge \PS \IMP \bigvee \PH$ is true 
at the bottom of any strongly constant $\lambda$-extension of any GL-model.

\item[{\rm (4)}]
$\bigwedge \PS \IMP \bigvee \PH$ is eventually always true in the tail of 
any strongly constant $\lambda$-extension of any GL-model.

\item[{\rm (5)}]
$\bigwedge \PS \IMP \bigvee \PH$ is eventually always true in the tail of 
any constant $\lambda$-extension of any GL-model.

\item[{\rm (6)}]
$\bigwedge \PS \IMP \bigvee \PH$ is eventually always true in the tail of 
any $\lambda$-extension of any GL-model.
\end{itemize}
\end{theorem}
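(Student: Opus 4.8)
The plan is to prove the six-way equivalence in Theorem~\ref{th:cut-free-completeness-S} by establishing a cycle of implications that routes all the semantic conditions through the syntactic ones. The easy arcs are $(1)\Rightarrow(2)$, which is the cut-elimination theorem for \sS\ cited from \cite{Kushida, Beklemishev87, KK}, and the soundness arcs $(1)\Rightarrow(3)$ and $(1)\Rightarrow(4)$, which follow by induction on an \sS-proof: one checks that each rule of \LK, \LKS, (\RuleGL), (\RuleS), and (\RuleLeftBox) preserves ``eventually always true in the tail'' and ``true at the limit'' of a strongly constant tail-limit-extension. The rule (\RuleLeftBox) is the only interesting case, and it works precisely because in the tail $t_0 \succ t_1 \succ \cdots$ every $t_i$ with $i>0$ sees $t_{i-1}$, so $V^+(t_i, \Box\varphi)=\T$ forces $V^+(t_{i-1},\varphi)=\T$; combined with strong constancy (or the eventual stabilization in the tail) this lets one pass from the premise to the conclusion. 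The trivial implications $(4)\Rightarrow(5)\Rightarrow(6)$ and $(3)\Rightarrow(4)$ among the semantic conditions are immediate from the definitions, since a strongly constant extension is constant and ``true at the limit'' together with a suitable continuity argument (or simply re-examining the proof) yields eventual truth in the tail.

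The substantial direction is cut-free completeness, i.e. the contrapositive of $(6)\Rightarrow(2)$: assuming cut-free $\sS \not\vdash \SSEQ{\PS}{\PH}$, I would build a GL-model and a tail-limit-extension of it in which $\bigwedge\PS \IMP \bigvee\PH$ fails cofinally in the tail. The standard construction is to take as worlds of the GL-model the \sGL-saturations of subsequents of $\SEQ{\PS,\PH}{\,}$-style sequents (the saturations furnished by Lemma~\ref{lm:saturation}(1)), ordered by a relation derived from boxed formulas, and to check via Theorem~\ref{th:cut-free-completeness-GL} that this is a genuine GL-model with the usual truth lemma: a saturated GL-sequent $\SEQ{\GA}{\DE}$ satisfies $V(\GA)=\T$, $V(\DE)=\F$ at its own world. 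Then one uses an \sS-saturation of $\SSEQ{\PS}{\PH}$, provided by Lemma~\ref{lm:saturation}(2), to locate the limit world $t_\omega$, and populates the tail $t_0 \succ t_1 \succ \cdots$ with copies of (saturations of) this S-sequent, arranging the valuation to be strongly constant on the tail so that each $t_i$ makes $\bigwedge\PS$ true and $\bigvee\PH$ false. The $\Box$-left-saturation condition on the S-saturation is exactly what makes the truth lemma go through at the tail worlds for boxed formulas in the antecedent, since $t_i$ sees $t_{i-1}$ which is a copy of the same sequent.

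The main obstacle is verifying the truth lemma at the tail and limit worlds for the modal clause, in both directions. For a box formula $\Box\alpha$ in the succedent of the S-saturation we must exhibit a $\succ^+$-successor of $t_\omega$ (equivalently of some $t_i$) falsifying $\alpha$: if $\Box\alpha$ came originally from the GL-part this successor lies in the embedded GL-model and is handled by the GL truth lemma, but if $\Box\alpha$ is ``new'' we need a successor inside the tail, which is why the tail must consist of copies of a sequent in whose succedent $\alpha$ sits after further saturation — this forces a slightly careful bookkeeping of which boxed subformulas are placed left versus right when saturating, and an appeal to the fact that cut-free \sS-unprovability of the S-saturation propagates to the GL-level via the rule (\RuleS) and to the ``unboxed'' level via (\RuleGL). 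Conversely, a box formula $\Box\alpha$ in the antecedent must be true at $t_\omega$, i.e. true at every $\succ^+$-successor; the successors in the embedded GL-model are fine because $\Box\alpha$ propagates into the relevant GL-saturations, and the successors $t_i$ in the tail are fine because $\Box$-left-saturation put $\alpha$ itself into the antecedent there. Once the truth lemma is in place, $t_\omega$ (and cofinally many $t_i$) falsify $\bigwedge\PS\IMP\bigvee\PH$, contradicting $(3)$ and $(6)$ simultaneously, closing the cycle. I expect this modal-clause verification — and in particular the correct choice of what to saturate into the tail so that ``new'' succedent boxes get witnesses — to be where essentially all the work lies; everything else is the routine adaptation of the $\bf S$-completeness argument of \cite{KK}.
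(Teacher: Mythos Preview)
There is a genuine gap: you claim $(4)\Rightarrow(5)\Rightarrow(6)$ is ``immediate from the definitions, since a strongly constant extension is constant,'' but that reasoning yields the \emph{opposite} direction. Because every strongly constant extension is constant, and every constant extension is a tail-limit-extension, the containments of model classes give $(6)\Rightarrow(5)\Rightarrow(4)$, which is exactly what the paper records as trivial. With the arrows corrected your cycle no longer closes: you establish soundness only for strongly constant extensions (your $(1)\Rightarrow(3)$ and $(1)\Rightarrow(4)$), and your countermodel gives $(3),(4),(5),(6)\Rightarrow(2)$; but nothing in your outline shows that $(1)$ implies $(5)$ or $(6)$.

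The repair is the paper's route: prove $(1)\Rightarrow(6)$ directly by induction on the \sS-proof, and here no constancy is needed. For (\RuleLeftBox) the correct observation is not that $\Box\varphi$ at $t_i$ forces $\varphi$ at $t_{i-1}$ (which you would then have to transport back to $t_i$ via constancy), but simply that $\Box\varphi\IMP\varphi$ is eventually always true in the tail of \emph{any} tail-limit-extension: if $\varphi$ fails at some $t_n$ then $\Box\varphi$ fails at every $t_m$ with $m>n$. The paper's cycle is then $(2)\Rightarrow(1)\Rightarrow(6)\Rightarrow(5)\Rightarrow(4)\Rightarrow(3)\Rightarrow(2)$, with $(4)\Rightarrow(3)$ handled by the formula-induction you call a ``continuity argument,'' and $(3)\Rightarrow(2)$ by the countermodel. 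Your countermodel is essentially the paper's --- take an \sS-saturation $\SSEQ{\PS^+}{\PH^+}$, pass to the GL-sequent via (\RuleS), invoke Theorem~\ref{th:cut-free-completeness-GL} as a black box to get a GL-model and a world $t_0$ refuting it, and form the strongly constant extension --- but you overcomplicate the modal truth-lemma: for $\Box\alpha\in\PH^+$ the falsifying successor already lives in the GL-model (since $\Box\alpha$ is false at $t_0$), so there is no ``new'' case requiring a tail witness. Note also that the paper does not invoke syntactic cut-elimination for $(1)\Rightarrow(2)$; cut-free completeness is obtained purely from $(3)\Rightarrow(2)$.
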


\begin{proof}
Note that the implications
`$(6)\Rightarrow (5) \Rightarrow (4)$'
and 
`$(2)\Rightarrow (1)$'
are trivial.

(Proof of $(1)\Rightarrow (6)$)
By induction on the {\sS}-proof of $\SSEQ{\PS}{\PH}$.
When the last inference rule is (\RuleS), 
we use Proposition~\ref{prop:trivial-GL}.
When the last inference rule is (\RuleLeftBox), 
we use the fact that
{\em $\Box\varphi \IMP \varphi$
is eventually always true in the tail 
$\{t_\alpha \mid 0 < \alpha < \lambda\}$ of 
any $\lambda$-extension of any Kripke model.}
(Proof of this fact:
If $\varphi$ is true at all the worlds in the tail, 
then so is $\Box\varphi \IMP \varphi$.
If $\varphi$ is false at $t_\alpha$ for some $\alpha$, then 
$\Box\varphi \IMP \varphi$ is true at $t_\beta$ for any $\beta$
such that $\alpha < \beta < \lambda$.)

(Proof of $(4)\Rightarrow (3)$)
Let $M = \Kmodel{W}{\R}{V}$ be a strongly constant 
$\lambda$-extension of a Kripke-model,
$t_\lambda$ be the bottom, 
and $\{t_\alpha \mid 0 < \alpha < \lambda\}$ be the tail of $M$.
For any formula $\varphi$, we consider four conditions below.
($\lambda$T):
$V(t_\lambda, \varphi) = \T$.
($\lambda$F):
$V(t_\lambda, \varphi) = \F$.
(ET):
$(\exists \alpha <\lambda)(\alpha < \forall \beta < \lambda)(V(t_\beta, \varphi) = \T)$.
(EF):
$(\exists \alpha <\lambda)(\alpha < \forall \beta < \lambda)(V(t_\beta, \varphi) = \F)$.
By induction on $\varphi$, we can show two implications
`$\text{($\lambda$T)} \Rightarrow \text{(ET)}$'
and
`$\text{($\lambda$F)} \Rightarrow \text{(EF)}$'.
Therefore we have
`$\text{(ET)} \Rightarrow \text{($\lambda$T)}$'
because 
the conditions (ET) and (EF) are exclusive, 
and the condition ($\lambda$T) is the negation of ($\lambda$F).

(Proof of $(3)\Rightarrow (2)$)
We show the contraposition.
Suppose cut-free $\sS \not\vdash \SSEQ{\PS}{\PH}$.
We will construct a strongly constant $\lambda$-extension of a GL-model 
such that $\bigwedge \PS \IMP \bigvee \PH$ is false at the bottom.
First we apply Lemma~\ref{lm:saturation}(2) to $\SSEQ{\PS}{\PH}$, 
and we get an \sS-saturation $\SSEQ{\PS^+}{\PH^+}$.
The GL-sequent $\SEQ{\PS^+}{\PH^+}$ is not cut-free provable because of 
the (\RuleS)-rule,
and this sequent is not cut-free provable also in {\sGL}.
Then we get a GL-model 
$\Kmodel{W}{\R}{V}$ such that 
$\bigwedge \PS^+ \IMP \bigvee \PH^+$
is false at a world $t_0$, 
by the cut-free completeness of {\sGL} 
(Th.\ref{th:cut-free-completeness-GL}).
Thus, we have the following:
(A) If $p \in \PS^+$, then $V(t_0,p)=\T$.
(B) If $p \in \PH^+$, then $V(t_0,p)=\F$.
(C) If $\Box\psi \in \PS^+$, then $V(x,\psi)=\T$ for any $x \in W$
such that $t_0 \R x$.
(D) If $\Box\psi \in \PH^+$, then $V(x,\psi)=\F$ for some $x \in W$
such that $t_0 \R x$.
Then we define a strongly constant $\lambda$-extension 
$\Kmodel{W^+}{\R^+}{V^+}$ of $\Kmodel{W}{\R}{V}$ 
where $\{t_\alpha \mid 0 < \alpha < \lambda\}$ 
is the tail, $t_\lambda$ is the bottom,
and 
$V^+(t_\alpha, p) = V(t_0, p)$ 
for any $\alpha \leq \lambda$ and $p$.
(In Figure~\ref{Fig:Completeness} (left),
the essence of $\Kmodel{W^+}{\R^+}{V^+}$
is illustrated:
each world is a sequent,
and the goal is to show that 
each formula in the left (or right) hand side is 
true (or false, resp.) there.)
Now, for any formula $\varphi$ and any ordinal number $\alpha \leq \lambda$, 
we prove the following two properties by induction on $\varphi$.
(L) {\em If $\varphi \in \PS^+$, then $V^+(t_\alpha, \varphi) = \T$.}
(R) {\em If $\varphi \in \PH^+$, then $V^+(t_\alpha, \varphi) = \F$.}
In the case of $\varphi = p$, 
we use the facts (A) and (B).
In the case of $\varphi = \Box\psi$, we use 
$\Box$-left-saturatedness, the induction hypothesis, and 
the fact (C) to show the property (L);
and  we use the fact (D) to show the property (R).
Thus we have 
$V^+(t_\lambda, \bigwedge \PS \IMP \bigvee \PH) = \F$.
\end{proof}

\begin{figure}[h]

\begin{minipage}{7cm}
\begin{tikzpicture} [x=1mm,y=1mm]

\draw(0,10) ellipse (15 and 7.5);

\node at (0,6) {$\SEQ{\PS^+}{\PH^+}$};
\draw [->, thick] (-1,7)--(-4,9);
\draw [->, thick] (0,7)--(0,10);
\draw [->, thick] (1,7)--(4,9);

\draw [->, thick] (0,1.5)--(0,4);
\node at (0,0) {$\SSEQ{\PS^+}{\PH^+}$};
\draw [->, thick] (0,-4.5)--(0,-2);
\node at (0,-6) {$\SSEQ{\PS^+}{\PH^+}$};
\draw [->, thick] (0,-10.5)--(0,-8);

\node at (0,-11) {$\cdot$};
\node at (0,-12) {$\cdot$};
\node at (0,-13) {$\cdot$};
\node at (0,-14) {$\cdot$};
\node at (0,-15) {$\cdot$};

\node at (0,-19) {$\SSEQ{\PS^+}{\PH^+}$};

\node [right] at (5,12) {\colorbox{white}{$\Kmodel{W}{\R}{V}$}};

\node at (-40,0) {};
\end{tikzpicture} 
\end{minipage}
\begin{minipage}{7cm}
\begin{tikzpicture} [x=1mm,y=1mm]

\draw(0,10) ellipse (15 and 7.5);

\node at (0,6) {$\SEQ{\PI}{\SI}$};
\draw [->, thick] (-1,7)--(-4,9);
\draw [->, thick] (0,7)--(0,10);
\draw [->, thick] (1,7)--(4,9);

\draw [->, thick] (0,1.5)--(0,4);
\node at (0,0) {$\SEQ{\PI}{\SI}$};
\draw [->, thick] (0,-4.5)--(0,-2);
\node at (0,-6) {$\SEQ{\PI}{\SI}$};
\draw [->, thick] (0,-10.5)--(0,-8);

\node at (-7,-9) {\small \rotatebox{45}{$\subseteq$}};
\node at (-8,-13) {\small $\PS^\star, \Box\PS^\star$};

\node at (7,-9) {\small \rotatebox{-45}{$\supseteq$}};
\node at (10.5,-12.5) {\small $\Box\PH^\star$};

\node at (0,-11) {$\cdot$};
\node at (0,-12) {$\cdot$};
\node at (0,-13) {$\cdot$};
\node at (0,-14) {$\cdot$};
\node at (0,-15) {$\cdot$};

\node at (0,-19) {$\DSEQ{\PS^+}{\PH^+}$};

\node [right] at (5,12) {\colorbox{white}{$\Kmodel{W}{\R}{V}$}};

\node at (-40,0) {};
\end{tikzpicture} 
\end{minipage}
\caption{
$\Kmodel{W^+}{\R^+}{V^+}$ in Theorem~\ref{th:cut-free-completeness-S} 
(left)  and in Theorem~\ref{th:cut-free-completeness-Da} (right).
}
\label{Fig:Completeness}
\end{figure}

\begin{theorem}[Soundness and analytic completeness of {\sDa}]
\label{th:cut-free-completeness-Da}
Let $\lambda$ be a limit ordinal.
For any D-sequent $\DSEQ{\PS}{\PH}$, the following four conditions are equivalent.
\begin{itemize}
\item[{\rm (1)}]
$\sDa\vdash \DSEQ{\PS}{\PH}$.

\item[{\rm (2)}]
Analytic $\sDa \vdash \DSEQ{\PS}{\PH}$.

\item[{\rm (3)}]
$\bigwedge \PS \IMP \bigvee \PH$ is true 
at the bottom of any constant $\lambda$-extension of any GL-model.

\item[{\rm (4)}]
$\bigwedge \PS \IMP \bigvee \PH$ is true 
at the bottom of any $\lambda$-extension of any GL-model.
\end{itemize}
\end{theorem}

\begin{proof}
The implications `$(2)\Rightarrow (1)$' 
and `$(4) \Rightarrow (3)$' are trivial.

(Proof of $(1) \Rightarrow (4)$)
Let $M^+ = \Kmodel{W^+}{\R^+}{V^+}$ be a $\lambda$-extension of a GL-model.
We prove, 
by induction on the {\sDa}-proof of $\DSEQ{\PS}{\PH}$,
that 
$\bigwedge \PS \IMP \bigvee \PH$ is true at the bottom $t_\lambda$ of $M^+$.
Here we show the case that 
$(\DSEQ{\PS}{\PH}) = (\DSEQ{\Box\PS'}{\Box\PH'})$ 
and the last inference rule is (\RuleDa);
that is, we have the hypothesis
$(\dag) \ {\sGL} \vdash \SEQ{\PS', \Box\PS'}{\Box\PH'}$.
We assume 
$\bigwedge\Box\PS'$ is true at $t_\lambda$;
then, the goal is to show 
$\bigvee\Box\PH'$ is true at $t_\lambda$.
By the assumption and the definition of $\R^+$, 
we have both
$\bigwedge\PS'$
and 
$\bigwedge\Box\PS'$
are true at any world in the tail.
Then the hypothesis $(\dag)$
and Proposition~\ref{prop:trivial-GL}
imply that 
$\bigvee\Box\PH'$ is eventually always true in the 
tail $\{t_\alpha \mid 0<\alpha<\lambda\}$.
Therefore, there must be a formula $\Box\varphi \in \Box\PH'$
such that the condition 
(\ddag) $(\forall \alpha < \lambda)(\alpha < \exists \beta < \lambda) 
(V^+(t_\beta,\Box\varphi)=\T)$
holds
because otherwise
every $\Box\varphi \in \Box\PH'$ satisfies 
$(\exists \alpha < \lambda)(\alpha < \forall \beta < \lambda) 
(V^+(t_\beta,\Box\varphi)=\F)$,
and this contradicts the fact that 
$\bigvee\Box\PH'$ is eventually always true.
This formula $\Box\varphi$ is also true at the bottom $t_\lambda$
because the condition (\ddag) implies 
$(\forall \alpha < \lambda)[(V^+(t_\alpha,\varphi)=\T)$
and
$(\forall x \RI^+ t_\alpha)(V^+(x,\varphi)=\T)]$.
Thus, we have 
$V^+(t_\lambda, \bigvee\Box\PH')=\T$.

(Proof of $(3) \Rightarrow (2)$)
We show the contraposition.
Suppose analytic $\sDa \not\vdash \DSEQ{\PS}{\PH}$.
We will construct a constant $\lambda$-extension of a GL-model 
such that $\bigwedge \PS \IMP \bigvee \PH$ is false at the bottom.
First we apply Lemma~\ref{lm:saturation}(3) to $\DSEQ{\PS}{\PH}$, 
and we get a \sDa-saturation $\DSEQ{\PS^+}{\PH^+}$.
Let $\PS^\star = \{\psi \mid \Box\psi \in \PS^+\}$
and
$\PH^\star = \{\varphi \mid \Box\varphi \in \PH^+\}$.
The GL-sequent 
$\SEQ{\PS^\star, \Box\PS^\star}{\Box\PH^\star}$
is not cut-free provable in {\sGL}
because otherwise analytic $\sDa \vdash \DSEQ{\PS^+}{\PH^+}$
by the rules (\RuleDa) and weakening.
Then applying Lemma~\ref{lm:saturation}(1), we get 
a \sGL-saturation $\SEQ{\PI}{\SI}$ of $\SEQ{\PS^\star, \Box\PS^\star}{\Box\PH^\star}$.
We show that $\SEQ{\PI}{\SI}$ is $\Box$-left-saturated:
\begin{quote}
Suppose $\Box\pi \in \PI$.
Then $\Box\pi \in \SF{\PS,\PH}$
(because $\SEQ{\PI}{\SI}$ is a 
\sGL-saturation of $\SEQ{\PS^\star, \Box\PS^\star}{\Box\PH^\star}$
which consists of the elements of $\SF{\PS,\PH}$),
and we have either 
$\Box\pi \in \PS^+$
or 
$\Box\pi \in \PH^+$
(because $\DSEQ{\PS^+}{\PH^+}$ is a {\sDa}-saturation of $\DSEQ{\PS}{\PH}$).
But the latter condition $\Box\pi \in \PH^+$ does not hold
because, if $\Box\pi \in \PH^+$, then $\Box\pi \in \Box\PH^\star$
and $\SEQ{\PI}{\SI}$ is cut-free provable 
from the initial sequent $\SEQ{\Box\pi}{\Box\pi}$ by 
the weakening rule in {\sGL}.
Therefore the former condition $\Box\pi \in \PS^+$ holds,
and we have $\pi \in \PS^\star \subseteq \PI$. 
\end{quote}
The remaining part is similar to the proof of 
`$(3) \Rightarrow (2)$' of Theorem~\ref{th:cut-free-completeness-S}.
We get a GL-model 
$\Kmodel{W}{\R}{V}$ such that 
$\bigwedge \PI \IMP \bigvee \SI$
is false at a world $t_0$, 
by the cut-free completeness of {\sGL}.
Then we define a constant $\lambda$-extension of 
$\Kmodel{W^+}{\R^+}{V^+}$  of $\Kmodel{W}{\R}{V}$ 
where $\{t_\alpha \mid 0 < \alpha < \lambda\}$ 
is the tail, $t_\lambda$ is the bottom,
$V^+(t_\alpha, p) = V(t_0, p)$ for any $\alpha < \lambda$,
and 
$V^+(t_\lambda, p) = \T  \Longleftrightarrow p \in \PS^+$.
(In Figure~\ref{Fig:Completeness} (right),
the essence of $\Kmodel{W^+}{\R^+}{V^+}$
is illustrated.)
We consider four properties below, where $\alpha < \lambda$.
(L) {\em If $\varphi \in \PI$, then $V^+(t_\alpha, \varphi) = \T$.}
(R) {\em If $\varphi \in \SI$, then $V^+(t_\alpha, \varphi) = \F$.}
(L$\lambda$) {\em If $\varphi \in \PS^+$, then $V^+(t_\lambda, \varphi) = \T$.}
(R$\lambda$) {\em If $\varphi \in \PH^+$, then $V^+(t_\lambda, \varphi) = \F$.}
The properties (L) and (R)
are proved in the same way as that in Theorem~\ref{th:cut-free-completeness-S} for any ordinal number $\alpha < \lambda$.
Note that 
$\SEQ{\PI}{\SI}$ is $\Box$-left-saturated 
as shown above
(while 
$\DSEQ{\PS^+}{\PH^+}$ may not).
Finally, we prove the properties (L$\lambda$) and (R$\lambda$)
by induction on $\varphi$.
In the case of $\varphi = \Box\psi \in \PS^+$, for example,
the proof is done as follows. 
\begin{equation}
\Box\psi \in \PS^+
\ \stackrel{\text{def.~of~}\PI}{\Longrightarrow} \ 
\psi \in \PI
\ \stackrel{\text{(L)}}{\Longrightarrow} \ 
V^+(t_\alpha, \psi) = \T \ \text{ for any } \alpha < \lambda.
\tag{\dag}
\end{equation}
\begin{equation}
  \begin{split} 
\Box\psi \in \PS^+
\ \stackrel{\text{def.~of~}\PI}{\Longrightarrow} \ 
\Box\psi \in \PI
\ \stackrel{\text{(L)}}{\Longrightarrow} \ 
V^+(t_0, \Box\psi) = \T
\ \Longrightarrow \ 
\\
V^+(x,\psi)=\T \  \text{ for any } x \in W \text{ such that } t_0 \R x.
\end{split}
\tag{\ddag}
\end{equation}
\begin{equation*}
\Box\psi \in \PS^+
\ \stackrel{(\dag)(\ddag)}{\Longrightarrow} \ 
V^+(t_\lambda,\Box\psi)=\T.
\end{equation*}
\end{proof}

\begin{theorem}[Soundness and cut-free completeness of {\sDb}]
\label{th:cut-free-completeness-Db}
Let $\lambda$ be a limit ordinal.
For any D-sequent $\DSEQ{\PS}{\PH}$, the following four conditions are equivalent.
\begin{itemize}
\item[{\rm (1)}]
$\sDb\vdash \DSEQ{\PS}{\PH}$.

\item[{\rm (2)}]
Cut-free $\sDb \vdash \DSEQ{\PS}{\PH}$.

\item[{\rm (3)}]
$\bigwedge \PS \IMP \bigvee \PH$ is true 
at the bottom of any constant $\lambda$-extension of any GL-model.

\item[{\rm (4)}]
$\bigwedge \PS \IMP \bigvee \PH$ is true 
at the bottom of any $\lambda$-extension of any GL-model.
\end{itemize}
\end{theorem}

\begin{proof}
Proof can be done in a similar way to Theorem~\ref{th:cut-free-completeness-Da}.
Points of differences between two proofs are below.
In the proof of `$(1)\Rightarrow(4)$',
we use `$(1)\Rightarrow(6)$' of Theorem~\ref{th:cut-free-completeness-S}
instead of Proposition~\ref{prop:trivial-GL}.
In the proof of `$(3)\Rightarrow(2)$',
we use a \sDb-saturation $\DSEQ{\PS^+}{\PH^+}$ of $\DSEQ{\PS}{\PH}$ 
(by Lemma~\ref{lm:saturation}(4))
to define the valuation $V^+$ at the bottom;
and 
we use an \sS-saturation $\SSEQ{\PI}{\SI}$ of
$\SSEQ{\Box\PS^\star}{\Box\PH^\star}$ 
(and the GL-sequent $\SEQ{\PI}{\SI}$ which is not cut-free provable in {\sGL}) 
to define the valuation $V^+$ at the tail.
\end{proof}


\section{Hilbert-style systems for {\bf D}}
\label{sec:HilbertD}

Corresponding to {\sDa} and {\sDb}, 
we introduce new Hilbert-style proof systems,
called {\HDa} and {\HDb}:
\begin{quote}
Axioms of {\HDa}: 
Tautologies, 
and formulas of the form 
$\bigwedge \Box\GA \IMP \bigvee \Box\DE$
such that 
$\bigwedge(\GA, \Box\GA) \IMP \bigvee \Box\DE$
is a theorem of {\HGL}.
\\
Rule of {\HDa}: Modus ponens.

Axioms of {\HDb}: 
Tautologies, 
and formulas of the form 
$\bigwedge \Box\GA \IMP \bigvee \Box\DE$
which is a  theorem of {\HS}.
\\
Rule of {\HDb}: Modus ponens.
\end{quote}
Similarly to 
Propositions~\ref{prop:Corespondence-GL} and \ref{prop:Corespondence-S},
we have the following.

\begin{proposition}
\label{prop:Corespondence-Da}
$\sDa \vdash \DSEQ{\GA}{\DE}$
if and only if 
$\HDa \vdash \bigwedge \GA \IMP \bigvee \DE$.
\end{proposition}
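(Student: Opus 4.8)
The proof follows the familiar pattern used for Propositions~\ref{prop:Corespondence-GL} and \ref{prop:Corespondence-S}, establishing the two implications separately. Throughout one uses that {\HDa} contains all tautologies and is closed under modus ponens, hence under tautological consequence, and that the cut rule is available in {\sDa}; this latter is harmless, since the proposition concerns {\sDa} with all of its rules.

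\emph{From {\sDa} to {\HDa}.} I would argue by induction on a {\sDa}-proof $P$ of $\DSEQ{\GA}{\DE}$, proving simultaneously two statements: for every GL-sequent $\SEQ{\GA'}{\DE'}$ occurring in $P$ we have $\HGL\vdash\bigwedge\GA'\IMP\bigvee\DE'$, and for every D-sequent $\DSEQ{\GA'}{\DE'}$ occurring in $P$ we have $\HDa\vdash\bigwedge\GA'\IMP\bigvee\DE'$. The first statement is immediate: the only rules of {\sDa} producing a GL-sequent are the initial sequents, the {\LK}-rules, and $(\RuleGL)$, so any subproof ending in a GL-sequent is already a {\sGL}-proof, and Proposition~\ref{prop:Corespondence-GL} applies. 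For the second statement, the initial D-sequents and the {\LKD}-rules (weakening, ($\IMP$L), ($\IMP$R), and cut) are handled by pure propositional reasoning inside {\HDa}, exactly as in classical logic. The only non-routine case is $(\RuleDa)$: if $\DSEQ{\Box\GA'}{\Box\DE'}$ is inferred from the GL-sequent $\SEQ{\GA',\Box\GA'}{\Box\DE'}$, then by the first statement $\HGL\vdash\bigwedge(\GA',\Box\GA')\IMP\bigvee\Box\DE'$; but this is precisely the side condition that makes $\bigwedge\Box\GA'\IMP\bigvee\Box\DE'$ an axiom of {\HDa}, whence $\HDa\vdash\bigwedge\Box\GA'\IMP\bigvee\Box\DE'$.

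\emph{From {\HDa} to {\sDa}.} Here I would first show, by induction on the {\HDa}-proof, that $\sDa\vdash\DSEQ{}{\chi}$ holds for every theorem $\chi$ of {\HDa}. Tautological axioms are provable already in {\LKD}. For an axiom $\chi = (\bigwedge\Box\GA'\IMP\bigvee\Box\DE')$ whose side condition $\HGL\vdash\bigwedge(\GA',\Box\GA')\IMP\bigvee\Box\DE'$ holds, Proposition~\ref{prop:Corespondence-GL} gives $\sGL\vdash\SEQ{\GA',\Box\GA'}{\Box\DE'}$; then $(\RuleDa)$ yields $\sDa\vdash\DSEQ{\Box\GA'}{\Box\DE'}$, and a few {\LKD}-steps (($\IMP$R), and the combinations realizing $\OR$) give $\sDa\vdash\DSEQ{}{\chi}$. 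Modus ponens is simulated by ($\IMP$L) together with cut: from $\sDa\vdash\DSEQ{}{\varphi\IMP\psi}$ and $\sDa\vdash\DSEQ{}{\varphi}$ one first derives $\sDa\vdash\DSEQ{\varphi\IMP\psi}{\psi}$ and then cuts. Finally, from $\sDa\vdash\DSEQ{}{\bigwedge\GA\IMP\bigvee\DE}$ one obtains $\sDa\vdash\DSEQ{\GA}{\DE}$ by cutting against the {\LKD}-provable sequent $\DSEQ{\bigwedge\GA\IMP\bigvee\DE,\,\GA}{\DE}$.

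There is no genuine obstacle here---the statement is routine, like its companions---but the point one must not overlook is that the GL-sequents appearing inside a {\sDa}-proof are exactly the {\sGL}-provable ones, so that Proposition~\ref{prop:Corespondence-GL} can be applied to the premise of each $(\RuleDa)$-inference, and that this premise is literally the side condition occurring in the definition of the axioms of {\HDa}. This exact match is what makes the sequent calculus {\sDa} and the Hilbert-style system {\HDa} correspond.
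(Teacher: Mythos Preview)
Your proposal is correct and follows essentially the same approach as the paper, which merely says the propositions ``are proved by induction'' and notes that Propositions~\ref{prop:Corespondence-GL}, \ref{prop:Corespondence-S} and Theorems~\ref{th:GL-conservativity}, \ref{th:S-conservativity}, \ref{th:GL-extension} are used. You have correctly identified the key observation (that GL-sequents inside a {\sDa}-proof form {\sGL}-proofs, so Proposition~\ref{prop:Corespondence-GL} applies to the premise of each $(\RuleDa)$, matching exactly the side condition in the {\HDa}-axiom) and filled in the routine details the paper omits.
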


\begin{proposition}
\label{prop:Corespondence-Db}
$\sDb \vdash \DSEQ{\GA}{\DE}$
if and only if
$\HDb \vdash \bigwedge \GA \IMP \bigvee \DE$.
\end{proposition}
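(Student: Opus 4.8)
The plan is to prove the two implications separately, following the pattern of Propositions~\ref{prop:Corespondence-GL} and~\ref{prop:Corespondence-S}, and using freely that $\sDb$ has the cut rule.

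For the ``only if'' direction I would argue by induction on a $\sDb$-proof of $\DSEQ{\GA}{\DE}$. By Theorems~\ref{th:GL-conservativity} and~\ref{th:S-conservativity}, every GL-sequent occurring in such a proof is already derivable in $\sGL$, and every S-sequent already derivable in $\sS$; hence Propositions~\ref{prop:Corespondence-GL} and~\ref{prop:Corespondence-S} yield $\HGL\vdash\bigwedge\PS\IMP\bigvee\PH$ for each such GL-sequent $\SEQ{\PS}{\PH}$ and $\HS\vdash\bigwedge\PS\IMP\bigvee\PH$ for each such S-sequent $\SSEQ{\PS}{\PH}$. It then remains to track the D-sequents: the initial D-sequents $\DSEQ{\varphi}{\varphi}$ and $\DSEQ{\BOT}{}$ translate to tautologies; the rules of $\LKD$ (weakening, ($\IMP$L), ($\IMP$R), cut) preserve $\HDb$-provability of the translated formula because $\HDb$ contains all tautologies and is closed under modus ponens, so full classical propositional reasoning is available; and the rule $(\RuleDb)$, which concludes $\DSEQ{\Box\GA}{\Box\DE}$ from the S-sequent $\SSEQ{\Box\GA}{\Box\DE}$, is sound because $\HS\vdash\bigwedge\Box\GA\IMP\bigvee\Box\DE$ makes $\bigwedge\Box\GA\IMP\bigvee\Box\DE$ an axiom of $\HDb$ by definition.

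For the ``if'' direction I would first show $\sDb\vdash\DSEQ{}{\chi}$ for every axiom $\chi$ of $\HDb$. If $\chi$ is a tautology then $\LK\vdash\SEQ{}{\chi}$, so $\sGL\vdash\SEQ{}{\chi}$, and Theorem~\ref{th:GL-extension} gives $\sDb\vdash\DSEQ{}{\chi}$. If $\chi=\bigwedge\Box\GA\IMP\bigvee\Box\DE$ with $\HS\vdash\chi$, then Proposition~\ref{prop:Corespondence-S} gives $\sS\vdash\SSEQ{\Box\GA}{\Box\DE}$; applying $(\RuleDb)$ yields $\sDb\vdash\DSEQ{\Box\GA}{\Box\DE}$, and the propositional rules of $\LKD$ then give $\sDb\vdash\DSEQ{}{\chi}$. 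Next, the set $\{\varphi\mid\sDb\vdash\DSEQ{}{\varphi}\}$ is closed under modus ponens, via ($\IMP$L), weakening, and the cut rule on D-sequents; hence $\HDb\vdash\varphi$ implies $\sDb\vdash\DSEQ{}{\varphi}$. Finally, from $\HDb\vdash\bigwedge\GA\IMP\bigvee\DE$ we get $\sDb\vdash\DSEQ{}{\bigwedge\GA\IMP\bigvee\DE}$, and since $\LKD\vdash\DSEQ{\bigwedge\GA\IMP\bigvee\DE,\GA}{\DE}$ (a classically valid propositional D-sequent), a single cut delivers $\sDb\vdash\DSEQ{\GA}{\DE}$.

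No step is genuinely difficult; the argument is bookkeeping layered on the already-established correspondences for $\sGL/\HGL$ and $\sS/\HS$, with the cut rule of $\sDb$ used throughout. The only point needing care is the opening observation of the first direction --- that inside a $\sDb$-proof the GL- and S-sequent sub-derivations never invoke $(\RuleDb)$ or the D-sequent rules, so that Theorems~\ref{th:GL-conservativity} and~\ref{th:S-conservativity} legitimately let us import Propositions~\ref{prop:Corespondence-GL} and~\ref{prop:Corespondence-S} --- after which both directions reduce to routine classical propositional manipulations.
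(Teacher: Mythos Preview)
Your proposal is correct and follows essentially the same route as the paper: induction on the $\HDb$-proof for the if-part (using Proposition~\ref{prop:Corespondence-S}, Theorem~\ref{th:GL-extension}, and the $(\RuleDb)$ rule), and induction on the $\sDb$-proof for the only-if part (using the conservativity Theorems~\ref{th:GL-conservativity} and~\ref{th:S-conservativity} to import Propositions~\ref{prop:Corespondence-GL} and~\ref{prop:Corespondence-S}). The paper's own proof sketch is terser but names exactly the same ingredients you rely on.
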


These are proved by induction.
To be exact, for the if-parts,
we show 
`$\sDi \vdash (\DSEQ{\!}{\varphi})$
if 
$\HDi \vdash \varphi$'
by induction on the {\HDi}-proof of $\varphi$.
Note that 
Propositions~\ref{prop:Corespondence-GL} and \ref{prop:Corespondence-S}
and 
Theorems~\ref{th:GL-conservativity}, \ref{th:S-conservativity}, and \ref{th:GL-extension}
are used in the proofs.

Now the logic {\bf D} has three Hilbert-style systems
{\HD}, {\HDa} and {\HDb};
and the equivalence of these is shown 
by combining 
Proposition~\ref{prop:PreliminaryD} 
(for $\lambda = \omega$),
Theorems~\ref{th:cut-free-completeness-Da}
and \ref{th:cut-free-completeness-Db} 
(for $\lambda = \omega$),
and
Propositions~\ref{prop:Corespondence-Da}
and 
\ref{prop:Corespondence-Db}.
But it is a natural question whether we can show this equivalence syntactically (since the above combination uses soundness and completeness).
Syntactic proof of equivalence between
{\HDa} and {\HDb}
can be obtained by Theorem~\ref{th:sDa-sDb}
and 
Propositions~\ref{prop:Corespondence-Da}
and 
\ref{prop:Corespondence-Db}.
In the following, 
we show a syntactic proof of equivalence between
{\HDa} and {\HD};
consequently the equivalence between the three systems is shown syntactically.

\begin{theorem}
\label{th:Correspondence-D-Da}
$\HDa \vdash \varphi$
if and only if 
$\HD \vdash \varphi$.
\end{theorem}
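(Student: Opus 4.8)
The proof is a mutual implication between provability in \HD{} and in \HDa{}, and the natural strategy is to compare axioms. For the direction ``$\HDa \vdash \varphi$ implies $\HD \vdash \varphi$'' it suffices (since both systems have only modus ponens as a rule, and both include all tautologies) to show that every axiom of \HDa{} is a theorem of \HD{}. So I would take a formula $\bigwedge\Box\GA \IMP \bigvee\Box\DE$ with $\HGL \vdash \bigwedge(\GA,\Box\GA)\IMP\bigvee\Box\DE$, and derive it in \HD{}. By the ($\Box$)-rule and the distribution axiom of \HGL{} (which are available \emph{inside} \HD{} only in the sense that all \HGL-theorems are \HD-axioms), from the \HGL-theorem I get $\HD \vdash \Box\bigl(\bigwedge(\GA,\Box\GA)\bigr)\IMP\Box\bigvee\Box\DE$, hence $\HD \vdash \bigwedge\Box\GA\AND\bigwedge\Box\Box\GA \IMP \Box\bigvee\Box\DE$. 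Using the \HGL-theorem $\Box\gamma \IMP \Box\Box\gamma$ I can drop the $\Box\Box\GA$ conjunct, obtaining $\HD \vdash \bigwedge\Box\GA \IMP \Box\bigvee\Box\DE$. Now I apply the characteristic \HD-axiom $\Box(\Box\psi_1\OR\cdots\OR\Box\psi_k)\IMP \Box\psi_1\OR\cdots\OR\Box\psi_k$ (this is exactly the shape $\Box\bigvee\Box\DE$, after massaging the empty case via $\NOT\Box\BOT$) to conclude $\HD \vdash \bigwedge\Box\GA\IMP\bigvee\Box\DE$, as desired. The degenerate cases $\DE=\emptyset$ (use $\NOT\Box\BOT$) and $\GA=\emptyset$ must be checked separately but are routine.

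For the converse, ``$\HD \vdash \varphi$ implies $\HDa \vdash \varphi$'', I again reduce to checking that each \HD-axiom is provable in \HDa{}. The tautologies are \HDa-axioms outright. For a \HGL-theorem $\psi$: one shows $\HDa \vdash \psi$, and the cleanest route is to invoke the established correspondence, namely $\sGL\vdash(\SEQ{}{\psi})$ by Proposition~\ref{prop:Corespondence-GL}, then $\sDa\vdash(\DSEQ{}{\psi})$ by Theorem~\ref{th:GL-extension}, then $\HDa\vdash\psi$ by Proposition~\ref{prop:Corespondence-Da}. (Alternatively one can give a direct syntactic argument, but going through the sequent calculus is shortest and all the cited results precede this theorem.) For the axiom $\NOT\Box\BOT$: this is $\bigwedge\Box\emptyset \IMP \bigvee\Box\{\BOT\}$ up to classical reasoning about the empty conjunction and $\Box\BOT$, and one checks $\HGL\vdash \bigwedge(\emptyset,\Box\emptyset)\IMP\bigvee\Box\{\BOT\}$, i.e.\ $\HGL \vdash \Box\BOT$ ... which is false, so instead I treat $\NOT\Box\BOT$ as the instance coming from the \HDa-axiom with $\GA=\DE=\emptyset$: the side condition becomes $\HGL\vdash\BOT\IMP(\text{empty disjunction})$, trivially true, yielding the \HDa-axiom $\T\IMP\F = \NOT\Box\BOT$ in the appropriate reading; the bookkeeping of empty $\bigwedge$ and $\bigvee$ here is the only slightly delicate point and should be spelled out. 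For the axiom $\Box(\Box\psi\OR\Box\chi)\IMP\Box\psi\OR\Box\chi$: this is $\bigwedge\Box\GA\IMP\bigvee\Box\DE$ with $\GA = \{\Box\psi\OR\Box\chi\}$ and $\DE=\{\psi,\chi\}$, and the side condition to verify is $\HGL\vdash (\Box\psi\OR\Box\chi)\AND\Box(\Box\psi\OR\Box\chi)\IMP\Box\psi\OR\Box\chi$, which holds since the first conjunct already gives the conclusion.

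Finally, since both systems are closed under modus ponens and every axiom of each is derivable in the other, a trivial induction on proof length gives $\HDa\vdash\varphi \Leftrightarrow \HD\vdash\varphi$.

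\textbf{Main obstacle.} The real content is the direction $\HDa\vdash\varphi \Rightarrow \HD\vdash\varphi$, specifically turning the semantic-looking side condition ``$\HGL\vdash\bigwedge(\GA,\Box\GA)\IMP\bigvee\Box\DE$'' into an actual \HD-derivation of $\bigwedge\Box\GA\IMP\bigvee\Box\DE$. The key lemma is essentially: if $\HGL\vdash\bigwedge(\GA,\Box\GA)\IMP\theta$ then $\HD\vdash\bigwedge\Box\GA\IMP\Box\theta$ — this packages the necessitation of a \HGL-theorem together with the \HGL-fact $\Box\gamma\IMP\Box\Box\gamma$ and distribution, all of which live inside \HD{} because \HGL-theorems are \HD-axioms. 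After that, the characteristic \HD-axiom $\Box\bigvee\Box\DE \IMP \bigvee\Box\DE$ finishes the job. The one genuinely fiddly part, as noted, is the handling of empty $\GA$ and empty $\DE$ and the reconciliation with the $\NOT\Box\BOT$ axiom; everything else is routine propositional and modal bookkeeping.
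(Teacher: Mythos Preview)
Your overall strategy matches the paper's almost exactly: in both directions you reduce to showing that each system's axioms are derivable in the other, and for the substantive direction $\HDa \Rightarrow \HD$ your chain (necessitate inside \HGL, use $\Box\gamma\IMP\Box\Box\gamma$, then apply the $n$-ary reflection $\Box\bigvee\Box\DE\IMP\bigvee\Box\DE$ in \HD) is precisely the paper's argument; the paper invokes Beklemishev's Lemma~2.7.1 for the $n$-ary reflection, which you correctly flag as the place where $\NOT\Box\BOT$ handles the empty $\DE$ case.

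There is one genuine slip. Your treatment of $\NOT\Box\BOT$ in the direction $\HD\Rightarrow\HDa$ is wrong in both attempts. With $\GA=\DE=\emptyset$ the side condition is $\HGL\vdash\bigwedge(\emptyset,\Box\emptyset)\IMP\bigvee\Box\emptyset$, i.e.\ $\HGL\vdash\TOP\IMP\BOT$, which fails; and the resulting ``axiom'' $\TOP\IMP\BOT$ is not $\NOT\Box\BOT$ anyway. The correct instantiation is $\GA=\{\BOT\}$, $\DE=\emptyset$: then the \HDa-axiom reads $\Box\BOT\IMP\BOT$, and the side condition $\HGL\vdash(\BOT\AND\Box\BOT)\IMP\BOT$ is a tautology. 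This matches Example~\ref{ex:sDa-proof2}, which the paper points to.

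A minor methodological difference: for ``every \HGL-theorem is an \HDa-theorem'' you route through the sequent calculi (Prop.~\ref{prop:Corespondence-GL}, Thm.~\ref{th:GL-extension}, Prop.~\ref{prop:Corespondence-Da}), which is legitimate since all three results precede this theorem. The paper instead argues directly that each \HGL-axiom and the conclusion of the $(\Box)$-rule is, up to propositional equivalence, an instance of the \HDa-axiom scheme (e.g.\ the $K$-axiom is $\bigwedge\Box\{\alpha\IMP\beta,\alpha\}\IMP\Box\beta$ with the side condition $\HGL\vdash(\alpha\IMP\beta)\AND\alpha\AND\Box(\alpha\IMP\beta)\AND\Box\alpha\IMP\Box\beta$). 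Your detour is shorter to write; the paper's version is self-contained.
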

\begin{proof}
(Proof of only-if-part)
We show the following by induction on \HDa-proof of $\varphi$:
{\em If $\HDa \vdash \varphi$, then $\HD \vdash \varphi$.}
If $\varphi$ is the axiom
$\bigwedge \Box\GA \IMP \bigvee \Box\DE$,
an outline of the proof is as follows.
\begin{align}
\text{(1)}\quad &
{\HGL}\vdash \bigwedge(\GA,\Box\GA) \IMP \bigvee \Box\DE
\tag{By definition of the axiom}
\\
\text{(2)}\quad &
{\HGL}\vdash \bigwedge(\Box\GA,\Box\Box\GA) \IMP\Box\bigvee \Box\DE
\tag{By (1) and inference in {\HGL}}
\\
\text{(3)}\quad &
{\HGL}\vdash \bigwedge\Box\GA \IMP\Box\bigvee \Box\DE
\tag{By (2) and the fact ${\HGL}\vdash \Box\gamma \IMP \Box\Box\gamma$}
\\
\text{(4)}\quad &
{\HD}\vdash \Box\bigvee \Box\DE \IMP \bigvee \Box\DE
\tag{Lemma 2.7.1 in \cite{Beklemishev99}}
\\
\text{(5)}\quad &
{\HD}\vdash \bigwedge \Box\GA \IMP \bigvee \Box\DE
\tag{By (3), (4), and definition of the axioms of {\HD}.}
\end{align}

(Proof of if-part)
First we show the following claim by induction on the \HGL-proof of $\varphi$:
{\em If $\HGL \vdash \varphi$, then $\HDa \vdash \varphi$.}
A point is that the axioms 
$\Box(\varphi \IMP \psi) \IMP (\Box\varphi \IMP \Box\psi)$
and 
$\Box(\Box\varphi \IMP \varphi) \IMP \Box\varphi$
and the conclusion of $(\Box)$-rule are 
respectively equivalent (over classical propositional logic) to 
formulas of the form 
$\bigwedge \Box\GA \IMP \bigvee \Box\DE$
such that 
$\bigwedge (\GA, \Box\GA) \IMP \bigvee \Box\DE$
is a theorem of {\HGL}.
For example, 
$\Box(\varphi \IMP \psi) \IMP (\Box\varphi \IMP \Box\psi)$
is equivalent to 
$\bigwedge \Box(\varphi \IMP \psi, \varphi) \IMP \Box\psi$,
where 
$\bigwedge(\varphi\IMP\psi, \varphi, \Box(\varphi \IMP \psi, \varphi)) \IMP \Box\psi$
is a theorem of {\HGL}.
Then we show the following claim by induction on the \HD-proof of $\varphi$:
{\em If $\HD \vdash \varphi$, then $\HDa \vdash \varphi$.}
For the axioms
$\NOT\Box\BOT$
and 
$\Box(\Box\varphi \OR \Box\psi) \IMP \Box\varphi \OR \Box\psi$, 
proofs are done corresponding to the {\sDa}-proofs in 
Examples~\ref{ex:sDa-proof2} and \ref{ex:sDa-proof}.
\end{proof}


\section{Generalization}
\label{sec:generalization}

In this section, 
we generalize the completeness of {\HS}, {\HDa},  and {\HDb}.

In the following, $L$ denotes an arbitrary set of formulas.
We define five sets of formulas  depending on $L$.

\begin{definition}[$\MP{\cdot}, \SL{\cdot}, \DL{\cdot}, \DaL{\cdot}, \DbL{\cdot}$]
\textup{
\begin{quote}
$\MP{L}$ is the smallest set of formulas that 
contains $L$ and is closed under modus ponens;
\\
$\SL{L} = \MP{L \cup \{\Box\varphi \IMP \varphi \mid \varphi \in \FORMULA\}}$;
\\
$\DL{L} = \MP{L \cup \{\NOT\Box\BOT\} \cup 
\{\Box(\Box\varphi \OR \Box\psi) \IMP \Box\varphi \OR \Box\psi \mid \varphi, \psi\in \FORMULA\}}$;
\\
$\DaL{L} = \MP{
{\bf Taut}  \cup 
\{\bigwedge \Box\GA \IMP \bigvee \Box\DE \mid
\bigwedge(\GA, \Box\GA) \IMP \bigvee \Box\DE \in L\}}$; and 
\\
$\DbL{L} = \MP{
{\bf Taut}  \cup 
\{\bigwedge \Box\GA \IMP \bigvee \Box\DE \mid
\bigwedge \Box\GA \IMP \bigvee \Box\DE \in \SL{L}\}}$;
\end{quote}
where ${\bf Taut}$ is the set of tautologies.
}
\end{definition}
For example, 
$\SL{{\bf GL}}$, $\DL{{\bf GL}}$, $\DaL{{\bf GL}}$,  and $\DbL{{\bf GL}}$
are the sets of theorems of 
{\HS}, {\HD}, {\HDa}, and {\HDb}, respectively,
if {\bf GL} is the set of theorems of {\HGL}.

\begin{definition}
\textup{
Let ${\cal F}$ be a class of Kripke frames
and $L$ be a set of formulas.
We say that a Kripke model is an {\em ${\cal F}$-model}
if it is based on a frame in ${\cal F}$.
We say that $L$ is {\em characterized by} ${\cal F}$
if the following two conditions are equivalent for any formula $\varphi$.
\begin{itemize}
\item
$\varphi \in L$.
\item
$\varphi$ is true at any world of any ${\cal F}$-model.
\end{itemize}
Moreover,
we say that $L$ is 
{\em $\lambda$-tail-sound for ${\cal F}$}
if any formula in $L$ 
is eventually always true in the tail of
any $\lambda$-extension of any ${\cal F}$-model, 
where $\lambda$ is a limit ordinal.
}
\end{definition}
For example, 
{\bf GL}
is characterized by
and 
$\lambda$-tail-sound for 
the class of GL-frames.

The following are the generalization of 
the completeness of {\HS}, {\HDa},  and {\HDb}.

\begin{theorem}
\label{th:generalization-S}
Let $\lambda$ be a limit ordinal.
If $L$ is characterized by ${\cal F}$
and $\lambda$-tail-sound for ${\cal F}$,
then  the following five conditions are equivalent.
\begin{itemize}
\item[{\rm (1)}]
$\varphi \in \SL{L}$.

\item[{\rm (2)}]
$\varphi$ is true 
at the bottom of any strongly constant $\lambda$-extension of any ${\cal F}$-model.

\item[{\rm (3)}]
$\varphi$ is eventually always true in the tail of 
any strongly constant $\lambda$-extension of any ${\cal F}$-model.

\item[{\rm (4)}]
$\varphi$ is eventually always true in the tail of 
any constant $\lambda$-extension of any ${\cal F}$-model.

\item[{\rm (5)}]
$\varphi$ is eventually always true in the tail of 
any $\lambda$-extension of any ${\cal F}$-model.
\end{itemize}
\end{theorem}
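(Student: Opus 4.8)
The plan is to follow exactly the structure of the proof of Theorem~\ref{th:cut-free-completeness-S}, replacing the specific logic {\bf GL} with the abstract logic $L$, the class of GL-frames with ${\cal F}$, and the cut-free completeness of {\sGL} with the assumption that $L$ is characterized by ${\cal F}$. First I would observe that the implications $(5)\Rightarrow(4)\Rightarrow(3)$ are trivial, since every strongly constant tail-limit-extension is constant, and every constant one is a tail-limit-extension. For $(1)\Rightarrow(5)$, I would argue by induction on the modus-ponens derivation of $\varphi$ in $\SL{L}$. The base case splits: if $\varphi$ is a tautology, it is true everywhere, hence eventually always true in any tail; if $\varphi \in L$, it is eventually always true in the tail of any tail-limit-extension of any ${\cal F}$-model precisely because $L$ is assumed tail-sound for ${\cal F}$ (this is where the hypothesis that replaces the trivial Proposition~\ref{prop:trivial-GL} enters); if $\varphi$ is of the form $\Box\alpha \IMP \alpha$, I use the same observation as in Theorem~\ref{th:cut-free-completeness-S}, namely that $\Box\alpha\IMP\alpha$ is eventually always true in the tail of \emph{any} tail-limit-extension of \emph{any} Kripke model (if $\alpha$ holds at all $t_n$ then so does $\Box\alpha\IMP\alpha$; if $\alpha$ fails at some $t_n$ then $\Box\alpha\IMP\alpha$ holds at every $t_m$ with $m>n$ by the descending structure of the tail). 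The modus-ponens step is immediate because ``eventually always true in the tail'' is closed under modus ponens.

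For $(4)\Rightarrow(3)$, I would reuse verbatim the argument from Theorem~\ref{th:cut-free-completeness-S}: given a strongly constant tail-limit-extension with limit $t_\omega$ and tail $(t_0,t_1,\dots)$, I prove by induction on formula structure the two implications $V(t_\omega,\psi)=\T \Rightarrow (\exists n)(\forall i>n)V(t_i,\psi)=\T$ and $V(t_\omega,\psi)=\F \Rightarrow (\exists n)(\forall i>n)V(t_i,\psi)=\F$. Since the conditions (ET) and (EF) are mutually exclusive while (LT) is the negation of (LF), this yields the converse (ET)$\Rightarrow$(LT), which applied to $\varphi$ gives the claim. Nothing in this step uses ${\cal F}$, so it transfers without change.

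The main work is $(3)\Rightarrow(1)$, shown by contraposition. Suppose $\varphi \notin \SL{L}$. Since $\SL{L} = \MP{L \cup \{\Box\psi\IMP\psi\}}$, propositional reasoning lets me extract a finite set $\SI$ of formulas with $\bigl(\bigwedge_{\sigma\in\SI}(\Box\sigma\IMP\sigma)\bigr)\IMP\varphi \notin \MP{L}$, i.e.\ $\NOT\varphi \AND \bigwedge_{\sigma\in\SI}(\Box\sigma\IMP\sigma)$ is $L$-consistent; equivalently the ``GL-sequent analogue'' $\bigwedge{\rm ref}(\SI) \AND \NOT\varphi$ is not in $L$, so by the fact that $L$ is characterized by ${\cal F}$ there is an ${\cal F}$-model and a world $t_0$ at which all of ${\rm ref}(\SI)$ is true and $\varphi$ is false. (To make this rigorous one works with a suitable $\IMP$-saturated and $\Box$-left-saturated set of formulas built from the subformulas of $\varphi$ and the $\Box\sigma\IMP\sigma$, exactly as the $\sS$-saturation in Lemma~\ref{lm:saturation}(2); the semantic ingredient provided by ``$L$ characterized by ${\cal F}$'' plays the role that cut-free completeness of {\sGL} played before.) Then I take the strongly constant tail-limit-extension with tail $(t_0,t_1,t_2,\dots)$ and limit $t_\omega$, and prove by induction on formula structure that the truth values at every tail world and at $t_\omega$ are forced by the saturated set, so that $\varphi$ is false everywhere in the tail; hence $\varphi$ is not eventually always true there, contradicting (3). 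The hard part is isolating the finite set $\SI$ and setting up the saturation so that the ``$\Box$-left-saturation'' needed to handle $\Box$-subformulas in the inductive truth-lemma is available from a purely ${\cal F}$-semantic model of $L$ --- i.e.\ checking that the abstraction from {\bf GL} to a general characterized $L$ does not disturb the saturation machinery; but since that machinery (Lemma~\ref{lm:saturation} and the truth-lemma inductions) never used transitivity or converse well-foundedness of the GL-frame, only the existence of the base model for the sequent calculus of $L$, the generalization goes through.
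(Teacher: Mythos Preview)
Your approach matches the paper's, which literally instructs the reader to rerun the proof of Theorem~\ref{th:cut-free-completeness-S} under the dictionary of Table~\ref{table:replace}, with the hypotheses ``characterized by ${\cal F}$'' and ``tail-sound for ${\cal F}$'' standing in for Theorem~\ref{th:cut-free-completeness-GL} and Proposition~\ref{prop:trivial-GL}. Two small repairs: your ``$(4)\Rightarrow(3)$'' paragraph is mislabelled (you have imported the numbering from Theorem~\ref{th:cut-free-completeness-S}, where the extra cut-free condition shifts everything by one; the LT/ET argument you give actually proves $(3)\Leftrightarrow(2)$ here, and you must state the direction $(2)\Rightarrow(3)$---your LT$\Rightarrow$ET---to close the cycle through condition~(2)); and in the completeness step the phrase ``extract a finite set $\SI$'' is backwards (from $\varphi\notin\SL{L}$ one deduces that \emph{no} finite $\SI$ makes $\bigl(\bigwedge{\rm ref}(\SI)\bigr)\IMP\varphi$ land in $L$, not that a special one is extracted), but the $\sS$-style saturation in your parenthetical is the correct argument and is exactly the paper's route.
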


\begin{theorem}
\label{th:generalization-D}
Let $\lambda$ be a limit ordinal.
If $L$ is characterized by ${\cal F}$
and $\lambda$-tail-sound for ${\cal F}$,
then  the following four conditions are equivalent.
\begin{itemize}
\item[{\rm (1)}]
$\varphi \in \DaL{L}$.

\item[{\rm (2)}]
$\varphi \in \DbL{L}$.

\item[{\rm (3)}]
$\varphi$ is true 
at the bottom of any constant $\lambda$-extension of any ${\cal F}$-model.

\item[{\rm (4)}]
$\varphi$ is true 
at the bottom of any $\lambda$-extension of any ${\cal F}$-model.
\end{itemize}
\end{theorem}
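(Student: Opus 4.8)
The plan is to mimic the proofs of Theorems~\ref{th:cut-free-completeness-Da} and \ref{th:cut-free-completeness-Db}, but with the role of ``cut-free provability in {\sGL}'' replaced by ``membership in $L$'' (equivalently, truth at every world of every ${\cal F}$-model). The implication $(4)\Rightarrow(3)$ is trivial, and $(1)\Leftrightarrow(2)$ can be obtained purely syntactically by adapting the argument of Theorem~\ref{th:sDa-sDb} together with Theorem~\ref{th:generalization-S}: a formula $\bigwedge\Box\GA\IMP\bigvee\Box\DE$ lies in $\SL{L}$ iff, by Lemma~\ref{lm:for-sDb-sDa-I} transcribed to the Hilbert setting, there is a finite $\SI$ with $\bigwedge({\rm ref}(\SI),\GA,\Box\GA)\IMP\bigvee\Box\DE\in L$, and then the $2^n$-fold modus-ponens argument of Lemma~\ref{lm:for-sDb-sDa-II} shows it lies in $\DaL{L}$; the converse inclusion $\DaL{L}\subseteq\DbL{L}$ is immediate since every axiom of $\DaL{L}$ is already an axiom of $\DbL{L}$ by $L\subseteq\SL{L}$ and ${\HGL}$-reasoning. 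So it suffices to prove $(1)\Rightarrow(4)$ and $(3)\Rightarrow(1)$.

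For $(1)\Rightarrow(4)$, I would argue by induction on the $\DaL{L}$-proof of $\varphi$. Tautologies are true everywhere, and modus ponens is handled because truth at a fixed world is preserved by it. The one real case is an axiom $\bigwedge\Box\GA\IMP\bigvee\Box\DE$ with $\bigwedge(\GA,\Box\GA)\IMP\bigvee\Box\DE\in L$. Fix a tail-limit-extension $M^+$ of an ${\cal F}$-model, with tail $(t_0,t_1,\dots)$ and limit $t_\omega$; assume $\bigwedge\Box\GA$ true at $t_\omega$. By the definition of $\R^+$, the accessible worlds of $t_\omega$ are exactly those accessible from every $t_i$, so $\bigwedge\GA$ and $\bigwedge\Box\GA$ are true at every $t_i$. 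Since $L$ is tail-sound for ${\cal F}$, the axiom formula $\bigwedge(\GA,\Box\GA)\IMP\bigvee\Box\DE$ is eventually always true in the tail, hence $\bigvee\Box\DE$ is true at infinitely many $t_i$, so some $\Box\delta\in\Box\DE$ is true at infinitely many $t_n$; as each successor $x$ of $t_\omega$ is also a successor of such a $t_n$, $\delta$ is true at every successor of $t_\omega$, giving $\bigvee\Box\DE$ true at $t_\omega$. This is precisely the tail-soundness hypothesis stepping in where ``a tail-limit-extension of a GL-model is a GL-model'' was used in Proposition~\ref{prop:trivial-GL}.

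For $(3)\Rightarrow(1)$ I would prove the contrapositive, reusing the countermodel construction of Theorem~\ref{th:cut-free-completeness-Da}. Suppose $\varphi\notin\DaL{L}$; by Proposition~\ref{prop:Corespondence-Da} (in its relativized form) this is a $\DSEQ{}{\varphi}$ unprovable in the calculus ``{\sGL} with $L$-axioms'', and a saturation argument identical to Lemma~\ref{lm:saturation}(3) produces a $\Dai$-saturated D-sequent $\DSEQ{\PS^+}{\PH^+}$; setting $\PS^\star,\PH^\star$ as the un-boxed parts, the GL-sequent $\SEQ{\PS^\star,\Box\PS^\star}{\Box\PH^\star}$ is not derivable from $L$-axioms, and since $L$ is characterized by ${\cal F}$ the cut-free-completeness step is replaced by: $\bigwedge(\PS^\star,\Box\PS^\star)\IMP\bigvee\Box\PH^\star\notin L$, so it is false at some world $t_0$ of an ${\cal F}$-model. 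Saturating this sequent, verifying $\Box$-left-saturation exactly as before, and building the constant tail-limit-extension with $V^+(t_\omega,p)=\T\iff p\in\PS^+$, the usual induction on subformulas gives $\varphi$ false at $t_\omega$. The main obstacle is making sure that every appeal to a syntactic fact about {\sGL}/cut-free provability in the original proofs is legitimately replaced by the semantic characterization of $L$ by ${\cal F}$; in particular the $\Box$-left-saturation lemma for $\SEQ{\PI}{\SI}$ must be re-derived from the $\Dai$-saturation of $\DSEQ{\PS^+}{\PH^+}$ without any special structure of GL-frames, and one must check that ``characterized by ${\cal F}$'' indeed yields closure of $L$ under the inferences implicitly used (modus ponens, and the presence of all tautologies), which the paper has already noted holds whenever $L$ is characterized by a class of frames.
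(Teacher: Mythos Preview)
Your proposal is essentially correct and, for the implications $(1)\Rightarrow(4)$ and $(3)\Rightarrow(1)$, follows exactly the paper's route: the paper's proof literally says to rewrite the arguments of Section~\ref{sec:cut-free-completeness} according to Table~\ref{table:replace}, replacing ``cut-free {\sGL}-provable'' by ``$\in L$'', ``GL-model'' by ``${\cal F}$-model'', and Proposition~\ref{prop:trivial-GL} by the tail-soundness hypothesis. One small expositional slip in your $(3)\Rightarrow(1)$: you must saturate $\SEQ{\PS^\star,\Box\PS^\star}{\Box\PH^\star}$ to $\SEQ{\PI}{\SI}$ \emph{before} invoking characterization by ${\cal F}$ to obtain $t_0$, not after, since it is $\bigwedge\PI\IMP\bigvee\SI$ (not the unsaturated formula) whose falsity at $t_0$ drives the induction on subformulas in the tail.

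Where you genuinely diverge from the paper is in linking $(2)$ to the rest. The paper obtains $(2)\Leftrightarrow(3)\Leftrightarrow(4)$ by separately generalizing Theorem~\ref{th:cut-free-completeness-Db} (using an $\SL{L}$-saturation where the original used an {\sS}-saturation, and appealing to Theorem~\ref{th:generalization-S} in place of Theorem~\ref{th:cut-free-completeness-S}); you instead prove $(1)\Leftrightarrow(2)$ syntactically by transcribing Lemmas~\ref{lm:for-sDb-sDa-I} and \ref{lm:for-sDb-sDa-II}. Both work. Your route avoids redoing the $\DbL{L}$ countermodel construction, but you should note that the transcribed Lemma~\ref{lm:for-sDb-sDa-I} is a finite-deduction-theorem argument (any $\varphi\in\SL{L}=\MP{L\cup\{\Box\sigma\IMP\sigma\}}$ uses only finitely many reflection axioms, and closure of $L$ under tautological consequence then gives $\bigwedge{\rm ref}(\SI)\IMP\varphi\in L$), and that the $(\IMP{\rm L}^{-1})$ step in Lemma~\ref{lm:for-sDb-sDa-II} again only needs closure of $L$ under tautological consequence---both guaranteed by $L$ being characterized by a frame class. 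Two minor inaccuracies: your ``iff'' in the Lemma~\ref{lm:for-sDb-sDa-I} transcription is really only the forward direction (the converse is trivial but separate), and the inclusion $\DaL{L}\subseteq\DbL{L}$ uses reflection in $\SL{L}$, not ``{\HGL}-reasoning''.
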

\begin{proof}[Proof of Theorems~\ref{th:generalization-S} and \ref{th:generalization-D}]
Suppose that $L$ is characterized by ${\cal F}$.
If $X$ is $L$, $\SL{L}$, $\DaL{L}$, or $\DbL{L}$,
then 
$X$ contains all tautologies,
and $X$ is closed under tautological consequence;
therefore, $X$ can simulate {\bf LK} as below.
\begin{itemize}
\item
(Initial sequents) 
$\varphi \IMP \varphi, \varphi \IMP \BOT \in X$.
\item
(Inference rules)
$X$ is closed under the following rules. 
\[
\infer[\mbox{(weakening), where $\GA \subseteq \GA'$ and $\DE \subseteq \DE'$,}]{\bigwedge \GA' \IMP \bigvee \DE'}{\bigwedge \GA \IMP \bigvee \DE}
\]
\[
\infer[\mbox{($\IMP$left),}]{\bigwedge(\varphi \IMP \psi, \GA)\IMP \bigvee\DE}{
 \bigwedge\GA \IMP \bigvee(\DE, \varphi)
 &
 \bigwedge(\psi, \GA) \IMP \bigvee \DE
}
\quad
\infer[\mbox{($\IMP$right),}]{\bigwedge\GA \IMP \bigvee(\DE, \varphi \IMP \psi)}{
 \bigwedge(\varphi, \GA) \IMP \bigvee(\DE, \psi)
} 
\]
\[
\infer[\mbox{(cut).}]{\bigwedge(\GA,\PI)\IMP\bigvee(\DE,\SI)}{
  \bigwedge\GA \IMP \bigvee(\DE, \varphi)
  & 
  \bigwedge(\varphi, \PI) \IMP \bigvee\SI
}
\]
\end{itemize}
Using these,  
we generalize the arguments of Section~\ref{sec:cut-free-completeness}.
Roughly speaking, this is done 
by replacing terms 
according to Table~\ref{table:replace}.
In the following, we show a detailed proof of 
Theorem~\ref{th:generalization-S}.

First we redefine `\sS-saturation':
\begin{quote}
An S-sequent $\SSEQ{\PS^+}{\PH^+}$ is an
{\em \sS-saturation of $\SSEQ{\PS}{\PH}$}
$\quad\DEF\quad$
$\PS \subseteq \PS^+ \subseteq \SF{\PS,\PH}$; 
$\PH \subseteq \PH^+ \subseteq \SF{\PS,\PH}$; 
$\SSEQ{\PS^+}{\PH^+}$ is $\IMP$-saturated and $\Box$-left-saturated;
and 
$\bigwedge \PS^+ \IMP \bigvee\PH^+ \not\in\SL{L}$.
\end{quote}
Then we show the generalization of 
Lemma~\ref{lm:saturation}(2):
\begin{quote}
{\em 
If $\bigwedge \PS \IMP \bigvee \PH \not\in \SL{L}$,
then there is an \sS-saturation of $\SSEQ{\PS}{\PH}$.
}
\end{quote}
This is shown 
by simulating the proof of Lemma~\ref{lm:saturation}(2).
Note that the rule (\RuleLeftBox) is available in $\SL{L}$ as below
\[
\infer[(\RuleLeftBox)]
{\bigwedge(\Box\varphi, \GA) \IMP \bigvee\DE}{
  \bigwedge(\varphi, \GA) \IMP \bigvee\DE
}  
\]
because $\Box\varphi \IMP \varphi \in \SL{L}$.
Then we prove the generalization of Theorem~\ref{th:cut-free-completeness-S}:
\begin{quote}
{\em 
The following five conditions are equivalent.
\begin{itemize}
\item[{\rm (1)}]
$\bigwedge \PS \IMP \bigvee \PH \in \SL{L}$.

\item[{\rm (2)}]
$\bigwedge \PS \IMP \bigvee \PH$ is true 
at the bottom of any strongly constant $\lambda$-extension of any ${\cal F}$-model.

\item[{\rm (3)}]
$\bigwedge \PS \IMP \bigvee \PH$ is eventually always true in the tail of 
any strongly constant $\lambda$-extension of any ${\cal F}$-model.

\item[{\rm (4)}]
$\bigwedge \PS \IMP \bigvee \PH$ is eventually always true in the tail of 
any constant $\lambda$-extension of any ${\cal F}$-model.

\item[{\rm (5)}]
$\bigwedge \PS \IMP \bigvee \PH$ is eventually always true in the tail of 
any $\lambda$-extension of any ${\cal F}$-model.
\end{itemize}
}
\end{quote}
Note that $\varphi \in \SL{L}$ if and only if
$\bigwedge\emptyset \IMP \bigvee\{\varphi\} \in \SL{L}$;
therefore, showing the above equivalence
is sufficient for Theorem~\ref{th:generalization-S}.

(Proof of $(1)\Rightarrow (5)$)
We show that 
each element of $\SL{L}$ is eventually always true in the tail of 
any $\lambda$-extension of any ${\cal F}$-model.
This is easy because of 
the definition of $\SL{L}$, 
the assumption 
that $L$ is $\lambda$-tail-sound for ${\cal F}$,
and the fact that 
$\Box\varphi \IMP \varphi$
is eventually always true 
(this was shown in the proof of `$(1)\Rightarrow (6)$' of Theorem~\ref{th:cut-free-completeness-S}).

(Proof of $(5)\Rightarrow (4)\Rightarrow (3)$)
Trivial.

(Proof of $(3)\Rightarrow (2)$)
Shown in the proof of `$(4)\Rightarrow (3)$'
of Theorem~\ref{th:cut-free-completeness-S}.

(Proof of $(2)\Rightarrow (1)$)
We generalize the proof of `$(3)\Rightarrow (2)$'
of Theorem~\ref{th:cut-free-completeness-S}.
Suppose $\bigwedge \PS \IMP \bigvee \PH \not\in \SL{L}$.
By the `generalized Lemma~\ref{lm:saturation}(2)',
we get an \sS-saturation 
$\SSEQ{\PS^+}{\PH^+}$ of $\SSEQ{\PS}{\PH}$.
By the definition of \sS-saturation, 
we have 
$\bigwedge \PS^+ \IMP \bigvee\PH^+ \not\in\SL{L}$;
therefore
$\bigwedge \PS^+ \IMP \bigvee\PH^+ \not\in L$
because of the definition of $\SL{L}$.
Hence, there is an ${\cal F}$-model
$\Kmodel{W}{\R}{V}$ such that 
$\bigwedge \PS^+ \IMP \bigvee \PH^+$
is false at a world $t_0$
because of the assumption that $L$ is characterized by ${\cal F}$.
The rest of the proof is the same as in
the proof of `$(3)\Rightarrow (2)$'
of Theorem~\ref{th:cut-free-completeness-S}.

This completes the proof of Theorem~\ref{th:generalization-S}.
Similarly, 
Theorem~\ref{th:generalization-D} can be proved
by generalizing the proofs of
Theorems~\ref{th:cut-free-completeness-Da}
and 
\ref{th:cut-free-completeness-Db}.
\end{proof}

\begin{table}[h]
\caption{Rewriting for generalization}
\label{table:replace}
\begin{center}
\begin{tabular}{ll}
\hline
Terms in Section~\ref{sec:cut-free-completeness}
&
Replaced with
\\
\hline
GL-model & ${\cal F}$-model
\\
(cut-free) $\sGL \vdash \XSEQ{\GA}{\DE}$ & $\bigwedge \GA \IMP \bigvee \DE \in L $
\\
(cut-free) $\sS \vdash \XSEQ{\GA}{\DE}$ & $\bigwedge \GA \IMP \bigvee \DE \in \SL{L}$
\\
(analytic) $\sDa \vdash \XSEQ{\GA}{\DE}$ & $\bigwedge \GA \IMP \bigvee \DE \in \DaL{L}$
\\
(cut-free) $\sDb \vdash \XSEQ{\GA}{\DE}$ & $\bigwedge \GA \IMP \bigvee \DE \in \DbL{L}$
\\
Theorem~\ref{th:cut-free-completeness-GL}, 
Proposition~\ref{prop:trivial-GL}
&
assumption of the theorem 
\\
\hline
\end{tabular}
\end{center}
\end{table}

\begin{remark}
\textup{
In the proof of Theorem~\ref{th:Correspondence-D-Da},
we use the following particular properties of {\bf GL}:
$\Box\varphi \IMP \Box\Box\varphi \in {\bf GL}$, 
and 
${\bf GL} \subseteq
\DaL{\bf GL}$
(due to the property of the axioms of {\bf GL}
specified in the proof of Theorem~\ref{th:Correspondence-D-Da}).
Therefore, 
in general, 
the condition `$\varphi \in \DL{L}$' seems nonequivalent to
the conditions (1)-(4) of Theorem~\ref{th:generalization-D}.
}
\end{remark}

\medskip

In the rest of this section, 
we show an application of 
Theorem~\ref{th:generalization-D}
to the extensions of {\bf GL} and {\bf D} 
with linear order models.

Let $\lambda$ be a limit ordinal,
and $M$ be $\langle W, \R, V\rangle$.
We define `finite linear GL-model', `linear GL-model', and 
`$\lambda$-GL-model' as below.
\begin{quote}
$M$ is a {\em finite linear GL-model} 
$\DEF$
$\langle W, \R\rangle \simeq
\langle \{\alpha \mid \alpha \leq n\}, \ > \rangle$ 
for some natural number $n$.
(We call $\langle \{\alpha \mid \alpha \leq n\}, \ > \rangle$ 
a {\em finite linear GL-frame}.)

$M$ is a {\em linear GL-model} 
$\DEF$
$\langle W, \R\rangle \simeq
\langle \{\alpha \mid \alpha < \beta\}, \ > \rangle$ 
for some ordinal number $\beta > 0$.

$M$ is a {\em $\lambda$-GL-model} 
$\DEF$
$\langle W, \R\rangle \simeq
\langle \{\alpha \mid \alpha \leq \lambda\}, \ > \rangle$.
\end{quote}
Then we define logics 
{\GLLINF}, {\GLLIN}, $\DLINO{\lambda}$, and {\DLIN} as below.
\begin{quote}
${\GLLINF} =
\{\varphi \mid \mbox{$\varphi$ is true at any world of any 
finite linear GL-model.}\}$.

${\GLLIN} =
\{\varphi \mid \mbox{$\varphi$ is true at any world of any 
linear GL-model.}\}$.

$\DLINO{\lambda} = 
\{\varphi \mid \mbox{$\varphi$ is true at the bottom of 
any $\lambda$-GL-model.}\}$.

$\DLIN = 
\{\varphi \mid \mbox{$\varphi$ is true at the bottom of 
any $\lambda'$-GL-model, for any limit ordinal $\lambda'$.}\}$.
\end{quote}

\begin{theorem} 
\label{th:GLLIN}
${\GLLINF} = {\GLLIN}$.
(Therefore, {\GLLIN} is characterized by 
the class of finite linear GL-frames.)
\end{theorem}
\begin{proof}
{\GLLINF} is known to be axiomatized by 
adding an axiom scheme 
$\Box(\Box\varphi \IMP \psi) \OR \Box(\psi \AND \Box\psi \IMP \varphi)$
to {\HGL} (\S 25 in \cite{Gabbay_Investigations}).
We call this system {\HGLLIN}.
The soundness of {\HGLLIN} with respect to {\GLLIN} is easily shown.
Therefore ${\GLLINF} \subseteq {\GLLIN}$ is shown as below:
$
\varphi \in {\GLLINF} 
\Longrightarrow
{\HGLLIN}\vdash\varphi
\Longrightarrow
\varphi \in {\GLLIN}
$. 
The converse inclusion ${\GLLINF} \supseteq {\GLLIN}$ 
is trivial by the definition.
\end{proof}

\begin{theorem}
\label{th:application}
Let $\lambda$ be a limit ordinal.
The following conditions are equivalent.
\begin{itemize}
\item[{\rm (1)}]
$\varphi \in \DaL{\GLLIN}$.

\item[{\rm (2)}]
$\varphi \in \DbL{\GLLIN}$.

\item[{\rm (3)}]
$\varphi$ is true 
at the bottom of any $\lambda$-extension of any 
finite linear GL-model.

\item[{\rm (4)}]
$\varphi$ is true 
at the bottom of any $\lambda'$-extension of any 
finite linear GL-model,
for any limit ordinal $\lambda'$.

\item[{\rm (5)}]
$\varphi \in \DLINO{\lambda}$.

\item[{\rm (6)}]
$\varphi \in \DLIN$.

\end{itemize}

\end{theorem}

\begin{proof}
If $M$ is a $\lambda$-extension of a finite linear GL-model,
and if $M'$ is a submodel of $M$ as in Figure~\ref{Fig:GLLIN},
then $M'$ is a $\lambda$-GL-model
and the truth value of each formula at each world 
is inherited from $M$.
Using this, 
we can show the following three facts.
(i)
{\GLLIN} is $\lambda$-tail-sound for the class of
finite linear GL-frames.
(ii) 
The conditions (3) and (5) are equivalent.
(iii) 
The conditions (4) and (6) are equivalent.
Then, the fact (i) and 
Theorems~\ref{th:GLLIN} and \ref{th:generalization-D}
imply the equivalence between 
the conditions (1), (2), and (3).
Finally, 
`$(3) \Longleftrightarrow (4)$'
is shown similarly  to Remark~\ref{rem:bound-lambda}.
\end{proof}

\begin{figure}[h]

\begin{minipage}{6cm}
\begin{tikzpicture} [x=1mm,y=1mm]

\node at (0,0) {$\bullet$};
\draw [->,thick] (0,-3)--(0,-1);
\node at (0,-4) {$\bullet$};
\draw [->,thick] (0,-7)--(0,-5);
\node at (0,-8) {$\cdot$};
\node at (0,-9) {$\cdot$};
\node at (0,-10) {$\cdot$};
\draw [->,thick] (0,-13)--(0,-11);
\node at (0,-14) {$\bullet$};
\draw [->,thick] (0,-17)--(0,-15);
\node at (0,-18) {$\cdot$};
\node at (0,-19) {$\cdot$};
\node at (0,-20) {$\cdot$};
\draw [->,thick] (0,-23)--(0,-21);
\node at (0,-24) {$\bullet$};
\draw [->,thick] (5,-16) to [out=120, in=0] (1,-14);
\node at (5,-17) {$\bullet$};
\draw [->,thick] (5,-20)--(5,-18);
\node at (5,-21) {$\bullet$};
\draw [->,thick] (5,-24)--(5,-22);
\node at (5,-25) {$\cdot$};
\node at (5,-26) {$\cdot$};
\node at (5,-27) {$\cdot$};
\node at (5,-28) {$\cdot$};
\node at (5,-29) {$\cdot$};
\node at (5,-30) {$\cdot$};
\node at (5,-32) {$\bullet$};
\node [right] at (6,-32) {bottom};

\node at (2,-35) {$M$};

\node at (-50,0) {};
\end{tikzpicture} 
\end{minipage}
\begin{minipage}{6cm}
\begin{tikzpicture} [x=1mm,y=1mm]

\node at (0,0) {$\bullet$};
\draw [->,thick] (0,-3)--(0,-1);
\node at (0,-4) {$\bullet$};
\draw [->,thick] (0,-7)--(0,-5);
\node at (0,-8) {$\cdot$};
\node at (0,-9) {$\cdot$};
\node at (0,-10) {$\cdot$};
\draw [->,thick] (0,-13)--(0,-11);
\node at (0,-14) {$\bullet$};
\draw [->,thick] (5,-16) to [out=120, in=0] (1,-14);
\node at (5,-17) {$\bullet$};
\draw [->,thick] (5,-20)--(5,-18);
\node at (5,-21) {$\bullet$};
\draw [->,thick] (5,-24)--(5,-22);
\node at (5,-25) {$\cdot$};
\node at (5,-26) {$\cdot$};
\node at (5,-27) {$\cdot$};
\node at (5,-28) {$\cdot$};
\node at (5,-29) {$\cdot$};
\node at (5,-30) {$\cdot$};
\node at (5,-32) {$\bullet$};
\node [right] at (6,-32) {bottom};

\node at (2,-35) {$M'$};

\node at (-50,0) {};
\end{tikzpicture} 
\end{minipage}
\caption{A $\lambda$-extension of a finite linear GL-model
and its submodel.
}
\label{Fig:GLLIN}
\end{figure}

\begin{remark}
\textup{
Also the condition `$\varphi \in \DL{\GLLIN}$'
is equivalent to the conditions in 
Theorem~\ref{th:application}
because
we can show $\DaL{\GLLIN} = \DL{\GLLIN}$
in the same way as Theorem~\ref{th:Correspondence-D-Da},
using the Hilbert-style system {\HGLLIN} 
(mentioned in the proof of Theorem~\ref{th:GLLIN}).
A cut-free sequent calculus for {\GLLIN}
was  discussed in 
\cite{Valentini86}.
}
\end{remark}

\

\noindent
{\large {\bf Acknowledgements}}
\\
The authors would like to thank the referees for their 
valuable comments.
This work was supported by JSPS KAKENHI Grant Numbers JP20K03712
and JP23K03200.

\bibliographystyle {plain}
\bibliography{Myreference}

\end{document}